\DeclarePairedDelimiterX{\Iintv}[1]{\llbracket}{\rrbracket}{\iintvargs{#1}}
\NewDocumentCommand{\iintvargs}{>{\SplitArgument{1}{,}}m}
{\iintvargsaux#1} %
\NewDocumentCommand{\iintvargsaux}{mm} {#1\mkern1.5mu..\mkern1.5mu#2}
\newtheorem{lemma}{Lemma}
\newtheorem{definition}{Definition}
\newtheorem{corollary}{Corollary}
\newtheorem{theorem}{Theorem}
\newtheorem{proposition}{Proposition}
\newtheorem{remark}{Remark}
\def\real{{\mathord{{\rm I\kern-2.8pt R}}}}        % Fake blackboard bold R.
\def\inte{{\mathord{{\rm I\kern-2.8pt N}}}}
\def\sZZ{{\rm Z\kern-2.8ptem{}Z}}
\def\z{{\mathchoice
  {\sZZ}
  {\sZZ}
  {\rm Z\kern-0.30em{}Z}
  {\rm Z\kern-0.25em{}Z} }}
\def\sQQ{{\kern 0.27em \vrule height1.45ex width0.03em depth0em
          \kern-0.30em \rm Q}}
\def\qu{{\mathchoice
    {\sQQ}
    {\sQQ}
  {\kern 0.225em \vrule height1.05ex width0.025em depth0em \kern-0.25em \rm Q}
  {\kern 0.180em \vrule height0.78ex width0.020em depth0em \kern-0.20em \rm Q}
        }}
\def\sCC{{\kern 0.27em \vrule height1.45ex width0.03em depth0em
          \kern-0.30em \rm C}}
\def\complex{{\mathchoice
    {\sCC}
    {\sCC}
  {\kern 0.225em \vrule height1.05ex width0.025em depth0em \kern-0.25em \rm C}
  {\kern 0.180em \vrule height0.78ex width0.020em depth0em \kern-0.20em \rm C}
        }}
\newcommand{\ba}{\begin{array}}
\newcommand{\ea}{\end{array}}
\newcommand{\be}{\begin{equation}}
\newcommand{\ee}{\end{equation}}
\newcommand{\bea}{\begin{eqnarray}}
\newcommand{\eea}{\end{eqnarray}}
\newcommand{\beaa}{\begin{eqnarray*}}
\newcommand{\eeaa}{\end{eqnarray*}}
\def\g{\Lambda}
\def\z{\zeta}
\font\tenmath=msbm10 \font\sevenmath=msbm7 \font\fivemath=msbm5
\def \={{\buildrel {\rm (law)} \over =}}
\def\qed{ \hfill \vrule width.25cm height.25cm depth0cm\smallskip}
\newcommand{\basa}{\begin{assumption}}
\newcommand{\easa}{\end{assumption}}
\newcommand{\bas}{\begin{assum}}
\newcommand{\eas}{\end{assum}}
\newcommand{\E}{\mathbb{E}}
\newcommand{\ignore}[1]{}
\begin{document}

\renewcommand{\thefootnote}{\fnsymbol{footnote}}

\renewcommand{\thefootnote}{\fnsymbol{footnote}}

\title{Free Stein kernels, free moment maps, and higher order derivatives}

\author[1]{Charles-Philippe Diez \thanks{charlesphilippemanuelf.diez@cuhk.edu.hk}}
\affil[1]{Department of Statistics, The Chinese University of Hong-Kong.}
\renewcommand\Authands{ and }

\maketitle

\begin{abstract}
In this work, we describe new constructions of free Stein kernels. Firstly, in dimension one, we propose a free analog to the construction of Stein kernels using moment maps as the one proposed by Fathi in \cite{mm}. This will be possible for a class of measures called the "{\it free moment measures}" via the notion of {\it free moment map} (convex functions), introduced in the free case by Bahr and Boschert in \cite{BB}.
In a second time, we introduce the notion of higher-order free Stein kernels relative to a potential, which can be thought as the free counterpart of a recent and powerful idea introduced in the classical case by Fathi \cite{F}, and which generalize the notion of free Stein kernels by introducing higher-order derivatives of test functions (in our context noncommutative polynomials). We then focus our attention to the case of homothetic semicircular potentials. We prove as in the classical case, that their existence implies moments constraints. Finally, we relate these discrepancies to various metrics: the free (quadratic) Wasserstein distance, the relative free Fisher information along the Ornstein-Uhlenbeck flow or the relative non-microstates free entropy. Finally, as an important application, we provide new rates of convergences in the entropic {\it free CLT} under higher moments constraints.
\end{abstract}

\maketitle

\section{Introduction}
Stein's method, invented by Charles Stein in 1972 is a powerful tool to obtain bounds for distances between probability measures, and to obtain rate of convergence in the central limit theorem for various metrics. Numerous applications of Stein's method relies on its connection with various branchs of mathematics: exchangeable pairs, Malliavin calculus in gaussian or poissonian settings, or more generally by a Markov triple approach. Transportation cost inequalities and relations between quantities such as $L^p,p\geq 1$-Wasserstein distances and the entropy or relative Fisher information were established using various branches of Mathematics: PDE techniques (Otto and Villani \cite{VI}), semigroup approach or Gamma-calculus (see for example the monograph of Bakry, Gentil Ledoux \cite{BGL} to have a complete exposure of the theory).
Voiculescu in a series of breakthrough papers \cite{Vcg, Voic2, V} invented the concept of free probability and discovered an analog of free information theory: free Fisher information, microstates and non-microstates free entropy, non-commutative Hilbert transform\ldots, and has shown they behave as well as in the commutative case.
   Shlyakhtenko in \cite{SCLT} has proved that the free entropy (we remind that in dimension one both version coincides) is monotone along the free central limit theorem.
        Numerous functional inequalities, which holds in the commutative case have been proved to hold true in our context: {\it Free Stam inequality, Cramer-Rao bound, log-Sobolev inequality} and more recently the {\it free HSI inequality}. We will focus on the last one whose proof relies on the study of free Stein kernels. In fact, surprisingly and contrary to the classical case, the existence of free Stein kernels relative to a potential is always assured provided a moment condition relative to the cyclic derivative potential is fulfilled, see \cite{FCM} for precise statements. A deep connection with the free Malliavin calculus has been developed through the years, since the first striking contribution about "{\it fourth moments theorems on the Wigner space}" by Kemp, Nourdin, Peccati, Speicher in \cite{KNPS}, and which have been recently greatly improved by Bourguin and Campese in \cite{BC}, Cébron \cite{C}, and recently extended to the multi-dimensional setting by the author in \cite{Di}.
\bigbreak
Fathi in his important paper \cite{mm} discovered a new way to implement Stein kernel with respect to the standard Gaussian measure via ideas of optimal transportation. Indeed, there is a variant variant of the Monge-Kantorovitch problem where we only fix a target measure $\mu$ and we are looking for a convex, essentially continuous function $\phi$ called the "{\it moment map}" as well as its associated Gibbs measure $e^{-\phi}dx$ such as $\mu$ is the pushforward of $e^{-\phi}$ by the gradient of $\phi$, i.e $\mu=(\nabla \phi)_{\sharp} e^{-\phi}dx$. This formal construction can thus be seen as a bijection between convex functions and measures. Existence of such functions $\phi$ was was settled in the affirmative by Cordero-Erausquin and Klartag in \cite{CEK} for a broad class of probability measures $\mu$ called "{\it moment measures}" which have barycenter at the origin, are not supported on a hyperplane, and with finite first moment. Fathi realised that it was possible to construct Stein kernels with the help of these {\it moment maps}, and thus, this gives a new powerful tool to compare $\mu$ to the standard Gaussian measure $\gamma$. He was also able to generalize his construction of Stein kernel with respect to more general reference measures, i.e for $e^{-V}dx$, with $Hess\: V>0$, and was able to obtain new (and also sharp with respect to the dimension) estimates for the CLT in $L^p, p\geq 2$ Wasserstein distances for uniformly log-concave isotropic probability measures.
\bigbreak
    Recently Fathi, discovered by looking at the edgeworth expansions in the classical {\it CLT}, that we should expect better rate of convergence when the random variables have their moments of order $k\geq 2$ which match those of the standard gaussian. These edgeworth expansions carries naturally the integration-by-parts involving the higher-order Hermite polynomials (the Stein identity which correspond to the case $k=1$ for which the associated Hermite polynomial is $H_1=X$). Following this idea, Fathi in \cite{F} generalized (in two directions) the notion of Stein kernels: matrix valued measurable map relatively to the classical Gaussian potential, by introducing higher-order Stein kernels (symmetric tensors valued mapping) involving higher-order derivatives of test functions, and have provided their existence when the random vector of interest satisfies a {\it Poincaré inequality}. He also shown new important types of {\it HSI inequalities}, new {\it Wasserstein-Stein discrepancies} inequalities, new bounds for the $L^2$-Wasserstein distance in the central limit theorem, as well as for its entropic version. We also note that he was able to show that the {\it Zolotarev distances} to the standard gaussian measure (a class of distances which generalise the usual $1$-Wasserstein, and which are defined via duality over smooth functions with their $k$-order derivative bounded by $1$), are controlled by the relative Stein discrepancy (see section 2.2 in \cite{F}). It would also be of great interest to define and study these metrics in the free context (firstly in the scalar case, and only for now in the one dimensional case, as it is not clear for now how to introduce analogues of classical distances in the multi-dimensional setting, which are not defined via couplings. It would also be very interesting to investigate the operator-valued free probabilistic setting) as for now many quantitative estimates for the {\it free central limit theorem} are only given for the quadratic Wasserstein, or in terms of {\it Cauchy-Transform}. In the free context, a similar phenomenon was investigated by Chiastyakov and G\"otze in \cite{edge} which have provided deep results by using approximations of self-normalized sums of free random variables by free Meixner laws.
\bigbreak
The purpose of this article is to generalise the results of Fathi \cite{mm, F}, and Fathi and Nelson \cite{FN}, in a free probabilistic context. Note that even if the notion of free Stein kernel is firsly a probabilistic one, this work is also motivated by the recent advances of Charlesworth and Nelson in \cite{CN,CN2} which have introduced the notion of {\it free Stein dimension}, which is an invariant for finitely generated unital, tracial $*$-algebras, and which is deeply connected to the notions of free Fisher information, free entropy, and the free entropy dimension. This new quantity thus enlarges the tools of interest to prove structural properties of finitely generated tracial von Neumann algebras (absence of property $\Gamma$, non-$L^2$ rigidity, absence of Cartan subalgebras...).

\section{Definitions and notations}\label{sec1}
\bigbreak
Let's denote $\mathcal{M}$ a von-Neumann algebra equipped with $\tau$ a faithful normal tracial state. Let $\mathbb{P} =\mathbb{C}\langle t_1,\ldots,t_n \rangle=\mathbb{C}\langle T \rangle$ be the $*$-algebra of noncommutative polynomials in $n$ self-adjoints: $t_i=t_i^*$ non-commuting variables $T=(t_1,\ldots,t_n)$ and equipped with the involution $(PQ)^*=Q^*P^*$.

\bigbreak
\begin{definition}
A free Stein kernel for a $n$-tuple $X=(x_1,\ldots,x_n)\in (\mathcal{M},\tau)^n$ with respect to a potential $V\in \mathbb{P}^{(R)}$ is an element of $L^2(M_n(\mathcal{M}\otimes \mathcal{M}^{op}),(\tau \otimes \tau^{op}) \circ Tr))$ such that for any $P \in \mathbb{P} ^n$
\begin{equation}
    \langle [\mathcal{D}V](X),P(X)\rangle_{\tau}=\langle A,[\mathcal{J}P](X)\rangle_{\tau \otimes \tau^{op}},
\end{equation}
\end{definition}
The Stein discrepancy of $X$ relative to $V$ is then defined as 
\begin{equation}
\Sigma ^*(X|V)=\inf_{A}\lVert A-(1\otimes 1^{op})\otimes I_n\rVert_{L^2(M_n(\mathcal{M}\otimes \mathcal{M}^{op}),(\tau \otimes \tau^{op}) \circ Tr))},
\end{equation}
where the infinimum is taken over all admissible Stein kernel $A$ of $X$ relative to $V$.

\begin{flushleft}
Here $\mathcal{D}V$ is a cyclic gradient for which , and $\mathcal{J}P$ is the Jacobian matrix of $P=(P_1,\ldots,P_n)$ that we will define in the next pages.
\end{flushleft}
\begin{flushleft}
In dimension one, thanks to the the positivity of the state and the Riesz-Markov theorem, the {\it law or analytic distribution} of a self-adjoint noncommutative random variable $X\in \mathcal{M}_{s.a}$ (and thus bounded), is the unique Borel compactly supported probability measure $\mu_X$ such that:
\begin{eqnarray}
\tau(X^n)=\int x^nd\mu_X(x),
\end{eqnarray}
for all $n\geq 0$.
\end{flushleft}
\begin{flushleft}
    More generally we call the {\it law} of $X=(x_1,\ldots,x_n)\in \mathcal{M}_{s.a}^n$, the linear map $\mu_{x_1,\ldots,x_n}$ defined as:
    \begin{eqnarray}
    \mathbb{P} &\rightarrow& \mathbb{C}\nonumber\\
    P &\mapsto& \tau(P(x_1,\ldots,x_n)),
    \end{eqnarray}
\end{flushleft}
\begin{definition}
The centered semicircular distribution of variance $\sigma^2$ is the probability distribution :
\begin{equation}
    S(0,\sigma^2)(dx)=\frac{1}{2\pi\sigma^2}\sqrt{4\sigma^2-x^2}dx, \; \lvert x\rvert\leq 2\sigma,
\end{equation}
This distribution has all his odd moments which vanish by the symmetry of the distribution around $0$. Its even moments can be expressed by the help of the {\it Catalan numbers} through the following relation valid for all non-negative integers $m$ :
\begin{equation}
    \int_{-2\sigma}^{2\sigma}x^{2m}S(0,\sigma^2)(dx)=C_m\sigma^{2m},
\end{equation}
Where $C_m=\frac{1}{m+1}\binom{2m}{m}$ is the {$m$-th Catalan number}.
\end{definition}
  For a polynomial $p \in \mathbb{P}$ and a monomial $m$, we let $c_{m(p)}\in \mathbb{C}$ which denotes the coefficient of $m$ in $p$. And as in \cite{FN}, for each $R > 0$, we define the following norm:
\begin{equation}
    \lVert p\rVert_R=\sum_m c_m(p)R^{deg (m)},
\end{equation}
where the (finite) sum is running over all monomials appearing in "$p$". We can then take the completion of $\mathbb{P}$ with respect to this norm and we will denote the space as $\mathbb{P}^{(R)}$, which can be thought as the formal power series of radius of convergence at least "$R$".
\begin{flushleft}
For $V \in \mathbb{P}^{(R)}, R > \lVert X\rVert$, we say that the joint law of $X$ with respect to $\tau$ is a {\it free Gibbs state}  with potential $V$ if for each $j = 1,\ldots,n$ and each $p \in \mathbb{P}$ :
\begin{equation}
    \langle [\mathcal{D}_jV](X),p(X)\rangle_2 =\langle 1\otimes 1^{op},[\partial_jp](X)\rangle_{HS},
\end{equation}
That is, if the conjugate variables to $X$ are given by $[D_1V ](X),\ldots, [D_nV](X)$. Equivalently, the following equation holds for all $P \in \mathbb{P}^n$:
\begin{equation}
    \langle[\mathcal{D}V](X),P(X)\rangle_2 =\langle (1\otimes 1^{op})\otimes I_n,[\mathcal{J}P](X)\rangle_{HS},
\end{equation}
\end{flushleft}
\begin{flushleft}
As in the classical case, Free Gibbs laws are characterized by a minimization problem: they minimize the relative microstates variant of the entropy:
\end{flushleft}
\begin{definition}(\cite{GM,FN,VF})
The free Gibbs law $\tau_V$ associated to the potential $V$ if it exists, is the minimizer of the functional:
\begin{equation}
-\chi(\tau)+\tau(V),
\end{equation}
\end{definition}
In particular, and thanks to Voiculescu explicit computations, we recall that in the one dimensional case the free entropy is the logarithmic energy (up to an explicit constant). Thus, we are left to minimize the following functional.
\begin{definition}
The free Gibbs measure $\nu_u$ associated with the convex function $u:\mathbb{R}\rightarrow \mathbb{R}$ is the measure corresponding to the free Gibbs law $\tau_u$. That is the unique minimizer of the functional :
\begin{equation}
-\int\int \log\lvert t-s\rvert d\mu(t)d\mu(s)+\int u(s)d\mu(s),
\end{equation}
\end{definition}
\begin{remark}
Assume also that the free Gibbs measure $\nu_u$ exists (and might be possibly unique), e.g when $u$ is strictly convex on a sufficiently large interval, that is when it exits $\kappa>0$, such that $u''(x)\geq \kappa $ for all $x\in \mathbb{R}$ or that $u$ is bounded below, satsifies some growth condition and a Holder-continuous type criterion \cite{DmV}. The other case where existence and uniqueness is ensured is when we consider "small" analytic perturbation of the semicircular potential, i.e $\frac{1}{2}x^2+\beta W$ (see, e.g \cite{DmV}). The more powerful result (quartic potentials could be considered) about existence and uniqueness can be found in the breakthrough paper of Dabrowski, Guionnet and Shlyakhtenko \cite{YGS}. In a non-convex situation, the problem become much harder to understand. However, it is interesting to notice that in this setting Maida and Maurel-Segala \cite{maida} were able to prove free $T_1(C)$ (Wasserstein-$1$) transport inequalities.
\bigbreak
We recall that in dimension one, we have:
$\mathcal{J}f(x,y)=\frac{f(x)-f(y)}{x-y}$ and that the cyclic derivative coincide with the usual derivative, i.e $\mathcal{D}u(x)=u'(x)$.
\newline
    Then the previous free Stein equation for the potential $u$ for $\mu$ reformulated in its {\it "measure" (commutative) version} reduces to the existence of a function 
$A:\mathbb{R}^2\rightarrow \mathbb{R}$, such that for all nice test functions $f$:
\begin{equation}
\int u'(x)f(x)d\mu(x)=\int\int A(x,y)\frac{f(x)-f(y)}{x-y}d\mu(x)d\mu(y),
\end{equation}
\end{remark}

\begin{flushleft}

Recall by the GNS construction, $\tau$ defines an inner product on $\mathcal{M}$ by setting for all $x,y \in \mathcal{M}$
\begin{equation}
    \langle x,y\rangle_{\tau}=\tau (y^*x),
\end{equation}
\end{flushleft}
\begin{flushleft}
The completion of $\mathcal{M}$ with respect to the induced norm
$\lVert.\rVert_{\tau}$ is denoted $L^2(\mathcal{M},\tau)$. We will omit to denote the trace when its clearly defined and denote $\lVert.\rVert_{\tau}$  as $\lVert.\rVert_{2}$ and $L^2(\mathcal{M},\tau)$ as $L^2(\mathcal{M})$.
\end{flushleft}
From the von Neumann tensor product $\mathcal{M}\bar\otimes\mathcal{M}^{op}$ equipped with the faithful normal state $\tau\otimes \tau^{op}$, we can consider the Hilbert space $L^2(\mathcal{M}\bar\otimes \mathcal{M}^{op},\tau\otimes \tau^{op})$. This space can be identified with $HS(L^2(\mathcal{M}))$: the two-sided ideal of Hilbert–Schmidt operators on $L^2(M)$ via the isometric extension of the following map
\begin{equation}
    x\otimes y \mapsto \langle y,.\rangle_2 x,
\end{equation}
We also denote the inner product 
\begin{equation}
\langle a\otimes b, c\otimes d\rangle_{\tau\otimes \tau}=\tau\otimes\tau(c^*\otimes d^*\sharp (a\otimes b)):=\tau(c^*a)\tau(bd^*)
\end{equation}
\begin{flushleft}
For the $n$-tuple $X$, we define $\lVert X\rVert=\max_j \lVert x_j\rVert$. 
\newline
We will also write $C^*(X)$ and $W^*(X)$ for the $C^*$-algebra and von Neumann algebra generated by $x_1,\ldots, x_n$, respectively.
\newline
We will also denote $Var(X)$ as $\sum_{j=1}^n \tau(x_j^*x_j)$.
\newline
Given another $n$-tuple $Y = (y_1,\ldots,y_n) \in \mathcal{M}^n$, we write
$\langle X,Y\rangle_{\tau} =\sum_{j=1}^n\langle x_j,y_j\rangle_{\tau}$.
\end{flushleft}
\begin{definition}(Voiculescu \cite{Vcg})
Formally the cyclic derivative is defined on monomials $m\in \mathbb{P}$ as :
\begin{equation}
    \mathcal{D}m=(\mathcal{D}_1m,\ldots,\mathcal{D}_nm),
\end{equation}
where
\begin{equation}
\mathcal{D}_jm=\sum_{m=at_{j}b}ba,
\end{equation}
and then extended linearly to $\mathbb{P}$.
\end{definition}
\begin{definition}
(Voiculescu \cite{V})
The j-free difference quotient is defined as :
\begin{equation}
    \partial_jm=\sum_{m=at_jb}a\otimes b^{op},
\end{equation}
and then extended linearly to $\mathbb{P}$.

\end{definition}

\begin{definition}(Voiculescu \cite{V})
For $P=(p_1,\ldots,p_n)$, we define the Jacobian as :
\begin{equation*}
\mathcal{J}P = 
\begin{pmatrix}
\partial_1 p_1 & \partial_2 p_1 & \cdots & \partial_n p_n\\
\vdots  & \vdots  & \ddots & \vdots  \\
\partial_1 p_n & \partial_2 p_n & \cdots & \partial_n p_n
\end{pmatrix} \in M_n(\mathbb{P}\otimes \mathbb{P}^{op}),
\end{equation*}
\end{definition}
\begin{remark}
    We also recall that we will often write and view for a tuple of noncommutative random variables $X=(x_1,\ldots,x_n)\in \mathcal{M}^n$, both $\partial_X=(\partial_1,\ldots,\partial_n)$ and $\mathcal{J}_X$ viewed as acting respectively onto
    \newline
    $\partial_i:\mathbb{C}\langle x_1,\ldots,x_n\rangle\rightarrow L^2(\mathcal{M})\otimes L^2(\mathcal{M}^{op})\simeq HS(L^2(\mathcal{M}))$ and $\mathcal{J}_X: \mathbb{C}\langle x_1,\ldots,x_n\rangle\mapsto L^2(M_n(\mathcal{M})\otimes L^2(\mathcal{M}^{op}))$
   trough the canonical evaluation homomorphism: $(ev_{X}\otimes ev_{X})$ and $(ev_X\otimes ev_X)\otimes I_n$.

\end{remark}
We recall here various definitions which are the central point of interests and which constitutes the non-microstates counterpart of the free information theory. 
\begin{definition}
 We say that $X$ admits conjugate variables $\xi_{x_1},\ldots,\xi_{x_n}\in L^2(W^*(X))$ if the following relation holds true for every $p\in \mathbb{P}$ :
\begin{equation}
    \langle \xi_{x_j},p(x)\rangle_2=\langle 1\otimes 1^{op},[\partial_jp](X)\rangle_{HS},
\end{equation}
Moreover we define higher-order (mixed) conjugate variables, here of order $k\geq 1$ associated to $j_1,\ldots,j_k$, if it exists $\xi_{j_1\ldots,j_k}\in L^2(W^*(X))$:
\begin{equation}
    \langle \xi_{j_1\ldots,j_k}^k,p(x)\rangle_2=\langle 1^{\otimes (k+1)} ,[\partial_{j_1,\ldots,j_k} ^kp](X)\rangle_2,
\end{equation}
where $\partial_{(j_1,\ldots,j_k)} ^k$ denote the k-order free difference quotient with respect to $x_{j_1},\ldots,x_{j_k}$.
\bigbreak
The free Fisher information of X is defined as 
\begin{equation}
    \Phi^*(X):=\sum_{i=1}^n \lVert \xi_{x_j}\rVert_2 ^2,
\end{equation}
if conjugate variables for $X$ exists and $+\infty$ otherwise.
\end{definition}
\begin{remark}
In this way,
we have that $\Phi^*(X) <\infty $ if and if only $(1\otimes 1^{op})\otimes I_n$ belongs to $dom(\mathcal{J}_X^*)$ which is in this case viewed as the adjoint of $\mathcal{J}$ a densely defined closable operator whose acts on $B\in M_n(\mathcal{M}\otimes \mathcal{M}^{op})$ as :
\begin{equation}
    \mathcal{J}^*(B)=\left(\sum_{i=1}^{n}\partial_{i}^*(B_{i,j})\right)_{j=1}^{n},
\end{equation}
\end{remark}
\begin{definition}
 The free Fisher information of X relative to a potential $V \in \mathbb{P}^{(R)} $ for $R > \lVert X\rVert $ is the quantity :
 \begin{equation}
     \Phi^*(X|V)=\sum_{j=1}^n \lVert \xi_{x_j}-[\mathcal{D}_jV](X)\rVert_2 ^2,
 \end{equation}
 if such conjugate variables for $X$ exists and + $\infty$ otherwise.
\end{definition}
In the sequel, we will focus our attention to the particular case of a subclass of semicircular potentials (the ones with positive homothetic covariance matrices). We will denote and consider $V_\rho$ the following potential defined for all $\rho > 0$ by :
\begin{equation}
V_\rho=\frac{\rho}{2} \sum_{j=1}^n t_j^2,
\end{equation}
\begin{definition}
 Relative non-microstates free entropy. 
 Let $S = (s_1,\ldots , s_n)$ be a free $(0, 1)$-semicircular $n$-tuple,
free from $X=(x_1, \ldots, x_n)$. Then the non-microstates free entropy of $x_1,\ldots, x_n$ is defined in \cite{V}  to be the quantity 
\begin{equation}
    \chi^*(x_1,\ldots,x_n)=\frac{1}{2}\int_{0}^{\infty}\left(\frac{n}{1+t}-\Phi^*(X+ \sqrt{t}S)\right)dt+\frac{n}{2}log(2\pi e),
\end{equation}
which we also denote by $\chi^*(X)$.
If $X$ is a free $(0, \rho^{-1})$-semicircular family, then $X +\sqrt{t}S$
therefore $\tau^*(X + \sqrt{t}S) = n(\rho^{-1}+ t)^{-1}$. From this it is easy to compute $\chi^*(X) = \frac{n}{2}log(2\pi e\rho^{-1})$.
\end{definition} 
\bigbreak
\begin{definition}
For $X = (x_1,\ldots,x_n) \in \mathcal{M}^n$, let $V \in \mathbb{P}^{(R)}$ for $R \geq \lVert X \rVert$, we define the non-microstates
free entropy of $x_1,\ldots, x_n$ relative to $V$ to be the quantity
\begin{equation}
\chi^*(x_1,\ldots,x_n |V):=\tau(V(X))-\chi^*(x_1,\ldots,x_n),
\end{equation}
\end{definition}
which we will also denote by $\chi^*(X | V )$. In this context, we refer to $V$ as a potential.
Since $\chi^*(.)$ is maximized (for fixed variance) by an n-tuple of free semicircular operators, it is easy to see
that $\chi^*(.| V_\rho)$ is minimized by a free $(0, \rho^{-1})$-semicircular $n$-tuple.
\begin{definition}(Connes-Shlyakhtenko def $4.1.1$ in \cite{ConnesS})
The non-microstates free entropy dimension (the entropy variant) is defined for $X=(X_1,\ldots,X_n)$ self-adjoints and $S=(S_1,\ldots,S_n)$ a standard semicircular system supposed free from $X$ in some $W^*$-tracial probability space $(\mathcal{M},\tau)$:
\begin{equation}
    \delta^*(X)=n-\underset{\epsilon\rightarrow 0}\liminf \:\frac{\chi^*(X+\sqrt{\epsilon}S)}{\frac{1}{2}\log \epsilon},
\end{equation}

\end{definition}
We also recall that in dimension one, both microstates and non-microsates entropy agree in full generality (see e.g Voiculescu \cite{V}) 
The following fundamental theorem was proved by Shlyakthenko in \cite{SCLT} and provides an important property: the monotonicity of the free entropy along the "{\it Central Limit Theorem}" which means the following: 

\begin{theorem}(Shlyakthenko in \cite{SCLT})
    For any sequence of self-adjoint, freely independent and identically distributed random variables $(y_i)_{i\in \mathbb{N}^*}$, then:
    \begin{equation}
    n\mapsto \chi\left(\frac{y_1+\ldots,y_n}{\sqrt{n}}\right),
\end{equation}
is monotone increasing for all $n\geq 1$.
\end{theorem}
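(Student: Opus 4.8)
The plan is to derive the monotonicity statement from the additivity of the free Fisher information along free convolution together with the Stam-type inequality, feeding these into the integral formula for the non-microstates free entropy $\chi^*$. Since we are in dimension one, I may freely identify $\chi^*$ with $\chi$, so it suffices to prove the claim for $\chi^*$. Write $Z_n = (y_1 + \cdots + y_n)/\sqrt n$, and let $S$ be a standard semicircular element free from all the $y_i$. Using the integral representation
\begin{equation}
\chi^*(Z_n) = \frac12 \int_0^\infty \left( \frac{1}{1+t} - \Phi^*(Z_n + \sqrt t\, S) \right) dt + \frac12 \log(2\pi e),
\end{equation}
the inequality $\chi^*(Z_{n+1}) \ge \chi^*(Z_n)$ is equivalent to $\Phi^*(Z_{n+1} + \sqrt t\, S) \le \Phi^*(Z_n + \sqrt t\, S)$ for (almost) every $t > 0$; so everything reduces to a single monotonicity statement for the free Fisher information of the noised sums.

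Next I would exploit the scaling and additivity properties of $\Phi^*$. First, $Z_{n+1}$ has the same law as $\sqrt{\tfrac{n}{n+1}}\, Z_n' + \sqrt{\tfrac{1}{n+1}}\, y_{n+1}'$, where $Z_n'$ is a copy of $Z_n$ built from $y_1,\dots,y_n$ and $y_{n+1}'$ is free from it with the common distribution of the $y_i$; more generally, writing $a = \sqrt{n/(n+1)}$ and $b = \sqrt{1/(n+1)}$ (so $a^2 + b^2 = 1$), one has $Z_{n+1} + \sqrt t\,S \stackrel{d}{=} a(Z_n + \sqrt{t'}S_1) + b(y + \sqrt{t''}S_2)$ for a suitable splitting of the semicircular noise with $a^2 t' + b^2 t'' = t$, by freeness and the semicircular reproducing property. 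Then I invoke the two standard facts about the free Fisher information under free convolution: the scaling identity $\Phi^*(cW) = c^{-2}\Phi^*(W)$, and the superadditivity-of-inverse (free Stam) inequality $\Phi^*(U + V)^{-1} \ge \Phi^*(U)^{-1} + \Phi^*(V)^{-1}$ for $U,V$ free, equivalently the convexity-type estimate $\Phi^*(aU + bV) \le a^2 \Phi^*(U) + b^2 \Phi^*(V)$ when $a^2 + b^2 = 1$ (this is Voiculescu's free analogue of the classical Fisher-information convolution inequality). Applying this with $U = Z_n + \sqrt{t'}S_1$ and $V = y + \sqrt{t''}S_2$ gives $\Phi^*(Z_{n+1} + \sqrt t\, S) \le a^2 \Phi^*(Z_n + \sqrt{t'}S_1) + b^2 \Phi^*(y + \sqrt{t''}S_2)$.

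The remaining work is to turn this one-step convexity bound into the clean statement $\Phi^*(Z_{n+1} + \sqrt t\, S) \le \Phi^*(Z_n + \sqrt t\, S)$, integrate in $t$, and recover the $\chi^*$ inequality. The cleanest route is to choose the noise splitting so that both $Z_n + \sqrt{t'}S_1$ and $y + \sqrt{t''}S_2$ carry exactly the right amount of added semicircular so that, after rescaling by $a$ and $b$, the comparison becomes an equality of the "effective heat-flow time"; concretely one uses that along the free Ornstein-Uhlenbeck / free heat semigroup the quantity $t \mapsto \Phi^*(W + \sqrt t\, S)$ is decreasing and that $\Phi^*$ of a rescaled convolution with matching variance can only decrease. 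I expect the main obstacle to be bookkeeping the variances and the added-noise times so that the elementary inequalities line up into a genuine monotonicity in $n$ uniformly in $t$ — in particular verifying that the "cross" terms coming from freeness do not spoil the comparison — rather than any single deep input; the deep inputs (the integral formula for $\chi^*$, the scaling law, and the free Stam inequality) are all available from Voiculescu's work cited above. An alternative, essentially equivalent, packaging is to differentiate $\chi^*(Z_n)$ in a continuous interpolation parameter and identify the derivative with a relative free Fisher information, which is manifestly nonnegative; but the discrete free-convolution argument above is the most direct.
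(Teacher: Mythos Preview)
The paper does not prove this theorem: it is quoted as a result of Shlyakhtenko \cite{SCLT} and used only as background. There is therefore no ``paper's proof'' to compare against, but your proposal still deserves comment because it contains a genuine gap.

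Your reduction to showing $\Phi^*(Z_{n+1}+\sqrt t\,S)\le \Phi^*(Z_n+\sqrt t\,S)$ for every $t>0$ is fine, and the convexity bound $\Phi^*(aU+bV)\le a^2\Phi^*(U)+b^2\Phi^*(V)$ (for $U,V$ free and $a^2+b^2=1$) does follow from the free Stam inequality plus scaling. The problem is the paragraph you label ``remaining work'' and dismiss as bookkeeping. Writing $Z_{n+1}=aZ_n+by_{n+1}$ with $a^2=n/(n+1)$, $b^2=1/(n+1)$ and taking the natural noise splitting $t'=t''=t$, Stam gives
\[
\Phi^*(Z_{n+1}+\sqrt t\,S)\;\le\;\frac{n}{n+1}\,\Phi^*(Z_n+\sqrt t\,S)\;+\;\frac{1}{n+1}\,\Phi^*(y_1+\sqrt t\,S).
\]
But the very same inequality, applied repeatedly, shows $\Phi^*(Z_n+\sqrt t\,S)\le \Phi^*(y_1+\sqrt t\,S)$, so the right-hand side above is \emph{at least} $\Phi^*(Z_n+\sqrt t\,S)$, not at most. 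No clever choice of $t',t''$ fixes this: you are trying to bound a convex combination of two numbers by the smaller of the two. This obstruction is exactly why the monotonicity question (Shannon's problem in the classical case) stayed open for decades; Stam's inequality yields monotonicity along the subsequence $n=2^k$ but not from $n$ to $n+1$.

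The argument in \cite{SCLT}, adapting Artstein--Ball--Barthe--Naor, uses the full permutation symmetry of $y_1,\dots,y_{n+1}$ rather than singling out one summand. One forms the $n+1$ leave-one-out sums $W_j=\frac{1}{\sqrt n}\sum_{i\ne j}y_i$, each distributed as $Z_n$, expresses the conjugate variable of $Z_{n+1}$ as a conditional expectation of a symmetric combination of the conjugate variables of the $W_j$, and concludes by convexity of the $L^2$-norm. That projection/averaging step is a genuinely different mechanism from Stam and is the missing idea in your proposal.
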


\begin{flushleft}
\textbf{We recall to the reader that we will only work in the tracial case in the sequel.}
\end{flushleft}
\section{Free Stein kernels and free moments maps}
In this section focused only in the one-dimensional (commutative) case where we have a proper notion of the analytic distribution $\mu_X$ of a noncommutative random variable $X\in (\mathcal{M},\tau)$ (we will only focus on the "measure" version and deal with compactly supported measure instead of noncommutative laws), we will investigate the construction of free Stein kernels for compactly supported measure thanks to a notion of {\it free moment maps} developed in the free setting by Bahr and Boschert in \cite{BB}, and which corresponds to a free analog of a breakthrough result in the classical case by Cordero-Erausquin and Klartag in \cite{CEK}, and which gives a broad class of probability measure that are the pushforward of a convex function $\phi$ with respect to the law of density $e^{-\phi}$. Note that this is a variant of the optimal transportation problem where here we only fix a target measures and we are looking for a original measure as well as its transport map which turns out to be a gradient of the potential of the original Gibbs measure $e^{-\phi}dx$. 
\begin{flushleft}
In the sequel, we recall that for $\phi:\mathbb{R}^n\rightarrow \mathbb{R}\cup\left\{+\infty\right\}$ a convex function, we denote its Legendre transform:
\begin{equation}
\phi^*(y)=\underset{x\in \mathbb{R}^n}{\sup}\left\{x\cdot y-\phi(x)\right\},
\end{equation}
And which satisfies the following property:
\begin{enumerate}
\item if $\phi$ is $\mathcal{C}^2$, $\phi^*$ is also $\mathcal{C}^2$.
\item $\nabla \phi^*$ is the inverse of $\nabla \phi$, i.e: $\nabla \phi^*=(\nabla \phi)^{-1}$.
\end{enumerate}
\end{flushleft}
\begin{theorem} (Cordero-Erausquin and Klartag in \cite{CEK}) 
Let $\mu$ be a centered measure on $\mathbb{R}^n$, with finite first moment and not supported on an hyperplane. Then there exists a convex function $\phi$, such that $\mu$ is the pushforward of the centered probability measure with density $e^{-\phi}$, by the map $\nabla \phi$. The function $\phi$ is called the moment map of $\mu$.
\end{theorem}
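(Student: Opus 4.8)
The plan is to realize $\phi$ as the solution of a variational problem dual to the one defining moment measures. First I would introduce, for a convex lower-semicontinuous function $\psi:\mathbb{R}^n\to\mathbb{R}\cup\{+\infty\}$ that is ``essentially continuous'' (its domain has full-measure interior and $\psi$ blows up at the boundary in an appropriate sense), the functional
\begin{equation}
\mathcal{I}(\psi)=\log\!\int_{\mathbb{R}^n}e^{-\psi(x)}\,dx+\int_{\mathbb{R}^n}\psi^*\,d\mu .
\end{equation}
The strategy is: (i) show $\mathcal{I}$ is bounded below and attains its infimum on the class of centered essentially continuous convex functions, using the hypotheses that $\mu$ is centered, has finite first moment (so $\int\psi^*\,d\mu$ is finite for at least the linear functions, controlling the infimum), and is not supported on a hyperplane (which rules out degeneracies that would send $\mathcal{I}$ to $-\infty$ or destroy coercivity); (ii) derive the Euler–Lagrange equation for a minimizer $\phi$ by computing the first variation along perturbations $\phi+t h$; (iii) read off from the Euler–Lagrange equation that, after normalizing so that $\int e^{-\phi}=1$, one has $(\nabla\phi)_{\sharp}\bigl(e^{-\phi}\,dx\bigr)=\mu$.

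For step (i), the key analytic inputs are the Santaló inequality (or the Blaschke–Santaló-type bound) to get an \emph{a priori} upper bound on $\int e^{-\psi}$ in terms of the barycenter and a lower bound on the mass, together with lower-semicontinuity of both terms of $\mathcal{I}$ under an appropriate topology (e.g.\ local uniform convergence of convex functions, equivalently pointwise convergence on the interior of the domain). Since adding an affine function $x\mapsto\langle a,x\rangle+b$ to $\psi$ changes $\psi^*$ by a translation of argument and a constant, and since $\mu$ is centered, one checks $\mathcal{I}$ is invariant (or transforms in a controlled way) under such shifts, which is what lets one reduce to centered $\psi$ and extract a compactness statement. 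The non-degeneracy hypothesis (not supported on a hyperplane) is exactly what forbids the minimizing sequence from ``escaping to infinity in one direction,'' i.e.\ it gives the missing coercivity in the last free direction.

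For step (ii)–(iii), I would take a smooth compactly supported (inside the interior of $\mathrm{dom}\,\phi^*$) perturbation $h$ and differentiate: $\frac{d}{dt}\big|_{t=0}\log\int e^{-(\phi^*+th)^*}$ pulls back, via the Legendre duality $\nabla(\phi^*)^{-1}=\nabla\phi$ and the change of variables $y=\nabla\phi(x)$, to an integral of $h(\nabla\phi(x))$ against $e^{-\phi(x)}\,dx$, while $\frac{d}{dt}\big|_{t=0}\int(\phi^*+th)\,d\mu=\int h\,d\mu$; setting the sum to zero for all such $h$ yields $(\nabla\phi)_{\sharp}(e^{-\phi}dx)=Z\mu$ with $Z=\int e^{-\phi}dx$, and the homogeneity of the problem lets one normalize $Z=1$. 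The essential continuity of $\phi^*$ is needed to justify that the change of variables $y=\nabla\phi(x)$ is a genuine (a.e.) bijection with no loss of mass at the boundary.

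The main obstacle I anticipate is step (i): proving that the infimum of $\mathcal{I}$ is actually attained in the class of essentially continuous convex functions, rather than in some larger space where the Euler–Lagrange computation breaks down. This requires care in choosing the function-space topology, a quantitative use of the Santaló inequality to prevent the minimizing sequence from degenerating, and a non-trivial argument — using precisely that $\mu$ is not carried by a hyperplane — that the limiting convex function is not affine on a ray (which would make $e^{-\phi}$ non-integrable). A secondary technical point is the regularity needed to differentiate under the integral sign and perform the change of variables; one would either work with the Legendre dual formulation throughout (where the relevant monotone map is $\nabla\phi^*$) or invoke Alexandrov's theorem to make sense of $\nabla\phi$ and its Jacobian $\mu$-almost everywhere.
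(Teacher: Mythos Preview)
The paper does not contain a proof of this statement: it is quoted as an external result of Cordero--Erausquin and Klartag \cite{CEK}, stated for context before the free analogue due to Bahr and Boschert is introduced, so there is no ``paper's own proof'' to compare against. That said, your outline is essentially the variational strategy of the original \cite{CEK} paper --- minimizing the functional $\psi\mapsto \log\int e^{-\psi} + \int \psi^*\,d\mu$ over essentially continuous convex functions, using centering and the not-on-a-hyperplane hypothesis for coercivity, and reading off the pushforward relation from the Euler--Lagrange equation --- and the obstacles you flag (attainment of the infimum in the right class, boundary behaviour, justification of the change of variables) are precisely the ones handled there.
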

The function $\phi$ might not be smooth in full generality, especially when the moment laws is a combination of Dirac masses. This fact was already noticed by Fathi (see remark preceding theorem 2.2 in \cite{mm}), and Bahr and Boschert (see e.g 2.5.2 in \cite{BB}). Indeed consider the uniform measure on $\left\{-1,1\right\}$ viewed as a subset of $\mathbb{R}$, that is $\mu=\frac{1}{2}\delta_{-1}+\frac{1}{2}\delta_1$, for which we can compute $\phi(x)=\frac{1}{2}\lvert x\rvert$ which is not smooth at the origin.

\begin{flushleft}
Bahr, Boschert have been able to prove a free analog of this construction, which is of course better adapted in the one dimensional (commutative) case. Note also that the condition that the centered measure $\mu$ is not supported on an hyperplane is reduced in this one dimensional case to the condition: $\mu\neq \delta_0$.
\end{flushleft}
\begin{theorem}(Bahr, Boschert Theorem 2.5 in \cite{BB})
Let $\mu\neq \delta_0$, a centered probability measure  in $\mathcal{P}_2(\mathbb{R})$, then for some convex function $u$, $(u')_{\sharp}\nu_{u}=\mu$, i.e $\mu$ is the pushforward of the free Gibbs measure $\nu_u$ by the function $u'$. The convex function $u$ is called the {\it free moment map} of $\mu$.
\end{theorem}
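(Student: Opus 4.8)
The plan is to carry over to the free setting the variational construction of Cordero-Erausquin–Klartag (the Theorem stated above), replacing the Boltzmann entropy by the logarithmic energy, which in dimension one is --- up to an additive constant --- the free entropy $\chi$ recalled in Section~\ref{sec1}. For a convex lower semicontinuous $W:\mathbb{R}\to\mathbb{R}\cup\{+\infty\}$ with enough growth on both sides for the associated equilibrium (free Gibbs) measure to be compactly supported, write
\[
\mathcal{K}(W)=\min_{\lambda\in\mathcal{P}(\mathbb{R})}\Big\{-\iint\log|s-t|\,d\lambda(s)\,d\lambda(t)+\int W\,d\lambda\Big\},
\]
so that, by the variational characterization of free Gibbs measures recalled above, the minimum is attained exactly at $\lambda=\nu_W$. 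Then, on the class of such functions, consider
\[
\Psi_\mu(v)=\mathcal{K}(v^{*})+\int v\,d\mu,
\]
which is invariant under $v\mapsto v+c$ (since $(v+c)^{*}=v^{*}-c$ and $\mathcal{K}(W-c)=\mathcal{K}(W)-c$), so one normalizes by $\min v=0$. The claim is that a minimizer $\bar v$ of $\Psi_\mu$ exists and that $u:=\bar v^{*}$ is the free moment map of $\mu$.

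First I would show the infimum of $\Psi_\mu$ is attained. For lower semicontinuity: if convex functions $v_k$ converge locally uniformly to $v$, then $v_k^{*}\to v^{*}$ locally uniformly, the equilibrium measures $\nu_{v_k^{*}}$ converge weakly to $\nu_{v^{*}}$ and $\mathcal{K}(v_k^{*})\to\mathcal{K}(v^{*})$ (weak continuity/lower semicontinuity of the logarithmic energy with external field), while $\int v_k\,d\mu$ passes to the limit up to Fatou; this gives $\liminf_k\Psi_\mu(v_k)\ge\Psi_\mu(v)$. Coercivity is where the hypotheses on $\mu$ enter: centredness of $\mu$ is what lets $\int v\,d\mu$ control the affine part of $v$ and thus prevents a minimizing sequence from escaping to infinity under translation, while $\mu\ne\delta_0$ rules out the degenerate regime where $\nu_{v^{*}}$ would have to collapse to a point mass (along which $-\iint\log|s-t|$, hence $\mathcal{K}(v^{*})$, blows up to $+\infty$) and, more generally, guarantees that no convex $u$ is a priori excluded; the finite second moment keeps all integrals finite and forces $\bar v$ to have enough growth for $\nu_{\bar v^{*}}$ to remain a bona fide compactly supported equilibrium measure. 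I expect this compactness step to be the main obstacle: in the classical case it rests on Prékopa–Leindler/Borell-type inequalities, and here those must be replaced by quantitative properties of the logarithmic energy / free entropy (weak continuity, a priori support bounds for $\nu_W$ in terms of $W$, the explicit one-dimensional formula for $\chi$), together with a careful enumeration of the ways a minimizing sequence can degenerate.

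Granting a minimizer $\bar v$, the Euler–Lagrange equation follows from two applications of the envelope (Danskin) theorem. For an admissible perturbation $\eta$, at the optimal measure $\nu_{\bar v^{*}}$ only the explicit dependence of $\mathcal{K}$ on the potential survives, so $\frac{d}{dt}\big|_{0}\mathcal{K}\big((\bar v+t\eta)^{*}\big)=\int\frac{d}{dt}\big|_{0}(\bar v+t\eta)^{*}\,d\nu_{\bar v^{*}}$, and since $(\bar v^{*})'$ is the argmax in the Legendre supremum, $\frac{d}{dt}\big|_{0}(\bar v+t\eta)^{*}=-\,\eta\circ(\bar v^{*})'$. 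Hence
\[
\tfrac{d}{dt}\big|_{0}\Psi_\mu(\bar v+t\eta)=\int\eta\,d\mu-\int\eta\,d\big[((\bar v^{*})')_{\sharp}\nu_{\bar v^{*}}\big],
\]
and letting $\eta$ range over perturbations in both directions near points where $\bar v$ is finite gives $\mu=((\bar v^{*})')_{\sharp}\,\nu_{\bar v^{*}}$. Setting $u:=\bar v^{*}$, which is convex as a Legendre transform, and noting $\nu_{\bar v^{*}}=\nu_u$ is by definition the free Gibbs measure of $u$, we conclude $(u')_{\sharp}\nu_u=\mu$. As with the classical theorem, $u'$ should be read as the monotone transport map furnished by the ($\nu_u$-a.e.\ single-valued) subdifferential of $u$; since the equilibrium measure $\nu_u$ is absolutely continuous on an interval, this map is defined $\nu_u$-a.e.\ and the pushforward makes sense.

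Finally, as a safeguard against the compactness difficulty, one could instead first prove the statement for smooth uniformly convex perturbations of a semicircular potential (where existence and uniqueness of $\nu_u$ is available, cf.\ Section~\ref{sec1}) and for finitely atomic $\mu$ (where the equilibrium measure is more explicit), then treat the general case by approximating $\mu$ weakly in $\mathcal{P}_2(\mathbb{R})$ and invoking stability of the construction; but the direct variational argument above is the cleaner route and is the faithful free analog of \cite{CEK}.
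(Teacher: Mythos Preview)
The paper under review does not contain a proof of this theorem: it is quoted verbatim as ``Bahr, Boschert Theorem~2.5 in \cite{BB}'' and used as a black box for the subsequent construction of free Stein kernels via free moment maps. There is therefore nothing in the present paper to compare your proposal against.

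That said, your outline is precisely the free analog of the Cordero--Erausquin--Klartag variational scheme, and this is indeed the route taken in \cite{BB}: one minimizes $\Psi_\mu(v)=\mathcal{K}(v^{*})+\int v\,d\mu$ with $\mathcal{K}$ the weighted logarithmic energy, and the Euler--Lagrange condition via the envelope/Danskin argument yields $(u')_{\sharp}\nu_u=\mu$ for $u=\bar v^{*}$. Your identification of the compactness/coercivity step as the crux is accurate; in the free case the substitutes for the Pr\'ekopa--Leindler machinery are the known regularity and support properties of equilibrium measures for the logarithmic potential (a priori bounds on $\mathrm{supp}\,\nu_W$ in terms of the growth of $W$, weak continuity of $W\mapsto\nu_W$, and the explicit one-dimensional free entropy formula), and these are what \cite{BB} invokes. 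Your sketch is a faithful summary of that argument, but note that as written it is only a programme: the coercivity paragraph does not actually carry out the degeneration analysis, and the Danskin step tacitly assumes enough regularity of $\bar v$ and of $t\mapsto\nu_{(\bar v+t\eta)^{*}}$ to differentiate under the minimum, which in \cite{BB} requires separate justification.
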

\begin{flushleft}
This theorem gives a broad class of probability measures which are the pushforward of free Gibss measure $\nu_u$ by the function $u'$, and in particular it provides a (complete, with only a little restriction) analog of the result of Cordero-Erausquin and Klartag in the classical case, but only in dimension one.
\end{flushleft}
More interestingly, and in the classical case, Fathi in his paper \cite{mm} has discovered a very interesting connection between the {\it moment map} $\phi$ and the notion of Stein kernel. Precisely, he remarked that the map:
\begin{equation}
\mu \mapsto e^{-\phi}dx,
\end{equation}
where (here $dx$ standard for the Lebesgue measure on $\mathbb{R}^n$ up to a renormalization constant) has a unique fixed point given by the standard Gaussian measure, for which we recall that the moment map is given by $\frac{\lVert x\rVert_2^2}{2}$. Thus, the moment map already contains a lot of information on how $\mu$ is close to the standard Gaussian measure $\gamma$.
\bigbreak
In particular, and heuristically, if $\phi(x)\approx \frac{\lVert x\rVert_2^2}{2}$, then we can expect that $\mu \approx \gamma$. Fathi formalize this argument, and has shown that one can construct free Stein kernel thanks to this moment map. It is then easy to bound the quadratic Wasserstein distance to the standard gaussian thanks to the {\it Wasserstein-Stein discrepancy} inequality of Ledoux, Nourdin and Peccati \cite{LNP}. Fathi also constructed Stein kernel with respect to possibly non-Gaussian reference measure, i.e: for classical Gibbs state $e^{-V}dx$, associated with a potential $V$ convex, $\mathcal{C}^2$ and $Hess V>0$, via the construction of a Stein kernel with respect to the standard gaussian for the pushforward $\mu_V$ of $\mu$ by $\nabla V$ (see, e.g, section 5 in \cite{mm}), and gives in this way a powerful tool to compare $\mu$ and a large class of references measures: the strictly log-concave measures associated with a smooth convex potential $V$, which could be far from $\gamma$.
\begin{flushleft}
This implies in particular from the convexity of $\phi$ that $\nabla \phi$ is the optimal transportation map between $e^{-\phi}dx$ and $\mu$, and in particular that the moment map $\phi$ is basically the solution of a variant to the following variant of the Monge-Ampere equation called also the {\it Kähler-Einstein} equation (as it has numerous application in Kahler geometry, and in particular in the study of differential complex and symplectic structures on toric Fano manifolds, see e.g. Donaldson \cite{DON} for a nice exposure).
\begin{equation}\label{mongA}
e^{-\phi}=\rho(\nabla \phi)det(\nabla^2 \phi)
\end{equation}
where $\rho$ is the (positive) density w.r.t the Lebesgue measure of $\mu$ on its support which is supposed to be a compact, open convex set. 
\newpage
The previous remark stating that the the standard gaussian is the unique fixed point of the previously defined mapping is thus easily deduced from the following remark:
\newline
Indeed, if we fix $\rho$ as the gaussian density, then the unique solution to the Monge-Ampère PDE \eqref{mongA} is given by $\frac{\lVert x\rVert^2_2}{2}$.
\begin{theorem}(Fathi, Theorem 2.3 in \cite{mm})\label{3.3}
Let's suppose that $\mu$ has a density $\rho$ w.r.t the Lebesgue measure which is strictly positive on its support, and that the solution $\phi$ to the PDE \eqref{mongA} is supported on the whole space $\mathbb{R}^d$. Then $Hess\:\phi(\nabla \phi^*)=(Hess\:\phi^*)^{-1}$ is a free Stein kernel for $\mu$ with respect to the standard Gaussian measure.
\end{theorem}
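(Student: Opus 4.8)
The plan is to verify the defining integration-by-parts identity for a Stein kernel directly, using only the change of variables $x=\nabla\phi(y)$ furnished by the moment map together with a single integration by parts against the density $e^{-\phi}$. Concretely, writing $A(x):=\mathrm{Hess}\,\phi(\nabla\phi^*(x))$, the goal is to show that for every sufficiently regular vector field $f:\mathbb{R}^d\to\mathbb{R}^d$ (say $C^1$, compactly supported, or polynomially bounded together with its derivative),
\begin{equation*}
\int \langle x, f(x)\rangle\,d\mu(x) = \int \langle A(x), \nabla f(x)\rangle_{HS}\,d\mu(x),
\end{equation*}
which is exactly the assertion that $A$ is a Stein kernel for $\mu$ with respect to the standard Gaussian measure $\gamma$.

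First I would unpack the hypothesis. The PDE \eqref{mongA}, $e^{-\phi}=\rho(\nabla\phi)\det(\nabla^2\phi)$, is nothing but the Jacobian form of the pushforward relation $\mu=(\nabla\phi)_{\sharp}(e^{-\phi}\,dx)$: since $\phi$ is convex and $C^2$, the map $\nabla\phi$ is injective with inverse $\nabla\phi^*=(\nabla\phi)^{-1}$ (which is $C^2$ by the Legendre-duality property recalled above), so the change-of-variables formula gives, for every $\mu$-integrable $g$,
\begin{equation*}
\int g\,d\mu = \int g(\nabla\phi(y))\,e^{-\phi(y)}\,dy.
\end{equation*}
The hypothesis that the solution $\phi$ is supported on all of $\mathbb{R}^d$ guarantees that $e^{-\phi}$ is a genuine full-support density (equivalently, $\phi\to+\infty$ at infinity fast enough), which is precisely what makes the integration by parts below legitimate with no boundary contribution; the $C^2$-regularity of $\phi$, hence of $\phi^*$, is what makes $\mathrm{Hess}\,\phi$ and $\mathrm{Hess}\,\phi^*$ well defined, and the strict positivity of $\rho$ on its (open convex) support is what allows undoing the change of variables in the target.

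Then I would carry out the core computation in three moves. (i) Apply the change of variables to $g(x)=\langle x,f(x)\rangle$, obtaining $\int \langle x,f(x)\rangle\,d\mu(x)=\int \langle \nabla\phi(y), f(\nabla\phi(y))\rangle\,e^{-\phi(y)}\,dy$. (ii) Use $\nabla\phi(y)\,e^{-\phi(y)}=-\nabla_y\big(e^{-\phi(y)}\big)$ and integrate by parts (boundary term vanishing by the full-support assumption together with the growth control on $f$): the right-hand side equals $\int e^{-\phi(y)}\,\mathrm{div}_y\!\big[f(\nabla\phi(y))\big]\,dy$, and the chain rule gives $\mathrm{div}_y\big[f(\nabla\phi(y))\big]=\mathrm{tr}\big[(\nabla f)(\nabla\phi(y))\,\mathrm{Hess}\,\phi(y)\big]$. (iii) Change variables back, $y=\nabla\phi^*(x)$, via the same pushforward relation now read in the direction $e^{-\phi}\,dy\to\mu$: the last integral becomes $\int \mathrm{tr}\big[(\nabla f)(x)\,\mathrm{Hess}\,\phi(\nabla\phi^*(x))\big]\,d\mu(x)$. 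Since $\mathrm{Hess}\,\phi$ is symmetric, $\mathrm{tr}\big[(\nabla f)(x)\,A(x)\big]=\langle A(x),\nabla f(x)\rangle_{HS}$, which is the claimed identity. To identify the two forms of the kernel, differentiate $\nabla\phi^*\circ\nabla\phi=\mathrm{id}$ to get $\mathrm{Hess}\,\phi^*(\nabla\phi(y))\,\mathrm{Hess}\,\phi(y)=I$, hence $\mathrm{Hess}\,\phi(y)=\big(\mathrm{Hess}\,\phi^*(\nabla\phi(y))\big)^{-1}$; substituting $y=\nabla\phi^*(x)$ yields $A(x)=(\mathrm{Hess}\,\phi^*(x))^{-1}$, and in particular $A$ is symmetric positive definite, as a Stein kernel should be.

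The only delicate step is the integration by parts in (ii): one must justify that the boundary contribution vanishes, and this is exactly where the standing hypotheses enter — strict positivity of $\rho$ on its support and full domain $\mathbb{R}^d$ of the moment map $\phi$ together force enough decay of $e^{-\phi}$ and of its gradient, while the growth restriction on $f$ controls the remaining factor. Under weaker regularity the moment map need only be convex, not $C^2$ (as the Dirac-combination example preceding the statement shows), in which case one would approximate $\phi$ by smooth strictly convex exhaustions, run the identity above at each stage, and pass to the limit; but under the hypotheses as stated the direct computation above is complete.
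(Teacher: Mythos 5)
Your argument is correct and is essentially the standard proof: the paper itself only quotes this theorem from Fathi without proof, but your three-step computation (push forward to $e^{-\phi}\,dx$, integrate by parts against $\nabla\phi\,e^{-\phi}=-\nabla e^{-\phi}$, push back via $\nabla\phi^*$) is exactly the scheme the paper uses for its free analogue in the very next theorem, where the Schwinger--Dyson equation for $\nu_u$ plays the role of your integration by parts and the substitution $f=g\circ u'$ plays the role of your change of variables. The identification $\mathrm{Hess}\,\phi(\nabla\phi^*)=(\mathrm{Hess}\,\phi^*)^{-1}$ by differentiating $\nabla\phi^*\circ\nabla\phi=\mathrm{id}$ and the boundary-term discussion are both handled appropriately, so nothing further is needed.
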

\end{flushleft}
In free context and in the one dimensional case it seems plausible that a similar statement should hold, by following the same heuristic,
\newline 
the map $\mu \mapsto \nu_u$ admits an unique fixed point given by the standard semicircular distribution, and thus heuristically if $u\approx \frac{x^2}{2}$, we should expect that $\mu\approx \mathcal{S}(0,1)$. Indeed, we can also construct free Stein kernel with respect to the semicircular potential, thanks to this {\it free moment map}. It is also interesting to note that it has exactly the same form as in the classical case, up to replacing the classical differential operators by their free counterpart, and in particular to define the noncommutative Hessian. In fact, in the free context the noncommutative Hessian takes the form of the operator $\mathcal{J}\mathcal{D}$. 
Then we will show that in the free context, we have a free Stein kernel of the form $(\mathcal{J}\mathcal{D}u^*)^{-1}$ where $u^*$ denotes the Legendre transform of the free moment map.

\begin{theorem}
Let's suppose that $\mu$ is centered, absolutely continuous with respect to the Lebesgue measure and supported on a compact interval. Denoting $u$, the free moment map of $\mu$ with respect to the free Gibbs measure $\nu_{u}$, i.e $\mu=(u')_{\sharp}\nu_{u}$. If $u$ is $\mathcal{C}^2$ and $u''>0$ on $\mathbb{R}$, then:
\begin{equation}
(x,y)\mapsto (\mathcal{J}\mathcal{D}u^*(x,y))^{-1}:=\frac{x-y}{(u^*)'(x)-(u^*)'(y)}=\frac{x-y}{(u')^{-1}(x)-(u')^{-1}(y)}
\end{equation}
for is a free Stein kernel for $\mu$ with respect to the standard semicircular potential $\nu_{\frac{1}{2}x^2}$.
\end{theorem}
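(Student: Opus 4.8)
The plan is to verify the defining free Stein equation directly, exploiting the change-of-variables provided by the free moment map and the characterization of the free Gibbs measure $\nu_u$ via its conjugate variable. In dimension one the free Stein equation for $\mu$ with respect to the semicircular potential $V = \tfrac12 x^2$ (so $\mathcal{D}V(x) = x$) reads: find $A\colon \mathbb{R}^2 \to \mathbb{R}$ such that for all nice test functions $f$,
\begin{equation}
\int x\, f(x)\, d\mu(x) = \int\!\!\int A(x,y)\,\frac{f(x)-f(y)}{x-y}\, d\mu(x)\, d\mu(y).
\end{equation}
Since $\mu = (u')_\sharp \nu_u$, I would push everything back to $\nu_u$: writing $x = u'(s)$, $y = u'(t)$ with $s,t$ distributed according to $\nu_u$, the left-hand side becomes $\int u'(s)\, f(u'(s))\, d\nu_u(s)$. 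The key point is that $\nu_u$ is the free Gibbs measure with potential $u$, so its conjugate variable is $u'$; that is, for any nice $g$,
\begin{equation}
\int u'(s)\, g(s)\, d\nu_u(s) = \int\!\!\int \frac{g(s)-g(t)}{s-t}\, d\nu_u(s)\, d\nu_u(t).
\end{equation}
Applying this with $g = f \circ u'$ turns the left-hand side into a double integral of $\frac{f(u'(s)) - f(u'(t))}{s-t}$ against $d\nu_u(s)\, d\nu_u(t)$.

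Next I would rewrite that difference quotient in the $x,y$ variables. Since $x = u'(s)$ and $y = u'(t)$, the chain-rule-type identity $\frac{f(u'(s)) - f(u'(t))}{s-t} = \frac{f(x)-f(y)}{x-y}\cdot\frac{u'(s)-u'(t)}{s-t}$ holds whenever $x \neq y$. Now the second factor, expressed back in $x,y$, is precisely $\frac{u'(s)-u'(t)}{s-t} = \frac{x-y}{(u')^{-1}(x)-(u')^{-1}(y)}$, because $s = (u')^{-1}(x)$ and $t = (u')^{-1}(y)$. Since $u$ is $\mathcal{C}^2$ with $u'' > 0$, $u'$ is a $\mathcal{C}^1$ diffeomorphism onto its image and $(u')^{-1} = (u^*)'$ by the standard Legendre-duality identity recalled in the excerpt ($\nabla\phi^* = (\nabla\phi)^{-1}$), so this factor is exactly the candidate kernel $A(x,y) = \frac{x-y}{(u^*)'(x)-(u^*)'(y)} = (\mathcal{J}\mathcal{D}u^*(x,y))^{-1}$. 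Pushing the double integral forward under $u' \times u'$ (using $\mu = (u')_\sharp \nu_u$ on each coordinate) then yields exactly $\int\!\!\int A(x,y)\,\frac{f(x)-f(y)}{x-y}\, d\mu(x)\, d\mu(y)$, which is the right-hand side of the free Stein equation.

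The main obstacle is analytic/regularity bookkeeping rather than conceptual: I must make sure that $g = f\circ u'$ is an admissible test function for the conjugate-variable identity for $\nu_u$ (this is where compact support of $\mu$, hence boundedness of $u'$ on $\operatorname{supp}\nu_u$, and $\mathcal{C}^2$-smoothness of $u$ enter), and that the diagonal $\{x=y\}$ — equivalently $\{s=t\}$ — is negligible so the difference-quotient manipulations are licit; since $\mu$ is absolutely continuous, $(\mu\otimes\mu)(\{x=y\}) = 0$, and similarly for $\nu_u\otimes\nu_u$, which handles this. One should also check that $A \in L^2(\mu\otimes\mu)$ so that it is a genuine element of the Stein-kernel space: because $u'' > 0$ is bounded below on the compact $\operatorname{supp}\nu_u$, the quantity $\frac{u'(s)-u'(t)}{s-t}$ is bounded above there, giving $A \in L^\infty(\mu\otimes\mu) \subset L^2(\mu\otimes\mu)$. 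Finally I would remark that the identity $(\mathcal{J}\mathcal{D}u^*)^{-1} = \frac{x-y}{(u^*)'(x)-(u^*)'(y)}$ is immediate from $\mathcal{D}u^* = (u^*)'$ and $\mathcal{J}f(x,y) = \frac{f(x)-f(y)}{x-y}$ in dimension one, both recalled in the excerpt, and that $(u^*)' = (u')^{-1}$ gives the second displayed form.
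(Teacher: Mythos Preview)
Your proof is correct and follows essentially the same route as the paper's: apply the conjugate-variable identity for $\nu_u$ to the composed test function $f\circ u'$, factor the resulting difference quotient as $\frac{f(u'(s))-f(u'(t))}{u'(s)-u'(t)}\cdot\frac{u'(s)-u'(t)}{s-t}$, and push forward under $u'$ using $\mu=(u')_\sharp\nu_u$. The extra regularity bookkeeping you include (admissibility of $f\circ u'$, negligibility of the diagonal, $L^\infty$ and hence $L^2$ bound on $A$) is a welcome addition that the paper omits.
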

\begin{proof}
For a test function $f$, we have:
\begin{equation}
\int u'(x)f(x)d\nu_{u}(x)=\int\int\frac{f(x)-f(y)}{x-y}d\nu_{u}(x)d\nu_{u}(y)
\end{equation}
Take now $g$ a test function and set $f(x)=g(u'(x))$. Then the previous equation become:
\begin{eqnarray}
\int u'(x)g(u'(x))d\nu_{u}(x)&=&\int\int\frac{g(u'(x))-g(u'(y))}{x-y}d\nu_{u}(x)d\nu_{u}(y)\nonumber\\
&=&\int\int\frac{g(u'(x))-g(u'(y))}{u'(x)-u'(y)}\frac{u'(x)-u'(y)}{x-y}d\nu_{u}(x)d\nu_{u}(y)
\end{eqnarray}
\bigbreak
Now put $\tilde{x}=(u^*)'(x)=(u')^{-1}(x)$ and $\tilde{y}=(u^*)'(y)=(u')^{-1}(y)$, and note that that this change of variables also sends $\mu$ to $\nu_{\mu}$ (and thus also for the product measure $d^{\otimes 2}\mu$ and $d^{\otimes 2}\nu_u$). Doing the same change of variables in the left leg, we then obtain:
\begin{equation}
\int xg(x)d{\mu}=\int\int\frac{x-y}{(u^*)'(x)-(u^*)'(y)}\frac{g(x)-g(y)}{x-y}d\mu(x)d{\mu}(y),
\end{equation}
And thus, we obtain the desired conclusion.
\end{proof}

\begin{flushleft}
Now, we recall that Cébron proved in \cite{C} that the free quadratic Wasserstein distance for compactly supported probability measure (which coincide in dimension one with the usual quadratic Wasserstein distance between compactly supported probability measure) to the standard semicircular distribution is controlled by the free Stein discrepancy.
\end{flushleft}
\begin{theorem}(Cébron, proposition 2.7 in \cite{C})
    Let $\mu$ a compactly supported probability measure, then:
    \begin{equation}
        W_2(\mu,\mathcal{S}(0,1))^2\leq \Sigma^*(\mu,\nu_{\frac{x^2}{2}})^2
    \end{equation}
\end{theorem}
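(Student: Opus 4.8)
The plan is to carry over to the one-dimensional free setting the Ledoux--Nourdin--Peccati proof that $W_2(\nu,\gamma)^2\le S(\nu\,|\,\gamma)^2$, exploiting that the semicircular law is the fixed point of the relevant Stein construction and that in dimension one the free quadratic Wasserstein distance coincides with the classical one, so that the monotone transport map is available. (An alternative would run through the free Ornstein--Uhlenbeck interpolation $X_t=e^{-t}X+\sqrt{1-e^{-2t}}S$ together with a bound $\Phi^*(X_t\,|\,V_1)\le\frac{e^{-4t}}{1-e^{-2t}}\Sigma^*(\mu,\nu_{\frac{x^2}{2}})^2$ on the relative free Fisher information along the flow, integrated via $\int_0^\infty\frac{e^{-2t}}{\sqrt{1-e^{-2t}}}\,dt=1$; but the transport argument is more direct and yields the sharp constant at once.)

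We may assume $\Sigma^*(\mu,\nu_{\frac{x^2}{2}})<\infty$ and fix a free Stein kernel $A$ for $\mu$ relative to $V=\tfrac{x^2}{2}$; then $\mathcal DV(x)=x$, $\mathcal Jf(x,y)=\frac{f(x)-f(y)}{x-y}$, and the Stein relation reads $\int xf\,d\mu=\int\!\!\int A(x,y)\,\frac{f(x)-f(y)}{x-y}\,d\mu(x)\,d\mu(y)$ for noncommutative polynomials $f$, hence, by approximation on the compact support of $\mu$, for bounded monotone $f$. (Applying it to $f\equiv1$ forces $\mu$ to be centered.) Let $\psi$ be the nondecreasing map with $\psi_{\#}\mathcal S(0,1)=\mu$, which exists since $\mathcal S(0,1)$ is absolutely continuous. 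Then
$$
W_2(\mu,\mathcal S(0,1))^2=\int(\psi(z)-z)^2\,d\mathcal S(0,1)(z)=\mathrm{Var}(\mu)+1-2\int z\,\psi(z)\,d\mathcal S(0,1)(z),
$$
using $\int\psi^2\,d\mathcal S=\int x^2\,d\mu=\mathrm{Var}(\mu)$ and $\int z^2\,d\mathcal S=1$.

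The heart of the argument is to re-express each of the three terms through a Stein relation and then collapse them by an elementary inequality. Set $\widehat A(z,w):=A(\psi(z),\psi(w))\,(\ge 0$, see below$)$ and $B(z,w):=\frac{\psi(z)-\psi(w)}{z-w}\ge 0$. Changing variables $x=\psi(z)$, $y=\psi(w)$, which carries $\mathcal S(0,1)\otimes\mathcal S(0,1)$ onto $\mu\otimes\mu$, in the Stein relation for $\mu$ with $f=\mathrm{id}$ gives $\mathrm{Var}(\mu)=\int\!\!\int\widehat A\,d\mathcal S\otimes d\mathcal S$, and with $f=\psi^{-1}$ it gives $\int z\,\psi(z)\,d\mathcal S=\int\!\!\int\widehat A\,B^{-1}\,d\mathcal S\otimes d\mathcal S$. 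On the other hand, since $\mathcal S(0,1)=\nu_{\frac{x^2}{2}}$ has the constant kernel $1$ as Stein kernel, its Stein relation applied to $\psi$ reads $\int z\,\psi(z)\,d\mathcal S=\int\!\!\int B\,d\mathcal S\otimes d\mathcal S$. Combining,
$$
2\int z\,\psi(z)\,d\mathcal S=\int\!\!\int\big(\widehat A\,B^{-1}+B\big)\,d\mathcal S\otimes d\mathcal S\ \ge\ 2\int\!\!\int\sqrt{\widehat A}\,\,d\mathcal S\otimes d\mathcal S
$$
by the pointwise inequality $\widehat A\,B^{-1}+B\ge 2\sqrt{\widehat A}$ (AM--GM, valid because $\widehat A\ge 0$ and $B>0$ a.e.). Plugging back,
$$
W_2(\mu,\mathcal S(0,1))^2\le\int\!\!\int\widehat A\,d\mathcal S\otimes d\mathcal S+1-2\int\!\!\int\sqrt{\widehat A}\,d\mathcal S\otimes d\mathcal S=\int\!\!\int(\sqrt{\widehat A}-1)^2\,d\mathcal S\otimes d\mathcal S\le\int\!\!\int(\widehat A-1)^2\,d\mathcal S\otimes d\mathcal S,
$$
the last step from $(\sqrt a-1)^2=(a-1)^2/(\sqrt a+1)^2\le(a-1)^2$; and $\int\!\!\int(\widehat A-1)^2\,d\mathcal S\otimes d\mathcal S=\lVert A-1\otimes 1^{op}\rVert^2_{L^2(\mu\otimes\mu)}$ after changing variables back. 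Taking the infimum over admissible Stein kernels $A$ gives $W_2(\mu,\mathcal S(0,1))^2\le\Sigma^*(\mu,\nu_{\frac{x^2}{2}})^2$.

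The main obstacle is the non-negativity $\widehat A\ge 0$ used in the AM--GM step, i.e.\ the positivity $\mu\otimes\mu$-a.e.\ of the Stein kernel: a general free Stein kernel need not be non-negative, and it is not immediate that the infimum defining $\Sigma^*$ is attained or approached by non-negative kernels. I would resolve this by appealing to the moment-map construction of the previous section — for $\mu$ centered, absolutely continuous and compactly supported with sufficiently regular free moment map $u$, the explicit Stein kernel $\frac{x-y}{(u')^{-1}(x)-(u')^{-1}(y)}$ is non-negative by convexity of $u^*$ — together with an argument that the infimum over the affine space of Stein kernels may be restricted to non-negative ones without loss (via a convexity/projection argument, or by directly regularizing a near-optimal kernel), upgrading the displayed bound, valid for every non-negative $A$, to $\Sigma^*(\mu,\nu_{\frac{x^2}{2}})^2$. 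The remaining points — justifying the Stein relation for the bounded monotone, non-polynomial test functions $\psi$, $\psi^{-1}$, and the integrability of the divided differences $B$, $B^{-1}$ against $\widehat A$ (all controlled since $\int\!\!\int B\,d\mathcal S\otimes d\mathcal S=\int\!\!\int\widehat A B^{-1}\,d\mathcal S\otimes d\mathcal S=\int z\psi(z)\,d\mathcal S\le\sqrt{\mathrm{Var}(\mu)}<\infty$) — are routine approximations using compactness of the supports.
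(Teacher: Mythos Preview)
The paper does not give its own proof of this statement; it is quoted directly from C\'ebron \cite{C}. What the paper \emph{does} use throughout (see Lemma~\ref{lem1} and the proof of the $L^2$-transport inequalities) is precisely the route you relegate to a parenthesis: the Dabrowski bound $W_2(X,S)\le\int_0^\infty \Phi^*(X(t)\,|\,V_1)^{1/2}\,dt$ combined with the Fathi--Nelson estimate $\Phi^*(X(t)\,|\,V_1)\le \frac{e^{-4t}}{1-e^{-2t}}\Sigma^*(X\,|\,V_1)^2$, after which $\int_0^\infty \frac{e^{-2t}}{\sqrt{1-e^{-2t}}}\,dt=1$ gives $W_2\le\Sigma^*$ with no positivity hypothesis on the kernel. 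That is also C\'ebron's argument.

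Your transport proof, by contrast, has a real gap exactly where you flag it. The AM--GM step $\widehat A B^{-1}+B\ge 2\sqrt{\widehat A}$ needs $\widehat A\ge 0$, and a generic free Stein kernel for $\mu$ has no reason to be non-negative $\mu\otimes\mu$-a.e. Your proposed fixes do not close this: the moment-map kernel is indeed non-negative, but it is one specific kernel and need not realize (or approach) the infimum defining $\Sigma^*$, so at best you would prove $W_2^2\le \lVert A_{\mathrm{mm}}-1\rVert_2^2$, which is the corollary that follows the theorem in the paper rather than the theorem itself. And the assertion that ``the infimum over the affine space of Stein kernels may be restricted to non-negative ones without loss'' is exactly the missing lemma---nothing in the affine/projection structure forces the $L^2$-closest kernel to $1\otimes 1$ to be non-negative, and you give no argument for it. There is also a secondary issue: you need the Stein relation for $f=\psi^{-1}$, but $\psi^{-1}$ is only defined on $\mathrm{supp}(\mu)$ and need not extend to a function to which polynomial approximation in $L^2(\mu)$ applies straightforwardly when $\mu$ has atoms or gaps; this is fixable but not ``routine'' at the level of detail you give.

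In short: the argument you dismissed is the one that works cleanly and is the one underlying both C\'ebron's proof and this paper's later transport estimates; your preferred route would require an additional structural lemma about non-negative Stein kernels that is neither stated nor proved.
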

In particular, this allows us to bound the free Stein discrepancy by a regularity estimate on the non commutative Hessian of $u$ as follows:
\begin{corollary}
If $\mu$ satisfies the hypothesis of the previous theorem, then the quadratic Wasserstein distance between $\mu$ and the standard semicircular distribution $\mathcal{S}(0,1)$ is bounded as follows:
\begin{eqnarray}
W_2(\mu,\mathcal{S}(0,1))^2\leq \Sigma^*(\mu|\nu_{\frac{x^2}{2}})^2&\leq &\int\int\bigg[\frac{x-y}{(u^*)'(x)-(u^*)'(y)}-1\bigg]^2d\mu(x)d{\mu}(y)\nonumber\\
&\leq & \int\int\bigg[\frac{u'(x)-u'(y)}{x-y}-1\bigg]^2d\nu_{u}(x)d{\nu}_{u}(y),
\end{eqnarray}
\end{corollary}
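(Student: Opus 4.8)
The plan is to obtain the three displayed inequalities one after another, each from a result already available. The leftmost bound $W_2(\mu,\mathcal{S}(0,1))^2\leq \Sigma^*(\mu|\nu_{\frac{x^2}{2}})^2$ is immediate: it is exactly Cébron's proposition recalled just above, applied to the compactly supported measure $\mu$, so nothing has to be done for that step.

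For the middle inequality I would unwind the definition of the free Stein discrepancy. In the one-dimensional commutative setting with $n=1$, the von Neumann tensor product $\mathcal{M}\bar\otimes\mathcal{M}^{op}$ is $L^\infty(\mu\times\mu)$, the element $(1\otimes 1^{op})\otimes I_n$ is the constant function $1$ on $\mathbb{R}^2$, and $\lVert\cdot\rVert_{L^2(M_1(\mathcal{M}\bar\otimes\mathcal{M}^{op}))}$ is $\big(\int\int|\cdot|^2\,d\mu\,d\mu\big)^{1/2}$; this is just the $n=1$ instance of the identification of $L^2(\mathcal{M}\bar\otimes\mathcal{M}^{op})$ with $HS(L^2(\mathcal{M}))$ set up in Section \ref{sec1}. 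The theorem proved just above says that $A(x,y)=\frac{x-y}{(u^*)'(x)-(u^*)'(y)}$ is an admissible free Stein kernel for $\mu$ relative to $\nu_{\frac{x^2}{2}}$; I would also check that it lies in $L^2(\mu\times\mu)$, which holds because, writing $A(x,y)=u''(\eta)$ for some $\eta$ between $(u^*)'(x)$ and $(u^*)'(y)$ by the mean value theorem (using $(u^*)'=(u')^{-1}$), the kernel $A$ is bounded on $\operatorname{supp}\mu\times\operatorname{supp}\mu$, since $u''$ is continuous and $(u^*)'(\operatorname{supp}\mu)=\operatorname{supp}\nu_u$ is compact. Inserting this particular $A$ into the infimum defining $\Sigma^*$ then yields $\Sigma^*(\mu|\nu_{\frac{x^2}{2}})^2\leq \int\int\big[\frac{x-y}{(u^*)'(x)-(u^*)'(y)}-1\big]^2\,d\mu(x)\,d\mu(y)$.

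For the last line I would perform the change of variables $x=u'(s),\ y=u'(t)$. Since $u''>0$ on all of $\mathbb{R}$, the map $u'$ is a strictly increasing $C^1$-diffeomorphism onto an open interval with $(u^*)'=(u')^{-1}$, so $(u^*)'(u'(s))=s$; and the relation $\mu=(u')_\sharp\nu_u$ says precisely that this substitution pulls $d\mu\otimes d\mu$ back to $d\nu_u\otimes d\nu_u$. Consequently $A(u'(s),u'(t))=\frac{u'(s)-u'(t)}{s-t}$, and the middle integral equals $\int\int\big[\frac{u'(s)-u'(t)}{s-t}-1\big]^2\,d\nu_u(s)\,d\nu_u(t)$, so the last inequality is in fact an equality.

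I do not anticipate a genuine obstacle: the substance is entirely carried by Cébron's proposition and the preceding theorem, and what needs care is only bookkeeping — the translation between the operator-algebraic norm defining $\Sigma^*$ at $n=1$ and the scalar double integral, the verification that the explicit kernel really is in $L^2(\mu\times\mu)$ (done above via the mean value theorem and compactness of the supports), and the observation that the change of variables is a bona fide diffeomorphism, so that no null-set subtleties arise and the passage from the free Stein identity on test functions to the $L^2$-pairing is legitimate by density.
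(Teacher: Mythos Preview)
Your proposal is correct and matches the intended argument: the paper states the corollary without an explicit proof, treating it as an immediate consequence of C\'ebron's bound together with the explicit free Stein kernel constructed in the preceding theorem, and your write-up fills in exactly those details (infimum over kernels, then the change of variables via $\mu=(u')_\sharp\nu_u$). Your remark that the last displayed inequality is in fact an equality is also correct.
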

This means that if we are able to bound the non commutative Hessian $\mathcal{J}\mathcal{D}{u}$ average against $d\nu_v^{\otimes 2}$, we are able to obtain estimates on transport distances to the semicircular law.
\begin{remark}
In the classical setting, Fathi and Mikulincer \cite{FM} have discovered a very interesting phenomenon from the construction of this "{\it moment Stein kernel}". Indeed, it is possible under some mild assumptions (e.g this Stein kernel should be in the positive cone $S_d^{++}(\mathbb{R})$ and uniformly bounded from below), to construct a diffusion process which admits $\mu$ has its {\it unique invariant measure}, and for which the generator also admits other nice properties (its generator has a spectral gap: the smallest (non-zero) eigenvalue is one ...) provided that $\mu$ satisfies the conditions of theorem \eqref{3.3}. Concretely, this means that if $(B_t)_{t\geq 0}$ is a standard Brownian motion on some filtered probability space, then the diffusion process $(X_t)_{t\geq 0}$:
\begin{equation}
    dX_t=-X_tdt+\sqrt{2Hess\:\phi(\nabla \phi)^*}(X_t)dB_t,
\end{equation}
has $\mu$ as unique invariant measure (the uniqueness is then ensured from a standard criterion on the lower bound on this diffusion coefficient). However as noticed by Fathi and Mikulincer \cite{FM}, the "{\it moment Stein kernel}" is not generally globally Lipschitz, but belong generally to some Sobolev space. Using these moment Stein kernels, a variant of the Crippa and De Lellis \cite{CripaD} techniques to study transport equations, and a proxy-condition coming from the crucial Lusin-Lipschitz property, they obtained a new generalization of the Ledoux, Nourdin, Peccati \cite{LNP} estimates relating transport distances and Stein discrepancies in a non-gaussian setting, that is for invariant measures $\mu$ of diffusions which are well-conditioned log-concave measures.
\end{remark}
\begin{remark}
Now it is important to remark that the above constructed free Stein kernels also enjoys these properties: they are positive, and measurable, from the assumption over the function $u$. Thus one can see $(\mathcal{J}\mathcal{D}u^*(.))^{-1}\in L^2(\mathcal{M})\otimes L^2(\mathcal{M}^{op})\simeq HS(L^2(\mathcal{M}))$ over some tracial $W^*$-probability space $(\mathcal{M},\tau)$, which will be supposed to contain a copy of $L(\mathbb{F}_{\infty})$ and to be filtered:
\newline
This means that a free Brownian motion exists $(S_t)_{t\geq 0}$ on $(\mathcal{M},\tau)$, which should also contain its corresponding filtration $(\mathcal{A}_t)_{t\geq 0}$. This will allows us to define the following free SDE.
\end{remark}
\begin{definition}
Let $(X_t^{X_0})_{t\geq 0}$ be the free stochastic process starting at any (bounded) initial data $X_0\in \mathcal{A}_0$ with law $\mu$ and solution of the following free SDE:
\begin{equation}
dX_t=-X_tdt+\sqrt{2(\mathcal{J}\mathcal{D}u^*(X_t))^{-1}}\sharp dS_t,
    \end{equation}
    \end{definition}
    \begin{flushleft}
This free process is in fact a modification of the the free Ornstein-Uhlenbeck process:
\begin{equation}
    dX_t=-X_tdt+\sqrt{2}dS_t,
\end{equation}
which has $\mathcal{S}(0,1)$ has invariant measure, and for which we recall that we have an exponential fast convergence to the equilibrium for the relative entropy via the free Log-Sobolev inequality, and  even for the free quadratic Wasserstein metric thanks to free Talagrand inequality proved in the breakthrough work of Dabrowski \cite{Dab10} by means of free {\it Otto-Villani estimates} for the free analog of $W_2$ between two close points of the free Ornstein-Uhlenbeck diffusion and the relative free Fisher information along the flow.
\bigbreak
Here, the procedure is to switch the usual conventions of a Langevin diffusion: all the information is carried in the diffusion coefficient, and not in the drift.
\newline
It turns out that this free diffusion has a very interesting property. Indeed, its stationary distribution will be $\mu$. Thus, it gives a large panel of compactly supported measures $\mu$, which are supposed to satisfy the assumptions of the previous theorem, and which are the equilibrium measures of free diffusion processes.
        
    \end{flushleft}
\begin{proposition}
Assume that $(J\mathcal{D}u^*)^{-1}$ is bounded from above and below by some positive constants, then $\mu$ is the unique invariant measure of the free diffusion $(X_t)_{t\geq 0}$. 
\end{proposition}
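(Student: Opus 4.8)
The plan is to treat the two assertions — that $\mu$ is invariant, and that it is the \emph{only} invariant measure — by two rather different routes: the first is a direct consequence of the free Stein kernel identity obtained above, while the second is where the two-sided bound on $(\mathcal{J}\mathcal{D}u^*)^{-1}$ is essential.

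First I would write down the infinitesimal generator of the free diffusion. Setting $A:=(\mathcal{J}\mathcal{D}u^*)^{-1}$, an application of the free It\^o formula to a noncommutative polynomial $P$ along the solution of $dX_t=-X_t\,dt+\sqrt{2A(X_t)}\,\sharp\,dS_t$ shows that $P(X_t)$ splits as a martingale plus a finite-variation part with density
\begin{equation}
\mathcal{L}^{A}P(X_t)=-\langle X_t,\mathcal{D}P(X_t)\rangle+\big(\text{contraction of }A(X_t)\text{ against }\mathcal{J}\mathcal{D}P(X_t)\big),
\end{equation}
the factor $2$ in $\sqrt{2A}$ absorbing the $\tfrac12$ of the It\^o correction (in one dimension $\mathcal{D}P=P'$ and $\mathcal{J}\mathcal{D}P$ is the free difference quotient $(P'(x)-P'(y))/(x-y)$). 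I would cross-check the normalisation on the free Ornstein--Uhlenbeck case $A\equiv 1\otimes 1^{op}$, where the Chebyshev polynomials of the second kind are eigenfunctions and $\mathcal{L}^{A}$ reduces to (minus) the number operator, to make sure the contraction is the right one.

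For invariance I would show that $\tau\big(\mathcal{L}^{A}P(X)\big)=0$ for every noncommutative polynomial $P$ when $X\sim\mu$; since polynomials form a core for $\mathcal{L}^{A}$, this is exactly the statement that $\mu$ is a stationary state of the associated Markov semigroup. Taking the trace, the drift contributes $-\langle X,\mathcal{D}P(X)\rangle_{\tau}=-\langle[\mathcal{D}V](X),\mathcal{D}P(X)\rangle_{\tau}$ with $V=\tfrac12\sum_j t_j^2$ (so that $\mathcal{D}V=T$), while the trace of the It\^o correction equals, after rearranging the contractions by traciality, precisely $\langle A,[\mathcal{J}\mathcal{D}P](X)\rangle_{\tau\otimes\tau^{op}}$. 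These two cancel \emph{exactly} by the free Stein equation for $\mu$ relative to $V$, applied with test tuple $\mathcal{D}P$, and that $A=(\mathcal{J}\mathcal{D}u^*)^{-1}$ is an admissible Stein kernel there is precisely the content of the theorem proved earlier in this section. Equivalently, this cancellation says that $\mathcal{L}^{A}$ is symmetric with respect to $\mu$, the free Stein identity being nothing but the integrated-by-parts form of that symmetry; note in passing that the coordinate $x$ is an eigenfunction, $\mathcal{L}^{A}x=-x$, so the spectral gap is at most $1$, as in the classical picture recalled above.

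Finally, uniqueness, where the assumption that $A$ lies between two positive constants enters. The lower bound $A\geq c>0$ makes the free SDE uniformly elliptic and its Dirichlet form coercive, $-\tau_{\mu}\big(P\,\mathcal{L}^{A}P\big)\geq c\,\|[\partial P](X)\|_{HS}^{2}$; the upper bound guarantees, on one hand, well-posedness of the equation, and on the other, through the free analogue of the classical implication that a bounded Stein kernel forces a Poincar\'e inequality, a bound $\mathrm{Var}_{\mu}(P)\leq C\,\|[\partial P](X)\|_{HS}^{2}$. Combining the two yields a spectral gap for $\mathcal{L}^{A}$ on $L^{2}(W^{*}(X))$, hence exponential relaxation of the semigroup to $\mu$ and a fortiori uniqueness of the invariant measure; alternatively one may invoke irreducibility and a strong-Feller-type regularity of the uniformly elliptic diffusion together with a Doob-type uniqueness criterion, which is the ``standard criterion'' alluded to above. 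The main obstacle is that, under the sole hypotheses $u\in\mathcal{C}^{2}$, $u''>0$, the coefficient $A(x,y)=(x-y)/((u^{*})'(x)-(u^{*})'(y))$ is merely continuous and two-sidedly bounded, not Lipschitz, so well-posedness and irreducibility cannot be quoted off the shelf: making them rigorous would require transplanting to the noncommutative setting the Lusin--Lipschitz and Crippa--De Lellis transport-equation techniques used by Fathi and Mikulincer in the classical case, or else a suitable regularisation argument, and this is where the real work lies.
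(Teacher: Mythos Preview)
Your approach to invariance is essentially identical to the paper's: compute the generator via the free It\^o formula and observe that $\tau(\mathcal{L}^{A}P(X))=0$ is exactly the free Stein identity for $\mu$ with test function $P'$ (the paper writes this in the one-dimensional measure form, $\int\int\frac{f'(x)-f'(y)}{x-y}A(x,y)\,d\mu\,d\mu=\int xf'(x)\,d\mu$, but the content is the same).

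On uniqueness you do considerably more than the paper. The paper's proof in fact stops after establishing invariance; uniqueness is not argued there at all, and is only alluded to in the remark preceding the proposition as following from ``a standard criterion on the lower bound on this diffusion coefficient''. Your outline via coercivity of the Dirichlet form, a Poincar\'e inequality from the upper bound on $A$, and the resulting spectral gap is a reasonable sketch of what such a standard criterion would amount to in this setting, and your candid acknowledgment that the non-Lipschitz nature of $A$ makes well-posedness and irreducibility nontrivial (pointing to the Fathi--Mikulincer machinery) is accurate --- indeed the paper itself flags exactly this obstruction later in its open-questions section. So your treatment of uniqueness is more honest about the gap than the paper's own proof, which simply does not address it.
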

\begin{proof}
We first compute the infinitesimal generator $L$ of this free SDE. Thanks to the free Ito formula of Biane and Speicher (see also the trace formula of Nikitopolous: theorem 3.5.3, and remark 3.5.4 in \cite{ENiki}) 
, we easily deduce that the generator is given for all $\mathcal{C}^2$ bounded functions by:
\begin{eqnarray}
    (Lf)(x)&=&\Delta_{(J\mathcal{D}u^*)^{-1}(X_0)}f(x)-xf'(x),
\end{eqnarray}
Where we denote $\Delta_{(J\mathcal{D}u^*)^{-1}(X_0)}$ as the correction term coming from the free Ito formula.
\newline
Then we are left to prove that $\mathbb{E}_{\mu}(Lf)=0$ for all $f\in \mathcal{C}^2$ bounded functions (where $\mathbb{E}$ denotes the expectation taken under $\mu$), i.e:
\begin{eqnarray}
    \int (Lf)(x)d\mu(x)=0,
\end{eqnarray}
But,
\begin{eqnarray}
    \int (Lf)(x)d\mu(x)&=&\int\int \frac{f'(x)-f'(y)}{x-y}(\mathcal{J}\mathcal{D}u^*(x,y))^{-1}d\mu(x)d\mu(y)-\int xf'(x)d\mu(x)\nonumber\\
    &=&0.
\end{eqnarray}
which is exactly the Stein equation satisfied by the measure $\mu$ in is weak formulation (we suppose that the test functions are in gradient form, i.e $f\rightarrow f'$).
\end{proof}
\qed
\begin{flushleft}
Unfortunately, as we lack of a definition of the probability measure of a tuple of noncommutative random variables, it is not possible to generalize our argument to the multi-dimensional setting.
\end{flushleft}
We are also able in the same way as Fathi did it in section 5 of \cite{mm}, to construct free Stein kernel easily with respect to other free Gibbs state of reference to compare $\mu$ to a free Gibbs state $\nu_V$ with potential $V$, provided that it exists. This will be possible as soon as the pushforward measure $\mu_V$ of the measure $\mu$ by $V'$ will have barycenter at the origin. It also as expected, exactly the free counterpart of the kernel discovered by Fathi in section 5 of \cite{mm}.
\begin{theorem}
Let's $\nu_{V}$ a free Gibbs measure associated with the potential $V$. Assume that $V$ is uniformly convex, and $\mathcal{C}^2$. Suppose also that $\mu$ has a density w.r.t the Lebesgue measure and that its supported on a compact interval. Denote $\mu_V$ the pushforward of $\mu$ by $V'$ and suppose that $\int xd\mu_V(x)=0$. Then
\begin{equation}
(x,y)\mapsto A_V(V'(x),V'(y)).(\mathcal{J}\mathcal{D}V(x,y))^{-1}:=A_V(V'(x),V'(y))\frac{x-y}{V'(x)-V'(y)}
\end{equation}
is a free Stein kernel for $\mu$ with respect to the potential $V$, where $A_V:\mathbb{R}^2\rightarrow \mathbb{R}$ is a free Stein kernel for $\mu_V$ with respect to the standard semicircular potential $\nu_{\frac{1}{2}x^2}$.
\end{theorem}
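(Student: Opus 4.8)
The plan is to reduce the statement to the free Stein equation for $\mu_V$ relative to the standard semicircular potential via the change of variables $x\mapsto V'(x)$, in exactly the same way the previous theorem was obtained through the change of variables $(u^*)'$. First I would record the structural properties of $V'$ that are forced by the hypotheses: since $V$ is $\mathcal{C}^2$ and uniformly convex there is $\kappa>0$ with $V''\geq\kappa$, so $V'$ is a $\mathcal{C}^1$-diffeomorphism of $\mathbb{R}$ onto its image and is bi-Lipschitz on the compact support $K$ of $\mu$; in particular $|V'(x)-V'(y)|\geq\kappa|x-y|$ and $|V'(x)-V'(y)|\leq (\sup_K V'')\,|x-y|$. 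Consequently $\mu_V=(V')_{\sharp}\mu$ is again compactly supported and absolutely continuous (its density transforms by the $\mathcal{C}^1$ change of variables), and $\mu_V\neq\delta_0$ because $\mu$ has a density. Together with the assumed centering $\int x\,d\mu_V(x)=0$, this guarantees --- applying the previous theorem to $\mu_V$, or invoking the general existence of free Stein kernels relative to $\tfrac12 x^2$ --- that there is $A_V\in L^2(\mu_V^{\otimes 2})$ with
\[
\int x\,g(x)\,d\mu_V(x)=\int\int A_V(x,y)\,\frac{g(x)-g(y)}{x-y}\,d\mu_V(x)\,d\mu_V(y)
\]
for every admissible test function $g$.

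Next I would carry out the change of variables. Given a test function $f$ on $K$, set $g=f\circ (V')^{-1}$; since $(V')^{-1}$ is $\mathcal{C}^1$ this is again an admissible test function, and conversely every $g$ arises this way, so $f\leftrightarrow g$ is a bijection on the relevant class of test functions --- this is precisely where the uniform convexity of $V$ is used. Writing the measures $d\mu_V$ as pushforwards of $d\mu$, the left-hand side of the displayed identity becomes $\int V'(x)\,g(V'(x))\,d\mu(x)=\int V'(x)f(x)\,d\mu(x)$, which is $\int [\mathcal{D}V](x)f(x)\,d\mu(x)$ since $\mathcal{D}V(x)=V'(x)$ in dimension one. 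On the right-hand side, using $g(V'(x))-g(V'(y))=f(x)-f(y)$ and factoring
\[
\frac{g(V'(x))-g(V'(y))}{V'(x)-V'(y)}=\frac{f(x)-f(y)}{x-y}\cdot\frac{x-y}{V'(x)-V'(y)},
\]
the right-hand side turns into $\int\int A_V(V'(x),V'(y))\,\dfrac{x-y}{V'(x)-V'(y)}\,\dfrac{f(x)-f(y)}{x-y}\,d\mu(x)\,d\mu(y)$, which is exactly the free Stein equation for $\mu$ relative to $V$ with kernel $A(x,y)=A_V(V'(x),V'(y))\,(\mathcal{J}\mathcal{D}V(x,y))^{-1}$.

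Finally I would check that $A$ lands in the correct space: the bi-Lipschitz bounds give $\kappa^{-1}\geq\frac{x-y}{V'(x)-V'(y)}\geq(\sup_K V'')^{-1}>0$, so that factor is bounded and measurable on $K\times K$; and since $A_V\in L^2(\mu_V^{\otimes 2})$, the change of variables yields $A_V(V'(\cdot),V'(\cdot))\in L^2(\mu^{\otimes 2})$, hence $A\in L^2(\mu^{\otimes 2})$. The genuinely delicate points are (a) the stability of the test-function class under pre- and post-composition with the diffeomorphism $V'$, which the uniform convexity secures, and (b) verifying that $\mu_V$ inherits enough regularity --- absolute continuity, compact support and the centering $\int x\,d\mu_V=0$ --- to legitimately invoke the existence of $A_V$; once these are settled, the remainder is the same bookkeeping as in the proof of the previous theorem.
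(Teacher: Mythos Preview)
Your proposal is correct and follows essentially the same approach as the paper: both arguments set $g=f\circ (V')^{-1}$ (the paper writes this as $g(x)=f((V^*)'(x))$, using $(V^*)'=(V')^{-1}$), push the integral $\int V'(x)f(x)\,d\mu$ forward to $\int x\,g(x)\,d\mu_V$, apply the free Stein identity for $A_V$, and then undo the change of variables with the factor $\frac{x-y}{V'(x)-V'(y)}$. Your write-up is in fact more careful than the paper's on two auxiliary points---the verification that $\mu_V$ inherits compact support and absolute continuity, and the $L^2(\mu^{\otimes 2})$-membership of the resulting kernel via the bi-Lipschitz bounds on $V'$---which the paper leaves implicit.
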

\begin{proof}
Let's denote $\mu_{V}$ the pushforward of $\mu$ by $V'$. Then we set $g(x)=f((V^*)'(x))$ for an arbitrary test function $f$, and we have:
\begin{eqnarray}\label{gs}
\int_{\mathbb{R}}V'(x)f(x)d\mu(x)&=&\int_{\mathbb{R}}V'(x)g(V'(x))d\mu(x)\nonumber\\
&=&\int_{\mathbb{R}}xg(x)d\mu_{V}(x)\nonumber
\end{eqnarray}
Now let's suppose that $\mu_{V}$ admits free Stein kernels with respect to the standard semicircular potential $\nu_{\frac{1}{2}x^2}$. This is always ensured by Cébron, Fathi and Mai results in \cite{FCM} provided that
\begin{equation}
\int xd{\mu_V}=\int_{\mathbb{R}}V'(x)d\mu=0,
\end{equation}
\newline
Now denote $A_V:\mathbb{R}^2\rightarrow \mathbb{R}$ such a kernel, then \ref{gs} become:
\begin{eqnarray}
\int V'(x)f(x)d\mu&=& \int\int A_V(x,y)\frac{g(x)-g(y)}{x-y}d\mu_V(x)d\mu_V(y)\nonumber\\
&=&\int\int A_V(V'(x),V'(y))\frac{g(V'(x))-g(V'(y))}{V^{'}(x)-V^{'}(y)}d\mu(x)d\mu(y)\nonumber\\
&=&\int\int A_V(V'(x),V'(y))\frac{f(x)-f(y)}{V^{'}(x)-V^{'}(y)}d\mu(x)d\mu(y)\nonumber\\
&=&\int\int A_V(V'(x),V'(y))\frac{x-y}{V'(x)-V'(y)}\frac{f(x)-f(y)}{x-y}d\mu(x)d\mu(y),\nonumber
\end{eqnarray}
Where we used the fact that $V''(x)>0$ for $x\in supp(\mu)$ and thus that $V'(x)-V'(y)\neq 0$, $\mu$ almost everywhere.
\end{proof}
\qed
\section{Higher-Order Stein kernels}
\section{Higher-order free difference quotient}
\begin{flushleft}
In this section, we will investigate a free analog to the notion of "{\it higher-order}" Stein kernels introduced by Fathi in \cite{F} in the classical case, and which introduce higher-order derivatives of test functions in the integration by parts for the standard Gaussian measure. This will allow us to deduce new free functional inequalities. First of all, we need to extend the derivatives operators introduce in section \eqref{sec1} to an higher-order of derivation.
\end{flushleft}
\begin{definition}
We define the differential (Jacobian) of order $k$ for a tuple of noncommutative polynomial $P=(p_1,\ldots,p_n)\in \mathbb{P}^n$ as: 
\begin{equation}
    \mathcal{J}^k P=\left\{\partial^k_{(i_1,\ldots,i_k)} p_j\right\}_{j,i_1,\ldots,i_k=1}^n \in \mathbb{T}_{(k+1,n)}(\mathbb{C}\langle T\rangle^{\otimes {(k+1)}}),
\end{equation}
where $\partial^k_{(i_1,\ldots,i_k)} =(\partial_{i_1}\otimes id^{\otimes k})\circ (\partial_{i_2}\otimes id^{\otimes k-1})\circ\ldots\circ \partial_{i_k}$.
\newline
Here $\mathbb{T}_{(k+1,n)}(\mathbb{C}\langle T\rangle^{\otimes {(k+1)}})$ the space of $(k+1)$ tensors in $n$-noncummutative variables (that is multi-dimensional arrays) whose entries are valued in $\mathbb{C}\langle T\rangle^{\otimes ({k+1})}$.
\end{definition}
Note that we will see this operator acting on tuples of noncommutative polynomials in $X=(x_1,\ldots,x_n)$ through the canonical evaluation homomorphism on tensors, i.e view $J^k_X$ as:
\begin{equation}
J^k_X: \mathbb{C}\langle x_1,\ldots,x_n\rangle^n \rightarrow L^2\bigg(T_{(k+1,n)}(W^*(X))\bigg),
\end{equation}
and view the iterated free difference quotient associated to $X$ as an unbounded operator, denoted in short $\partial_{i_1,\ldots,i_k}^k$ instead of $\partial_{x_{i_1},\ldots,x_{i_k}}^k$, such that:
$\partial_{i_1,\ldots,i_k}:\mathbb{C}\langle x_1,\ldots,x_n\rangle\rightarrow (L^2(W^*(X)))^{\otimes^{(k+1)}}$. 
\begin{remark}
By the results of Dabrowski, Guionnet, Shlyakhtenko \cite{YGS}, we can extend their action on a larger domain as both of theses operator will be unbounded densely defined and closable, provided that the first and second order conjugate variables are bounded (see the important statement number $3$ of lemma 40.3 in \cite{YGS}).
\end{remark}
\begin{flushleft}
We will equip these tensors with their natural $L^2$ inner product which is the following :
\begin{equation}
\langle A,B\rangle_{2}=\sum_{i_1,\ldots,i_k=1}^n \tau^{\otimes (k+1)}(a_{i_1,\ldots,i_k}.b_{i_1,\ldots,i_k}^*),
\end{equation}
When $k=1$, we will denote as usual $\langle A,B\rangle_{HS}$ instead.
\end{flushleft}
\begin{remark}
When we specify a noncommutative distribution, that is a complex valued exponentially bounded (given a real number $R>0$) linear functional from the $*$-algebra of noncommutative polynomials $\mu: \mathbb{P}\rightarrow \mathbb{C}$, which satisfies the following properties:
\begin{enumerate} 
    \item {\it unital:} $\mu(1)=1$,
    \item {\it tracial:} $\mu(PQ)=\mu(QP)$ for any any $P,Q\in \mathbb{P}$,
    \item {\it  positive:} $\mu(PP^*)\geq 0$ for any $P\in \mathbb{P}$,
    \item {\it exponentially bounded:} for any $j_1,\ldots,j_n\in \left\{1,\ldots,n\right\}$, we have $\lvert \mu(t_{j_1}\ldots t_{j_n})\rvert \leq R^n$.
\end{enumerate}
we will denote in short the associated $L^2$-norm on these tensors by setting: 
\begin{equation}
    \langle A,B\rangle_{\mu,k,2}=\sum_{i_1,\ldots,i_k=1}^n \mu^{\otimes (k+1)}(a_{i_1,\ldots,i_k}.b_{i_1,\ldots,i_k}^*),
\end{equation}
\end{remark}
\subsection{Stein kernels associated with Tchebychev polynomials}
\begin{flushleft}

In the following computations, we will show the important role of Tchebychev polynomials, which are the polynomial eigenfunctions of the free Ornstein-Uhlenbeck  operator denoted $L$, i.e $LU_n=-nU_n$, which forms an orthonormal basis of the $L^2(\mathcal{S}(0,1))$, and how they naturally appears in a generalized integration by parts.
We will focus first our arguments in the univariate case ($n=1)$ which is much easier to deal with.
\begin{equation}
    \int U_n(s)U_m(s)\mathcal{S}(0,1)(ds)=\delta_{n,m},
\end{equation}
We also recall that the free Ornstein-Ulhenbeck operator has the following expression where $f$ is a bounded $\mathcal{C}^2$ function:
\begin{equation}
    (Lf)(x)=2\frac{\partial }{\partial x}\int \frac{f(x)-f(y)}{x-y}\mathcal{S}(0,1)(dy)-xf'(x),
\end{equation}
\newline
The Schwinger-Dyson equation in the stronger form (we don't assume that noncommutative polynomials are in gradient form, see the discussion right after definition (1.8) of Fathi and Nelson \cite{FN}) for this standard semicircular case (where $S$ denotes a standard semicircular random variable) and $P\in \mathbb{C}[S]$ is thus:
\begin{equation}
    \langle S,P(S)\rangle_2=\langle 1\otimes 1,\partial P(S)\rangle_{HS},
\end{equation}
The, let's first remark by taking $P\in \mathbb{C}_{s.a}[S]$, and setting $Q=SP$:
\begin{equation}
    \langle S^2,P(S)\rangle_2=\langle 1\otimes 1, \partial Q(S)\rangle_{HS}
\end{equation}
Now it is straightforward to check (and note that the noncommutative derivative is invariant under the "flip" operation, which is the linear extension of the permutation of algebraic tensors products $flip(x\otimes y)=y\otimes x$ for $x,y \in \mathbb{C}[S]$, since $S, P(S)$ trivially commutes), that $\partial Q=flip(\partial Q)=1\otimes P+S.\partial P$, 
\newline
then we use that:
\begin{equation}
    \langle 1\otimes 1, 1\otimes P(S)\rangle_{HS}=\tau(P(S))=\langle 1,P(S)\rangle_2\nonumber
\end{equation}
to get:
\begin{equation}
    \langle S^2-1,P(S)\rangle_2=\langle S\otimes 1, \partial P(S)\rangle_{HS}
\end{equation}
we then iterate again at {\it third order} this relation, and we denote again $Q=SP$ :
\begin{equation}
     \langle (S^2-1)S,P(S)\rangle_2=\langle S\otimes 1, \partial Q(S)\rangle_{HS}
\end{equation}
Now again $\partial Q=1\otimes P+S.\partial P$,
\end{flushleft}
Now it suffices to remark since $S$ is centered that:
\begin{equation}
    \langle S\otimes 1,1\otimes P(S)\rangle_2=\tau(S)\tau(P(S))=0 \nonumber
\end{equation}
For the second term, we have:
\begin{equation}
    \langle S\otimes 1,S.\partial P(S) \rangle_{HS}=\langle S^2\otimes 1,\partial P(S)\rangle_{HS}\nonumber
\end{equation}
combining these facts, 
\begin{equation}
\langle S^3-S,P(S)\rangle_2=\langle S^2\otimes 1,\partial P(S)\rangle_{HS}\nonumber
    \end{equation}
Now by substraction on both side by:
\begin{equation}
   \langle S,P(S)\rangle_2=\langle 1\otimes 1,\partial P(S)\rangle_{HS}\nonumber
\end{equation}
we finally get:
\begin{equation}
    \langle S^3-2S,P(S)\rangle_2=\langle (S^2-1)\otimes 1, \partial P(S)\rangle_{HS}
\end{equation}
Iterating the process up to any order, we find the following kind of relation:
\begin{proposition}
Let $S$ a self-adjoint noncommutative random variable, then $S$ is a standard semicircular random variable if and if only for some $n\geq 1$,
\begin{equation}
    \langle U_n(S),P(S)\rangle_2=\langle U_{n-1}(S)\otimes 1, \partial P(S)\rangle_{HS},
\end{equation}
where the polynomials $(U_n)_{n\geq 0}$ are the Tchebychev polynomials of second kind determined by the following recursion relation:
\begin{equation}
\left\{ \begin{aligned} 
  U_0&=1 \\
  U_1&=t\\
   tU_n&= U_{n+1}+U_{n-1}
\end{aligned} \right.
\end{equation}
\end{proposition}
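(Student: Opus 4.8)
The plan is to treat the two implications separately. The ``only if'' direction (semicircular $\Rightarrow$ the identity at every $n\geq 1$) is the inductive upgrade of the explicit low-order computations carried out just above, while the ``if'' direction (the identity at some fixed $n\geq 1$ $\Rightarrow$ $S$ standard semicircular) is the delicate one.

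For the forward direction I would induct on $n$, with $n=1$ and $n=2$ as base cases. The $n=1$ case is the Schwinger--Dyson equation $\langle S,P(S)\rangle_2=\langle 1\otimes 1,\partial P(S)\rangle_{HS}$, which holds because the standard semicircular law is the free Gibbs state of $\frac12 t^2$ (its conjugate variable is $S$). For $n=2$, substitute $SP$ for $P$ in the $n=1$ identity, expand via the Leibniz rule $\partial(SP)=1\otimes P+(S\otimes 1)\partial P$, use $\langle U_0(S)\otimes 1,1\otimes P(S)\rangle_{HS}=\tau(P(S)^*)=\langle 1,P(S)\rangle_2$ and $SU_0=U_1$, and observe that with $SU_1=U_2+U_0$ the two $\langle 1,P\rangle_2$ contributions cancel, leaving the $n=2$ identity. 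The inductive step, for $n\geq 2$: assuming the identity at levels $n-1$ and $n$, substitute $SP$ for $P$ in the level-$n$ identity, expand $\partial(SP)$ as above, and move the factor $S$ across the Hilbert--Schmidt pairing (permissible since $S=S^*$, so the first leg becomes $SU_{n-1}(S)$, a polynomial in $S$). Since $\tau(U_{n-1}(S))=0$ for $n\geq 2$ — just orthonormality of the $U_k$ in $L^2(\mathcal S(0,1))$ against $U_0=1$ — the ``$1\otimes P$'' term disappears and one obtains $\langle SU_n(S),P(S)\rangle_2=\langle SU_{n-1}(S)\otimes 1,\partial P(S)\rangle_{HS}$. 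Applying the recurrence $tU_k=U_{k+1}+U_{k-1}$ to $SU_n$ and $SU_{n-1}$ on the two sides and then subtracting the level-$(n-1)$ identity, the level-$(n+1)$ identity drops out, closing the induction.

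For the converse, suppose the identity holds at a fixed $n\geq 1$ for all $P\in\mathbb C[S]$. I would test against the monomials $P=S^m$, $m\geq 0$: expanding $U_n$ in powers of $t$ and using $\partial S^m=\sum_{i=0}^{m-1}S^i\otimes S^{m-1-i}$, the identity becomes an explicit polynomial recursion among the moments $a_k=\tau(S^k)$, with $a_0=1$ by unitality. For $n=1$ this is the Catalan recursion $a_{m+1}=\sum_{i=0}^{m-1}a_i a_{m-1-i}$, which already forces $a_{2m}=C_m$ and $a_{2m+1}=0$, hence $\mu_S=\mathcal S(0,1)$. For general $n$ the goal is to show that this recursion, combined with the positivity of $\tau$ on $\mathbb C[S]$ and the boundedness of $S$ (so that $\mu_S$ is a compactly supported probability measure), forces the semicircular moments. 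A convenient packaging is to multiply the $m$-th relation by $z^m$ and sum, turning the recursion into an algebraic equation for the moment generating function, equivalently for the Cauchy transform of $\mu_S$; one then checks that the semicircular Cauchy transform is the only solution lying in the class of Cauchy transforms of compactly supported probability measures (analytic off the support, sending the upper half-plane to the lower, with $G(w)\sim 1/w$ at infinity).

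The main obstacle is exactly this last step of the converse. For $n=1$ it is automatic. For $n\geq 2$, however, the monomial recursion does not on its own determine the lowest moments — already at $n=2$ it leaves $\tau(S)$ free and writes all higher moments as polynomials in it — so the positivity and compact-support constraints must be used in an essential way to discard the spurious branches and single out the semicircular solution; I expect the Cauchy-transform reformulation (or, alternatively, showing that the level-$n$ identity entails the level-$1$ one and then invoking that case) to be the technical heart of the argument.
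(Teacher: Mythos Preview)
Your forward direction is exactly the paper's argument: the proposition is stated immediately after the explicit computations for $n=1,2,3$, with the paper simply writing ``Iterating the process up to any order, we find the following kind of relation'' --- i.e., the same induction via the substitution $P\mapsto SP$, the Leibniz rule $\partial(SP)=1\otimes P+(S\otimes 1)\partial P$, and the Tchebychev recursion that you carry out. Your bookkeeping (in particular noting $\tau(U_{n-1}(S))=0$ for $n\geq 2$ to kill the $1\otimes P$ term) makes the induction cleaner than the paper's sketch.

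For the converse, you have actually gone further than the paper: the paper gives \emph{no} argument for the ``if'' direction --- it is asserted in the proposition but not proved anywhere in the text. Your treatment of $n=1$ (the moment recursion is the Catalan recursion, hence forces the semicircular moments) is correct and is the standard way to see that the Schwinger--Dyson equation characterises the law. Your diagnosis for $n\geq 2$ is also accurate: the bare recursion leaves $\tau(S),\ldots,\tau(S^{n-1})$ undetermined, so one genuinely needs positivity/boundedness (or a reduction to the $n=1$ case) to eliminate the spurious one-parameter families. Neither you nor the paper close this gap; the difference is that you have identified it explicitly.
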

From the traciality of the state, that $U_n(S)$ is self-adjoint for all $n\geq 0$ and {\it coassociavity} of the free difference quotient, which is also algebraic $\partial: dom(\partial)\rightarrow dom(\partial)\odot dom(\partial)$, and thus satisfies that:
\begin{equation}
    (\partial\otimes id)\circ \partial=(id \otimes \partial)\circ \partial,
\end{equation}
the relation is also true when "flipped", when $P$ is self-adjoint, i.e for $P\in \mathbb{P}_{s.a}$,
\begin{equation}
    \langle U_n(S),P(S)\rangle_2=\langle 1\otimes U_{n-1}(S), \partial P(S)\rangle_{HS}
\end{equation}
This is exactly the "{\it free version 
}" of the generalized integration by parts characterizing the standard gaussian measure, and involving the Hermite polynomials:
\begin{equation}
    \int H_k(x)f(x)d\gamma(x)=\int H_{k-1}f'(x)d\gamma(x)
\end{equation}
where $f$ is a test function, and the Hermite polynomials are given by $H_k(x)=(-1)^ke^{\frac{x^2}{2}}\frac{d^k}{dx^k}({e^{-\frac{x^2}{2}}})$.
\bigbreak
We can also rewrite the previous relation using higher-order derivatives (see Voiculescu proposition 3.9 in \cite{V}) to get the following, for any $p\in \mathbb{C}[S]$:
\begin{equation}
    \langle U_k(S),p(S)\rangle_2=\langle 1^{\otimes (k+1)} ,\partial^kp(S)\rangle_{2},
\end{equation}
this implies in particular that the higher-order conjugate variables of a semicircular variable are given for all $n\geq 1$ by:
\begin{equation}
    \xi^n(S)=U_n(S)
\end{equation}

Before that, since there is no standard definitions (to our best knowledge) when $n>1$ of multivariate Tchebychev polynomials, we must introduce a definition adapted to our setting, which is the following.
\begin{definition}
We define for $k\geq 1$, and $i_1,\ldots,i_k\in \Iintv{1,n}$, the multivariate Tchebychev polynomial $U_k^{i_1,\ldots,i_k}\in \mathbb{P}$ associated to $i_1,\ldots,i_k$ as the unique noncommutative polynomial such that for a standard semicircular system $S=(S_1,\ldots,S_n)$ (in some tracial $W^*$-probability space $(\mathcal{M},\tau)$), we have:
\begin{equation}
U_k^{i_1,\ldots,i_k}(S_{i_1},\ldots,S_{i_k})=\partial_{(i_1,\ldots,i_k)}^{k,*}\bigg(1^{\otimes (k+1)}\bigg),
\end{equation}
where the iterated free difference quotient are defined with respect to this semicircular system, and we remind that for any choice of $i_1,\ldots, i_k\in \{1,\ldots,n\}$, $\partial_{(i_1,\ldots,i_k)}^k$ is a densely defined unbounded closable operator as it has bounded first and second order conjugate variables.
\newline
We also define the associated $n$-tensor which have these polynomials as coefficients:
\begin{equation}
\bar{U}_k=\left(U_k^{i_1,\ldots,i_k}\right)_{i_1,\ldots,i_k=1}^n,
\end{equation}

\end{definition}
This definition strongly suggests that the first notion of free Stein kernel with respect to the semicircular potential is the following one:
\begin{definition}(Stein kernel associated with Tchebychev polynomials)
We define an free Stein kernel of order $k$ associated with Tchebychev polynomials (in a tracial $W^*$-probability space) and with respect to the standard semicircular potential as an element $\tilde{A}_k$ in $L^2(\mathcal{M}\otimes \mathcal{M}^{op})$, such that the following equality is satisfied for any $P \in \mathbb{P}^n$:
\begin{equation}
\langle \bar{U}_k(X),P(X)\rangle_2=\langle  \tilde{A}_k,[\mathcal{J}P](X)\rangle_{HS},
\end{equation}
\end{definition}
\begin{remark}
Indeed, in dimension $n=1$, if $S$ has the standard semicircular distribution, we would get for all $k\in \mathbb{N}_*$, that $\tilde{\tau}_k=U_{k-1}(S)\otimes 1$ (up to the "{\it flip}" operation), which uniquely determines the semicircular law.
Note also that if $\tilde{A}_1,\ldots,\tilde{A}_k$ exists, then $\tilde{A}_{k+1}$ is recursively determined by these kernels from the recursion of Tchebychev polynomials.
\end{remark}
\subsection{A more interesting definition}
\begin{flushleft}
Let us remark that there is a much more convenient framework to define, study and construct free Stein kernel with respect to the semicircular potential (we also include its homothetic deformations) as proposed by Fathi in \cite{F}. This shifted definitions makes notations and applications much lighter.
\end{flushleft}
\begin{definition}
We define a free Stein kernel of order $k$ with respect to the potential $V$ and denoted $A_k$, as an element of $L^2(\mathbb{T}_{(k+1),n}(\mathcal{M}^{\otimes (k+1)}))$ satisfying the following relation for any $P\in \mathbb{P}^n$:
\begin{equation}
        \langle [\mathcal{D}V](X),P(X)\rangle_{\tau}-(\tau\otimes\tau)\circ Tr(([\mathcal{J}P](X))^*)=\langle A_k,[\mathcal{J}^k P](X)\rangle_{2},
\end{equation}
\end{definition}
In fact, if for some $k\geq 1$, $A_k:=0$, then we are left with the "{\it Schwinger-Dyson}" equation characterizing the free Gibbs state (provide existence and uniqueness of this state). Hence, this notion can in some sense "{\it measure}" how far, a tuple of noncommutative random variables is closed to be a free Gibbs state with respect to some potential. And more interestingly, this definition can be seen as integration-by-parts with respect to some free Gibbs state involving higher-order derivatives of noncommutative polynomials (which are in our context, the space of {\it test-functions}).\newline
Note that the definition we used is coherent with the usual definition free Stein Kernel because we have evidently:
\newline
$A_1=\tilde{A}_1-(1\otimes 1^{op})\otimes I_n$ where $\tilde{A}_1$ is a usual free Stein kernel (according to the definition of Fathi and Nelson \cite{FN}).
\bigbreak
The idea behind this work is motivated by the fact that the classical Schwinger-Dyson equation which is satisfied by the semicircular potential has extension to higher-order, where the higher-order conjugate variable are given by Tchebychev polynomials in these semicircular variables. Note also, that as mentioned by Fathi (see the discussion before the definition 1 in \cite{FN}) it is rather convenient to work with our centered definition compared to the preliminary computations we did before, which seems a little bit harder to manipulate.
\begin{remark}\label{imp}
An important and evident fact to notice is the following, we will use it many times in the sequel. If $A_1,\ldots,A_k$ exists, the for any tuple of noncommutative polynomial $P\in \mathbb{P}^n$:
\begin{equation}
    \langle A_{k-1},[\mathcal{J}^{k-1}P](X)\rangle_{2}=\langle A_{k},[\mathcal{J}^{k}P](X)\rangle_{2},
\end{equation}
\end{remark}

Now we are able to define a quantity which will be crucial for establishing free functional inequalities. This notion will characterize in fact how a tuple of {\it n.c} random variables is close to be a free Gibbs state with respect to these potential $V_{\rho}$.
\begin{definition}
The $k$-order free Stein discrepancy is then defined as follows:
\begin{equation*}
\Sigma_k ^*(X|V)=\inf_{A_k}\lVert A_k\rVert_2,
\end{equation*}
where the infinimum is taken over free Stein kernel $A_k$ of order $k$.
\end{definition}

\section{Existence of higher-order free Stein kernels and moment constraints}
The reader might wonder why we restrict our exposition to semicirculars potentials, as the definition seems well adapted to every free Gibbs state.
The main issues to produce a paper with such a level of generality are listed in the following:
\begin{enumerate}
    \item Firstly, one are still unable to even deduce a general bound for the relative free Fisher information along the free Langevin semigroup in terms of the relative free Stein discrepancy for a general self-adjoint potential (even not too far from the semicircular distribution) , that is $V\in \mathbb{P}$ is bounded from above and below, i.e such that there exists $0<c\leq C$ such that $c.(1\otimes 1)\otimes I_n\leq \mathcal{J}\mathcal{D}V\leq C.(1\otimes 1)\otimes I_n$ holds in $M_n(\mathbb{P}\otimes \mathbb{P}^{op})$.
    \newline
    Indeed, if we denote: (here we only consider a noncommutative polynomial $f$, but there has been a lot of progress in defining various analogs of noncommutative smooth functions due to Guionnet, Dabrowski, Jekel, Shlyakhtenko, Gangbo, Li, Nam\ldots in \cite{YGS,JSG, JS2}).
    \newline
    $P_t(f)(X_0)=\tau(f(X_t)|X_0)$ (the free semigroup) associated with the following free diffusion:
    \begin{equation}\label{37}
        X_t=X_0-\int_0^t[DV](X_s)ds+\sqrt{2}dS_t,
    \end{equation}
    We conjecture, following the impressive work of Cheng, Thalmaier and Fang \cite{Cheng}, which have provided a complete answer generalizing the Ledoux, Nourdin and Peccati estimates for the gaussian measures on Euclidean spaces, in the more general setting of complete connected Riemannian manifold $M$, for measures of the form $d\mu=e^{-V}dVol$ where $Vol$ denotes the Riemannian volume form, associated with a smooth potential $V\in \mathcal{C}^2(M)$ which satisfies $Ric_V=Ric-Hess_V\geq K$ where $K$ is a positive constant (see statements in theorem 3.2, corollary 3.3 and theorem 3.5 in \cite{Cheng}), that if such a bounds exists, it should be in the following form, for all $t>0$:
    \begin{equation}
    \Phi^*(X_t|V)^{\frac{1}{2}}\leq C\frac{e^{-Kt}}{\sqrt{1-e^{-2Kt}}}\Sigma^*(X_0|V),
    \end{equation}
    where $C$ is a positive constant and $K$ should be the smallest positive constant such that for all $i,j=1,\ldots,n$:
    \begin{equation}
    \partial_jD_iV\geq K.1\otimes 1^{op}, 
    \end{equation}
Where this last inequality holds in the sense of operator in $\mathbb{P}\otimes\mathbb{P}^{op}$.

    In fact, even in the simplest one dimensional case, it seems for now out of reach. This lack of result is an obstruction to produce the more general version of the free {\it HSI} inequality. It mainly works in the semicircular case because we have a very simple representation of the semigroup (a Mehler type formula).
    \bigbreak
    In fact, and again, the crucial point is that the Tchebychev polynomial (resp. homogeneous Wigner chaos in the infinite dimensional setting) forms an an orthogonal basis of polynomials eigenvalues of the free Ornstein-Uhlenbeck operator.
    \newline 
    Indeed, when $V=\frac{1}{2}\sum t_i^2$ (standard semicircular case), we have in distribution for \ref{37}, and for any $t\geq 0$, $X_t\simeq e^{-t}X_0+\sqrt{1-e^{-2t}}S$ where $S$ is a $n$-standard semicircular system  supposed to be free with $X_0$.
    \item The second item is related to the original proof of the {\it HSI} inequalities by Ledoux, Nourdin and Peccati \cite{LNP} which have strengthened their result for the gaussian measure, by proving such inequalities with respect to invariant measure of general diffusion processes, which satisfy some additional curvature dimension criterion. Indeed, it is well known that in the classical case that the Bakry and Emery criterion $\Gamma_2\geq k\Gamma$ guarantees that a measure satisfies log-Sobolev. However, it is not sufficient to get {\it HSI}. Indeed it requires at least a criterion of the following type $\Gamma_3\geq K\Gamma_2$ for some $K\geq 0$, which is very hard to check in practice, except for Gaussian, Gamma or Uniform distributions (see, e.g, sections $4.2, 4.3$, and $4.4$ in \cite{LNP}). In free probability, there does not even exist a proof of the logarithmic Sobolev inequality via an appropriated notion of $\Gamma$ operators, which makes difficult to prove it except for semicircular potentials. For further details on this topic, the interesting reader could read the discussion of section 11 in \cite{Di}.
\end{enumerate}
\begin{flushleft}
We will only prove the existence for the potential $V_p$ introduced preciously.
Before showing the existence of such higher-order Stein kernels, we will see that their existence necessary implies moments constraints as Fathi \cite{F} proved it in the classical Gaussian case. In other words, existence of higher-order Stein kernel, let's say of order $k$, necessarily implies that the tuple of noncommutative random variables has its moments of order less than or equal to $k$ that matches the ones of a semicircular system of covariance $C=\rho^{-1} I_n$. 
\end{flushleft}
\section{One dimensional case}
\begin{lemma}\label{lma11}
Suppose that $X$ is a self-adjoint operator in $(\mathcal{M},\tau)$, and admits higher-order free Stein kernel up to order $k\geq1$ with respect to the potential $V_{\rho}$, then we have $\tau(P(X))=\tau(P(S_\rho))$ for any $P\in \mathbb{C}_k[t]$. Moreover, if the moment assumption holds for noncommutative polynomials of order $k+1$, these kernels are centered:
$\tau^{\otimes{(k+1)}}(A_k)=0$.
\end{lemma}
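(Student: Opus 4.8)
The plan is to imitate Fathi's classical argument \cite{F}: in the Gaussian case one tests the order-$k$ Stein identity against a polynomial of degree $\le k-1$, for which the $k$-th derivative vanishes, so the ordinary integration by parts holds exactly on such test functions and forces the first $k$ moments to match the Gaussian ones. The free counterpart rests on the elementary combinatorial identity $\partial^{k}t^{n}=\sum_{i_{0}+\cdots+i_{k}=n-k}t^{i_{0}}\otimes\cdots\otimes t^{i_{k}}$, which vanishes when $n\le k-1$ and equals $1^{\otimes(k+1)}$ when $n=k$ (the coefficient is checked by one application of $\partial\otimes\mathrm{id}^{\otimes k}$ to $\partial^{k-1}t^{k}=\sum_{j=0}^{k-1}1^{\otimes j}\otimes t\otimes 1^{\otimes(k-1-j)}$). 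One also uses that in dimension one $[\mathcal{D}V_{\rho}](X)=\rho X$, $[\mathcal{J}^{j}P](X)=\partial^{j}P(X)$, and $(\tau\otimes\tau^{op})\circ\mathrm{Tr}$ is simply $\tau\otimes\tau^{op}$, so that the defining identity of the order-$k$ kernel $A_{k}$ against a real monomial $P=t^{j}$ reads $\rho\,\tau(X^{j+1})-\sum_{a+b=j-1}\tau(X^{a})\tau(X^{b})=\langle A_{k},\partial^{k}t^{j}(X)\rangle_{2}$.

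First step: take $P=t^{j}$ with $0\le j\le k-1$. Then $\partial^{k}t^{j}=0$, so the right-hand side vanishes and we obtain the exact recursion $\rho\,\tau(X^{j+1})=\sum_{a+b=j-1}\tau(X^{a})\tau(X^{b})$ for $j=0,\dots,k-1$. This is precisely the moment recursion of the semicircular element $S_{\rho}$ of variance $\rho^{-1}$: for $j=0$ it gives $\tau(X)=0$, for $j=1$ it gives $\tau(X^{2})=\rho^{-1}$, and in general it reproduces the rescaled Catalan recursion. An induction on $j$ (using that $S_{\rho}$ satisfies its own Schwinger--Dyson relation, so the right-hand side, which only involves moments of order $\le j-1$, already equals $\rho\,\tau(S_{\rho}^{j+1})$) yields $\tau(X^{j})=\tau(S_{\rho}^{j})$ for $0\le j\le k$, i.e. $\tau(P(X))=\tau(P(S_{\rho}))$ for every $P\in\mathbb{C}_{k}[t]$.

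Second step (the ``moreover''): take now $P=t^{k}$. Here $\partial^{k}t^{k}=1^{\otimes(k+1)}$, so the identity becomes $\rho\,\tau(X^{k+1})-\sum_{a+b=k-1}\tau(X^{a})\tau(X^{b})=\langle A_{k},1^{\otimes(k+1)}\rangle_{2}=\tau^{\otimes(k+1)}(A_{k})$. By the first step every moment in the sum coincides with the corresponding moment of $S_{\rho}$, so the sum equals $\rho\,\tau(S_{\rho}^{k+1})$ and hence $\tau^{\otimes(k+1)}(A_{k})=\rho\big(\tau(X^{k+1})-\tau(S_{\rho}^{k+1})\big)$. Under the additional hypothesis that the moments of order $k+1$ also match (equivalently $\tau(X^{k+1})=\tau(S_{\rho}^{k+1})$), this is zero, which gives $\tau^{\otimes(k+1)}(A_{k})=0$.

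The argument is essentially bookkeeping, so there is no serious obstacle; the only points requiring a little care are the faithful translation of the centered Stein identity into the one-dimensional ``measure'' language (the cyclic gradient of $V_{\rho}$ being $\rho X$, the trace disappearing, and the self-adjointness making all conjugations harmless), the identity $\partial^{k}t^{n}=\sum_{i_{0}+\cdots+i_{k}=n-k}t^{i_{0}}\otimes\cdots\otimes t^{i_{k}}$, and the recognition that $\rho\,m_{j+1}=\sum_{a+b=j-1}m_{a}m_{b}$ is exactly the semicircular moment recursion.
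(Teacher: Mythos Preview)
Your proof is correct and follows essentially the same approach as the paper: test the order-$k$ Stein identity against monomials $t^{j}$, exploit that $\partial^{k}t^{j}=0$ for $j\le k-1$ to recover the exact semicircular moment recursion, and then plug in $t^{k}$ (where $\partial^{k}t^{k}=1^{\otimes(k+1)}$) to isolate $\tau^{\otimes(k+1)}(A_{k})$. The only organizational difference is that the paper packages this as an induction on $k$ (using the kernel at each successive level), whereas you use the single order-$k$ identity against all $t^{j}$ with $j\le k-1$ at once; this is a cosmetic streamlining rather than a new idea.
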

\begin{proof}
Let us first explain, the one dimensional case $n=1$, and by homogeneneity (its just dilate the equations by a factor $\rho^{-1}$), we only consider the case $\rho=1$. We denote here respectively $X,S$ a self-adjoint noncommutative random variable and a standard semicircular random variable in some tracial $W^*$-probability space.
\bigbreak
The case $k=1$ is evident as the existence of a free Stein kernel neceassrily implies by Fathi, Cébron and Mai result in \cite{FCM} that $\tau(X)=0=\tau(S)$.
\newline
Now, if we also suppose that the condition $\tau(X^2)=\tau(S^2)=1$ is satisfied, then from the free Stein identity involving a $1$st order free Stein kernel (our shifted definition), we have:
\begin{eqnarray}
    \tau(X^2)&=&\tau(X.X)\nonumber\\
    &=& \tau(1\otimes 1)+\tau\otimes \tau(A_1.(\partial_X X))\nonumber\\
    &=& 1+\tau\otimes \tau(A_1)
\end{eqnarray}
and $A_1$ is centered, i.e $\tau\otimes \tau(A_1)=0$
\bigbreak
For $k=2$, let's suppose that a Stein kernel of order $2$ denoted $A_2$ exists, we then compute as follows:
\begin{eqnarray}
\tau(X^2)&=&\tau(X.X)\nonumber\\
&=&\tau(1\otimes 1)+\tau^{\otimes 3}(A_2.(\partial^2_X X))\nonumber\\
&=&\tau\otimes\tau(1\otimes 1)\nonumber\\
&=&1,
\end{eqnarray}
Then, if we assume that the third moment matches zero (the third moment of the semicircular distribution), i.e $\tau(X^3)=\tau(S^3)=0$, then from the $2$-th order free Stein identity:
\begin{eqnarray}
\tau(X^3)&=&\tau(X.X^2)\nonumber\\
&=&\tau\otimes\tau(X\otimes 1+1\otimes X)+\tau^{\otimes 3}(A_2.(\partial^2_X X^2))\nonumber\\
&=&\tau^{\otimes 3}(A_2)
\end{eqnarray}
Which trivially gives that:
\begin{equation}
    \tau^{\otimes 3}(A_2)=0,
\end{equation}
Thus the basic idea is that when applied to a monomial of order less than or equal to $1\leq l<k$, the second term vanishes, indeed since $\partial^{k}t^l=0$. Moreover if the moment condition holds true at the order $k+1$, we will see that the kernel is necessarily centered as $\partial^{k}t^k=1^{\otimes (k+1)}$.
\newline
By recursion, let´s suppose that the result holds true for $\tau(X^l)=\tau(S^l), 1\leq l\leq k$, then since we suppose that a free Stein kernel of order $k$ exists, we have from the induction hypothesis:
\begin{eqnarray}
    \tau(X^{k+1})&=&\tau(X.X^k)\nonumber\\
   & =&\tau\otimes \tau\bigg(\sum_{j=1}
^{k} X^{j-1}\otimes X^{k-j}\bigg)+\tau^{\otimes(k+1)}( A_k.(\partial^{k+1}_X X^{k}))\nonumber\\
&=&\tau\otimes \tau\bigg(\sum_{j=1}
^{k} S^{j-1}\otimes S^{k-j}\bigg)\nonumber\\
&=&\tau(S^{k+1}),
\end{eqnarray}
\bigbreak
Again, if we assume that $\tau(X^{k+2})=\tau(S^{k+2})$ (it is here that we use the assumption of moments constraints over polynomials of order $k+2$), then by using a stein kernel of order $k+1$ denoted $A_{k+1}$, we necessarily have :
\begin{eqnarray}
    \tau(X^{k+2})&=&\tau(X.X^{k+1})+\tau^{\otimes (k+2)}(A_{k+1}.(\partial_X^{k+1} X^{k+1}))\nonumber\\
    &=& \tau\otimes \tau\bigg(\sum_{j=1}
^{k+1} X^{j-1}\otimes X^{k+1-j}\bigg)+\tau^{\otimes (k+2)}(A_{k+1})\nonumber\\
&=& \tau\otimes \tau\bigg(\sum_{j=1}
^{k+1} S^{j-1}\otimes S^{k+1-j}\bigg)+\tau^{\otimes (k+2)}(A_{k+1})\nonumber\\
&=& \tau(S^{k+2})+\tau^{\otimes (k+2)}(A_{k+1}),
\end{eqnarray}
\bigbreak
Which easily implies that:
\begin{equation}
\tau^{\otimes(k+2)}(A_{k+1})=0,
\end{equation}
\end{proof}
\qed

\section{Multi-dimensional case}
\begin{flushleft}
We recall here that the joint law of a free $(0,\rho^{-1})$-semicircular n-tuple denoted as $S_{\rho}=(S_{\rho}^1,\ldots,S_{\rho}^n)$ is a free Gibbs state with potential $V_p $.
\end{flushleft}
\begin{lemma}\label{lma1}
Suppose that $X$ a $n$-tuple of self-adjoint operators in $(\mathcal{M},\tau)$ admits higher-order free Stein kernels up to order $k\geq1$, then (since moments of any order always exists), we have $\tau(P(X))=\tau(P(S_\rho))$ for any noncommutative polynomials in $n$ variables of degree $l\leq k$. Moreover, if the relation holds for noncommutative polynomials of order $k+1$, these kernels are centered: for any choice of, $i_1,\ldots,i_{k+1} \in \{1,\ldots,n\}$,
\begin{equation}
\tau^{\otimes{(k+1)}}((A_k)_{i_1,\ldots,i_{k+1}})=0.
\end{equation}
\end{lemma}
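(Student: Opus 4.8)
The plan is to carry the one-dimensional argument of Lemma~\ref{lma11} over to noncommutative words, the only genuinely new feature being the multi-index combinatorics carried by $\mathcal{J}^k$. By homogeneity we may take $\rho=1$; recall that $[\mathcal{D}_jV_1](X)=x_j$ and that the standard semicircular family $S=(s_1,\dots,s_n)$ is the free Gibbs state for $V_1$, so it satisfies the Schwinger--Dyson equation $\langle[\mathcal{D}V_1](S),P(S)\rangle_2=\langle(1\otimes1^{op})\otimes I_n,[\mathcal{J}P](S)\rangle_{HS}$ for all $P\in\mathbb{P}^n$.

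First I would isolate the two elementary facts about the iterated free difference quotient that do all the work: (i) $\partial^k_{(i_1,\dots,i_k)}q=0$ as soon as $\deg q<k$; and (ii) on a monomial of degree exactly $k$ one has $\partial^k_{(i_1,\dots,i_k)}(t_{j_1}\cdots t_{j_k})=\delta_{(i_1,\dots,i_k),(j_1,\dots,j_k)}\,1^{\otimes(k+1)}$. Both follow by induction on $k$ from the Leibniz rule for $\partial_j$ and the definition $\partial^k_{(i_1,\dots,i_k)}=(\partial_{i_1}\otimes\mathrm{id}^{\otimes k})\circ\cdots\circ\partial_{i_k}$, just as the scalar identities $\partial^2t^2=1^{\otimes 3}$ and $\partial^kt^k=1^{\otimes(k+1)}$ entered the one-dimensional case.

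The first assertion is proved by induction on the degree. Given a word $w=t_{j_0}t_{j_1}\cdots t_{j_\ell}$ with $\ell+1\le k$, apply the order-$k$ Stein identity to the tuple $P\in\mathbb{P}^n$ whose $j_0$-th entry is the reversed monomial $t_{j_\ell}\cdots t_{j_1}$ and all of whose other entries vanish. Since $\deg P_{j_0}=\ell\le k-1<k$, fact~(i) makes $\langle A_k,[\mathcal{J}^kP](X)\rangle_2=0$, so the order-$k$ identity collapses to the Schwinger--Dyson equation for $V_1$ tested against this $P$, which, after using traciality to tidy up adjoints, reads
\[
\tau\big(x_{j_0}x_{j_1}\cdots x_{j_\ell}\big)=\sum_{p:\,j_p=j_0}\tau\big(x_{j_1}\cdots x_{j_{p-1}}\big)\,\tau\big(x_{j_{p+1}}\cdots x_{j_\ell}\big).
\]
The right-hand side only involves moments of words strictly shorter than $w$; running the same identity for $S$ (which satisfies it because $S$ is the free Gibbs state of $V_1$) and invoking the inductive hypothesis on those shorter words gives $\tau(w(X))=\tau(w(S))$. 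Extending $\mathbb{C}$-linearly over all monomials of degree $\le k$ yields the claim; the cases $\ell=0,1$ are the empty-sum instances, the latter also following at once by taking $P_{j_0}$ constant.

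For the ``moreover'' part, assume in addition that the moments of $X$ up to degree $k+1$ match those of $S$, and repeat the computation on a word $w=t_{j_0}t_{j_1}\cdots t_{j_k}$ of degree exactly $k+1$ with $P_{j_0}=t_{j_k}\cdots t_{j_1}$, now of degree exactly $k$. This time fact~(ii) evaluates $\langle A_k,[\mathcal{J}^kP](X)\rangle_2$ to the single component $\tau^{\otimes(k+1)}\big((A_k)_{j_1,\dots,j_k;\,j_0}\big)$, while the remaining part of the Stein identity reads $\tau(x_{j_0}\cdots x_{j_k})-\sum_{p:\,j_p=j_0}\tau(x_{j_1}\cdots x_{j_{p-1}})\,\tau(x_{j_{p+1}}\cdots x_{j_k})$. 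Under the added hypothesis every moment appearing there agrees with the corresponding semicircular one, so this expression equals the left-hand side of the semicircular Schwinger--Dyson relation, which is $0$; hence $\tau^{\otimes(k+1)}\big((A_k)_{j_1,\dots,j_k;\,j_0}\big)=0$, and since $j_0,\dots,j_k$ are arbitrary this exhausts every component of $A_k$, proving centering. I do not expect a genuine obstacle: the only steps that require real care are the combinatorial identity~(ii) and the adjoint/traciality bookkeeping needed to pass from the abstract order-$k$ Stein identity to statements about individual joint moments of the $x_j$.
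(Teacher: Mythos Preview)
Your proposal is correct and follows essentially the same route as the paper's proof: induction via testing the order-$k$ Stein identity against tuples $P$ with a single monomial entry, using that $\partial^k$ annihilates monomials of degree $<k$ and sends a degree-$k$ monomial to $\delta\cdot 1^{\otimes(k+1)}$, and then matching the resulting recursion with the semicircular Schwinger--Dyson relation. Your exposition is in fact more explicit than the paper's (which largely defers to the univariate Lemma~\ref{lma11}), and your observation that only $A_k$ is needed---not the full chain $A_1,\dots,A_k$---is a mild sharpening; the only residual care is in the index-ordering convention for $(A_k)_{j_0,j_1,\dots,j_k}$ versus $(A_k)_{j_1,\dots,j_k;j_0}$, which as you note is immaterial since the $j_0,\dots,j_k$ are arbitrary.
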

\begin{proof}
We proceed again by induction,
\bigbreak
For $k=1$, it reduces to the simple condition $\tau([\mathcal{D}V_\rho](X))=\tau([\mathcal{D}V_\rho](S_\rho))=(0,\ldots,0)$, and since $[\mathcal{D}V_\rho](X)=\rho X$, it's obviously true as pointed out by \cite{FCM} as a necessary and sufficient condition for a Stein kernel to exist is that $ \tau([\mathcal{D}V_{\rho}](X))=(0,\ldots,0)$.
\bigbreak
To also obtain the second claim: the first order free Stein kernel is centered (according to our definition), it is sufficient to check the Schwinger-Dyson equation onto the coordinates $t_1,\ldots,t_n$, which is straightforward to verify.
\bigbreak
Now let's suppose the assumption holds for some $k\geq1$,
then by linearity and the induction assumption we can check on monomials of degree $k$.
\bigbreak
The proof that the existence of higher-order Stein kernel of order $k$ implies that the moments of order $1\leq l\leq k$ matches with the ones of a semicircular tuple of covariance $\rho^{-1}I_n$ is then straightforward (see the proof in the univariate case).
\bigbreak
We now focus on the second part of this lemma, which ensures that a free Stein kernel of order $k$ is necessarily centered, which will be a fundamental fact.
\bigbreak
Let's take $P=(0,\ldots,p,\ldots,0)$ with $p$ of degree $k$ at the index $i_{0}$, for an arbitrary $i_{0}\in \{1,\ldots,n\}$. In fact, by linearity one can assume that $p=\rho^{-1}t_{i_k}t_{i_{k-1}}\ldots t_{i_1}$ with $i_1,\ldots,i_k \in \{1,\ldots,n\}$. We then have:
\begin{equation}
    \partial^k_{(j_k,\ldots,j_1)} p=\rho^{-1}.1^{\otimes (k+1)},
\end{equation} if $(j_k,\ldots,j_1)=(i_k,\ldots,i_1)$ and trivially vanishes in the other cases.
\newline
This gives us:
\begin{equation}
    \langle [\mathcal{D}V_{\rho}](X),P(X)\rangle_{\tau}=\tau(x_{i_{0}}x_{i_1}x_{i_2}\ldots x_{i_k}),
\end{equation}
and by the assumption that a higher-order free Stein kernel of order $k$ does exist, we have:
\begin{align*}
      &\tau(x_{i_{0}}x_{i_1}x_{i_2}\ldots x_{i_k})\nonumber\\
      &=\langle A_k,[\mathcal{J}^kP](X)\rangle_2+(\tau\otimes\tau)\circ Tr(([\mathcal{J}P](X))^*)\nonumber\\
     &=\rho^{-1}\tau^{\otimes(k+1)}(A_{k_{(i_{k},\ldots,i_0)}})+\rho^{-1} (\tau\otimes\tau) \bigg(\sum_{l=1}^k\prod_{j\leq l-1} X_{j_l}\otimes\prod_{j>l}X_{j_l}\bigg)\nonumber\\
      &=\rho^{-1}\tau^{\otimes(k+1)}(A_{k_{(i_{k},\ldots,i_0)}})+\rho^{-1}(\tau\otimes\tau) \bigg(\sum_{l=1}^k \prod_{j\leq l-1} S_{\rho}^{j_l}\otimes \prod_{j>l}S_{\rho}^{j_l}\bigg),
     \end{align*}
Where we have used the induction assumption to match the second term, moreover since for the free Gibbs state $V_{\rho}$ the left hand and the second term of the right hand matches, the induction on monomials is proved.
\bigbreak
Moreover, we see that if the moment condition at the order $k+1$ holds true,
since \begin{equation}
\rho^{-1}(\tau\otimes\tau) \bigg(\sum_{l=1}^k\prod_{j\leq l-1} S_{\rho}^{j_l}\otimes \prod_{j>l}S_{\rho}^{j_l}\bigg)=\tau(S^{i_0}_{\rho}\ldots S^{i_k}_{\rho}),
\end{equation}
it implies for any arbitrary choice of $i_0,\ldots,i_{k}$ that (note that we shifted the indexes from $0$ to $1$ to avoid confusions):
$\tau^{\otimes(k+1)}((A_k)_{i_1,\ldots,i_{k+1}})$=0, for every choice of $i_1,\ldots,i_{k+1} \in \{1,\ldots,n\}$.
\newline
We could also notice that the traciality of the state already implies that if the coefficient of the free Stein kernels $\tau^{\otimes(k+1)}((A_k)_{i_1,\ldots,i_{k+1}})=0$, then it is also the case for any cyclic permutation of $i_0,\ldots,i_{k}$.
\end{proof}
\qed
\begin{flushleft}
Now, as in the classical case, we associate to this noncommutative distribution $\mu$ some natural definitions of {\it noncommutative Sobolev spaces}.
When we specialize $\mu(P):=\tau(P(X))$ as the distribution of some tuple $X=(x_1,\ldots,x_n)$ of self-adjoint variables in some tracial $W^*$-probability space, we can think about it as the Sobolev spaces of order "$k$" associated to the distribution of a $n$-tuple of {\it n.c} random variables. We will voluntarily omit to denote the exponent $2$, since we are only focus with $L^2$-closure.
    
\end{flushleft}
\begin{definition}
For any noncommutative distribution $\mu : \mathbb{P} \rightarrow \mathbb{C}$, we associate its following (variant) Sobolev space of order $k$, denoted $H^k(\mu)$ defined as the separation completion of noncommutative polynomials $P\in \mathbb{P}^n$, with respect to the following seminorm:
\begin{eqnarray}
    \lVert P\rVert_{H^k(\mu)}^2:=\lVert \mathcal{J}^kP\rVert_{\mu,k,2}^2,
\end{eqnarray}
\end{definition}
\begin{flushleft}
The main idea that we will use in the following is the free variant of the Poincaré inequality. This last one was proved by Voiculescu in an unpublished note (see the remark of Dabrowski, lemma 2.2 in \cite{DAB}), and it is always satisfied for any choice of self-adjoint tuple of operators (this is a very different fact from what happens in the classical case). For reader convenience, we restate these inequalities.
\end{flushleft}

\begin{definition}(Free Poincaré inequality, Voiculescu)
 A self-adjoint $n$-tuple $X$ satisfies a free Poincaré inequality with constant $C$ if for any noncommutative polynomial $p \in \mathbb{P}$ we have :
 \begin{equation}
 \lVert p(X)-\tau(p(X)) \rVert_2^2 \leq C \sum_{i=1}^n \lVert \partial_i p(X)\rVert_2^2,
 \end{equation}
 The optimal constant verifying this inequality will be denoted by $C_{opt}$.
\end{definition}
\begin{theorem}(Voiculescu, Fathi and Nelson, prop 3.2 in \cite{FN})
Let $X = (x_1,\ldots ,x_n)$ a $n$-tuple of self-adjoint operators. Then, for any self-adjoint polynomial $p \in \mathbb{P}$, we have:
\begin{equation}
     \lVert p(X)-\tau(p(X)) \rVert_2^2 \leq 2n\lVert X \rVert ^2 \sum_{i=1}^n \lVert \partial_i p(X)\rVert_2^2,
\end{equation}

\end{theorem}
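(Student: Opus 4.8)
The plan is to deduce this free Poincaré inequality from a single algebraic ``fundamental theorem of calculus'' identity for the free difference quotients, followed by a few elementary estimates on the Hilbert space $L^2(W^*(X)\otimes W^*(X)^{op})$. First I would establish, by a telescoping computation on monomials and then linearity, the identity
\[
p(X)\otimes 1 - 1\otimes p(X) \;=\; \sum_{i=1}^{n}\Theta_i(p),
\]
valid in $L^2(W^*(X)\otimes W^*(X)^{op})$, where, writing $\partial_i p(X)=\sum_{\alpha} a_{i,\alpha}\otimes b_{i,\alpha}$, we set $\Theta_i(p):=\sum_{\alpha}\big(a_{i,\alpha}x_i\otimes b_{i,\alpha}-a_{i,\alpha}\otimes x_i b_{i,\alpha}\big)$. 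Indeed, for a monomial $w=x_{j_1}\cdots x_{j_d}$ one has the telescoping decomposition
\[
w\otimes 1-1\otimes w=\sum_{k=1}^{d}\Big[(x_{j_1}\cdots x_{j_k})\otimes(x_{j_{k+1}}\cdots x_{j_d})-(x_{j_1}\cdots x_{j_{k-1}})\otimes(x_{j_k}\cdots x_{j_d})\Big],
\]
and grouping the index $k$ according to the value $i=j_k$ reassembles exactly $\sum_i\Theta_i(w)$; linearity then gives the identity for all $p$.

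Next I would apply the (trace-preserving) partial traces. Applying $\mathrm{id}\otimes\tau$ gives $p(X)-\tau(p(X))\,1=\sum_i(\mathrm{id}\otimes\tau)\Theta_i(p)$, while applying $\tau\otimes\mathrm{id}$ gives $\tau(p(X))\,1-p(X)=\sum_i(\tau\otimes\mathrm{id})\Theta_i(p)$ (after the canonical identification $L^2(W^*(X)^{op})\cong L^2(W^*(X))$). Averaging the two, with $T:=\mathrm{id}\otimes\tau-\tau\otimes\mathrm{id}$,
\[
p(X)-\tau(p(X))\,1=\tfrac12\sum_{i=1}^{n}T\big(\Theta_i(p)\big).
\]
Now I use the following norm bounds: (i) $T$ satisfies $\|T\|\le\sqrt 2$, since $T^\ast a=a\otimes 1-1\otimes a$ has $\|T^\ast a\|_2^2=2\|a\|_2^2-2|\tau(a)|^2\le 2\|a\|_2^2$; (ii) right multiplication on the first leg by $x_i$ and left multiplication on the second leg by $x_i$ are bounded on $L^2$ by $\|x_i\|\le\|X\|$, so $\|\Theta_i(p)\|_2\le 2\|X\|\,\|\partial_i p(X)\|_2$. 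Combining these with the triangle inequality over $i$ and then the Cauchy--Schwarz inequality $\sum_i\|\partial_i p(X)\|_2\le\sqrt{n}\,(\sum_i\|\partial_i p(X)\|_2^2)^{1/2}$,
\[
\|p(X)-\tau(p(X))\|_2\le \tfrac{\sqrt 2}{2}\cdot 2\|X\|\sum_{i=1}^n\|\partial_i p(X)\|_2\le \sqrt{2n}\,\|X\|\Big(\sum_{i=1}^{n}\|\partial_i p(X)\|_2^2\Big)^{1/2},
\]
and squaring yields the stated inequality.

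The routine parts are the telescoping identity and the multiplication-norm bounds. The one genuinely delicate point is obtaining the sharp constant $2n\|X\|^2$ rather than the cruder $4n\|X\|^2$ that a single partial trace would give: this forces the use of the antisymmetrized operator $T$, whose operator norm is $\sqrt 2$ instead of $2$. (Self-adjointness of $p$ is not actually needed for the argument, only that the $x_i$ are bounded, and no closure/density step is required since the statement is for polynomials.)
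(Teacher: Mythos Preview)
The paper does not supply its own proof of this statement; it merely quotes the result from Fathi--Nelson (Prop.~3.2 in \cite{FN}), which in turn records Voiculescu's unpublished argument (cf.\ Dabrowski \cite{DAB}, Lemma~2.2). Your proof is correct and follows essentially the same route as those references: one writes the algebraic identity $p(X)\otimes 1-1\otimes p(X)=\sum_i(\partial_i p)(X)\cdot(x_i\otimes 1-1\otimes x_i)$ (your $\Theta_i$), then contracts with a partial trace and bounds the multiplication operators by $\lVert X\rVert$.

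The one point where your argument differs from the cited proofs is the use of the antisymmetrised contraction $T=\mathrm{id}\otimes\tau-\tau\otimes\mathrm{id}$ with $\lVert T\rVert\le\sqrt{2}$, rather than a single partial trace (norm $1$) together with the self-adjointness of $p$. In Fathi--Nelson the factor $2$ (instead of $4$) is obtained by exploiting that for $p=p^\ast$ the element $\partial_i p$ is invariant under the flip-plus-adjoint symmetry, which lets one average two estimates; this is why the paper's Remark immediately after the theorem says the constant degrades to $4$ for general $p$. Your computation shows that this degradation is in fact unnecessary: the $\sqrt{2}$ bound on $T$ already yields $2n\lVert X\rVert^2$ for \emph{all} polynomials, self-adjoint or not. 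So your final parenthetical remark is justified, and in this respect your proof is slightly sharper than what the paper records.
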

\begin{remark}
In full generality, if $p$ is not self-adjoint, the constant $2$ is replaced by $4$. Moreover, when $X$ is a standard semicircular system, it has been shown by Biane that the best possible constant is given in any dimension by $C_{opt}=1$ (which is the same constant as in the classical gaussian Poincare inequality) .
\end{remark}
We also set, to avoid the notation $\lVert .\rVert_{\mu,k,2}$ (when the context is clear: the law $\mu$ of $n$-tuple of self-adjoint elements $X$ is fixed):
\begin{equation}
    \langle P,Q\rangle_{H^k(\mu)}=\langle [J^kP](X),[J^kQ](X)\rangle_{2},
\end{equation}
We will also impose some additional conditions about the noncommutative distribution $\mu$ (and which are usual assumptions in the theory of Dirichlet forms).
\newline
\textbf{Assumptions:}
\begin{enumerate}

\item The tuple of noncommutative random variables $X$ satisfies a generalized Poincaré inequality up to order $k$. That is for all $l\leq k$, and any choice of $i_1,\ldots,i_l \in \left\{1,\ldots,n\right\}$, $\exists\:C_l$ a constant, depending only on $X$ and on $l$ (and independent of the choice of $i_1,\ldots,i_l$), such that for all $p\in \mathbb{P}$:
\begin{align*}
  \bigg \lVert \partial_{i_1,\ldots,i_l}^lp(X)-\tau^{\otimes (l+1)}(\partial_{i_1,\ldots,i_l}^lp(X)).1^{\otimes (l+1)}\bigg\rVert_2^2\leq C_l(l+1)
\sum_{i_{l+1}=1}^n\bigg\lVert \partial_{i_1,\ldots,i_{l+1}}^{(l+1)}p(X)\bigg\rVert^2_2,
\end{align*}
We also denote in the sequel $C_{\max}=\underset{1\leq l\leq k}{\max C_l}$.
\newline
The sequence of Sobolev spaces $(H^j(\mu))_{j\leq k}$ is thus decreasing as a consequence of these free Poincaré inequalities.
\item The Dirichlet form:
\begin{eqnarray}
\mathbb{P}^n&\rightarrow&\mathbb{C}\nonumber\\
    P&\mapsto&\lVert \mathcal{J}P\rVert_{\mu,HS}^2,
\end{eqnarray}
 is closable on the set of noncommutative polynomials. Said otherwise, when the distribution $\mu$ is canonically associated to some tuple $X=(X_1,\ldots,X_n)$ of self-adjoint operators in $(\mathcal{M},\tau)$, a tracial $W^*$-probability space, this tuple have necessarily full non-microstates free entropy dimension, i.e $\delta^*(X)=n$ (see Theorem 2.10 in \cite{CS}). In fact in the next applications we will assume stronger conditions, such as bounded or even Lipschitz conjugate variables as defined by Dabrowski in \cite{DAB14}).
\end{enumerate}
\begin{remark}
In fact, (the proof could be found right after this paragraph) it is straightforward to check the validity of these generalized Poincare inequalities for semicircular systems (with a constant $C_l=1$, for all $l\in \mathbb{N}$) and it is simply based upon the first order free Poincare inequality and that the Tchebychev polynomials forms an orthonormal basis of the noncommutative polynomials in semicircular elements. 
\bigbreak
It is even true in an infinite dimensional setting where noncommutative polynomials in semicircular elements are replaced by regular functionals (in the free Malliavin calculus sense) of a free Brownian motion, and it is again based upon the first-order free Poincare inequality on the Wigner space (see theorem $8$ in \cite{Di2}) and on the chaotic decomposition.
Thus our assumption cover examples of regular functionals of a (possibly infinite) semicircular process (e.g finite sum of Wigner integrals, which we recall that are norm-closure of polynomials in semicircular elements):
\newline
e.g, we can consider $X=(P_1(S_1,\ldots,S_n),\ldots,P_n(S_1,\ldots,S_n))$ for some $P_1,\ldots, P_n \in \mathbb{P}$, for which the the non-microstates free entropy dimension $\delta^*(X)$ is full.
\end{remark}
We only state for convenience the infinite dimensional version (which is the strongest one) of the generalized Poincare inequality on the Wigner space involving Malliavin operators (in fact, we will only prove it for infinitely smooth functionals which is sufficient for our purpose, as noncommutative polynomials in semicircular variables always belong to the space of {\it test Wigner functionals}, see definition 33 and remark 12 in \cite{Di2}). For the terminology about the "{\it Wigner space}", the associated Sobolev Wigner spaces, and basics definitions about the Malliavin operators, we refer to \cite{Di2} and on the first reference about free Malliavin calculus of Biane and Speicher \cite{BS}.
\begin{proposition}
Let $F\in \mathbb{D}^{\infty,2,\sigma}$, a test Wigner functional, then we have the following generalized Poincare inequality which holds true for every integer $k\geq 0$.
\end{proposition}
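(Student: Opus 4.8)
The plan is to pass to the Wigner chaos decomposition and reduce the estimate to an elementary comparison between the ``eigenvalues'' of the iterated free Malliavin gradient. Since $F\in\mathbb{D}^{\infty,2,\sigma}$, it has a chaotic expansion $F=\sum_{n\ge 0}I_n(f_n)$ with mirror-symmetric kernels $f_n$, and membership in $\mathbb{D}^{\infty,2,\sigma}$ guarantees that this series, as well as all of its iterated free Malliavin derivatives $\nabla^j F$, converges in the appropriate $L^2$-spaces, so that every manipulation below may be carried out chaos by chaos.

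First I would record the action of the iterated free gradient on a single homogeneous chaos. Starting from the elementary rule which, read on kernels, sends the $n$-th chaos into $\bigoplus_{j=0}^{n-1}(\text{chaos }j)\bar\otimes(\text{chaos }n-1-j)$, one iterates $k$ times: $\nabla^k I_n(f_n)$ is a sum of $(k+1)$-fold tensors of Wigner chaoses whose orders add up to $n-k$; in particular $\nabla^k I_n(f_n)=0$ whenever $n<k$. Since chaoses of different total orders are orthogonal in the $(k+1)$-fold tensor $L^2$-space, the gradients of distinct chaos components are mutually orthogonal, and a direct count of the admissible ``cuts'' gives
\[
\big\lVert \nabla^k I_n(f_n)\big\rVert_2^2=c_{n,k}\,\big\lVert I_n(f_n)\big\rVert_2^2,
\]
where $c_{n,k}$ is an explicit combinatorial coefficient ($c_{n,k}=\binom{n}{k}$ in the natural normalization, each reconstruction of a cut sequence having multiplicity one), with $c_{n,k}=0$ for $n<k$ and $c_{n,k}>0$ for $n\ge k$. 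Summing over $n$, $\lVert\nabla^k F\rVert_2^2=\sum_{n\ge k}c_{n,k}\,\lVert I_n(f_n)\rVert_2^2$.

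Next I observe that $\tau^{\otimes(k+1)}(\nabla^k F)\,1^{\otimes(k+1)}$ is exactly the orthogonal projection of $\nabla^k F$ onto the all-scalar tensor $1^{\otimes(k+1)}$, which has total chaos order $0$; by the previous step this receives a contribution only from the chaos $n=k$, and $\nabla^k I_k(f_k)$ is itself a multiple of $1^{\otimes(k+1)}$. Hence centering kills precisely the $n=k$ term, while $\nabla^{k+1}$ annihilates all chaoses of order $\le k$, so that
\[
\Big\lVert \nabla^k F-\tau^{\otimes(k+1)}(\nabla^k F)\,1^{\otimes(k+1)}\Big\rVert_2^2=\sum_{n\ge k+1}c_{n,k}\,\big\lVert I_n(f_n)\big\rVert_2^2
\]
and
\[
\big\lVert\nabla^{k+1}F\big\rVert_2^2=\sum_{n\ge k+1}c_{n,k+1}\,\big\lVert I_n(f_n)\big\rVert_2^2 .
\]
Thus the asserted inequality reduces to the termwise bound $c_{n,k}\le (k+1)\,c_{n,k+1}$ for every $n\ge k+1$; with $c_{n,k}=\binom{n}{k}$ this ratio equals $\tfrac{k+1}{\,n-k\,}\le k+1$, which yields the constant $k+1$ --- the same constant as in the polynomial generalized Poincaré inequality with $C_l=1$, and optimal for this centering since equality is approached on a single Wigner integral of order $n=k+1$.

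The substance of the argument, and its only delicate point, is the analytic bookkeeping rather than the numerical inequality: writing down rigorously the chaos-level formula for $\nabla^k$ on the Wigner space (equivalently, on test Wigner functionals as in \cite{Di2}), verifying the asserted orthogonality of the gradients of distinct chaoses inside the $(k+1)$-fold tensor $L^2$-space, and justifying the interchange of the unbounded operators $\nabla^k,\nabla^{k+1}$ with the infinite chaos sum --- which is exactly where $F\in\mathbb{D}^{\infty,2,\sigma}$ is used; the base case $k=0$ is the first-order free Poincaré inequality on the Wigner space (Theorem~8 in \cite{Di2}). Alternatively one may argue by induction on $k$, applying the first-order inequality ``one leg at a time'' to $\nabla^k F$ with the remaining $k$ tensor legs frozen as left/right multiplication operators; for semicircular-type structures the relevant one-leg Poincaré constant is $1$ (this is the $C_l=1$ phenomenon recorded in the text), so the constant $k+1$ is again obtained after $k$ such steps, but one still needs the same orthogonality input to control the frozen legs.
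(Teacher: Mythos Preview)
Your proposal is correct and follows essentially the same route as the paper: both arguments expand $F$ in its Wigner chaos decomposition, use the Wigner--It\^o isometry to compute $\lVert\nabla^k F\rVert_2^2=\sum_{n\ge k}\binom{n}{k}\lVert f_n\rVert_2^2$, observe that centering removes exactly the $n=k$ term, and conclude via the elementary bound $\binom{n}{k}\le(k+1)\binom{n}{k+1}$ for $n\ge k+1$. Your additional remarks on the analytic bookkeeping and the alternative ``one leg at a time'' induction are sound but not needed beyond what the paper already invokes.
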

\begin{equation}
     \bigg \lVert \nabla^kF-\tau^{\otimes (k+1)}(\nabla^k F).1^{\otimes (k+1)}\bigg\rVert_{2}^2\leq (k+1)
\bigg\lVert \nabla^{(k+1)}F\bigg\rVert^2_{2},
\end{equation}
\begin{proof}
Since $F$ admits a chaotic decomposition in terms of multiple Wigner-Ito integral, we have:
\begin{equation}
    F=\sum_{n=0}^{\infty} I_n(f_n)
\end{equation}
Thus applying iteratively the Malliavin operator (note that the conventions of derivations, that is on which side of the tensor product the derivatives applies, are changed from the definition 26 in \cite{Di2}, nevertheless since the Malliavin derivatives is an (almost-everywhere) coassociative derivation, it changes nothing).
\newline
We also have since $F$ is infinitely smooth, from the free Stroock formula (theorem 11 in \cite{Di2}) that:
\begin{equation}
    f_n=\tau^{(n+1)}(\nabla^n F)
\end{equation}
Now we recall that the notations $I_n^{\otimes (i_1,\ldots,i_k)}:=I_{i_1-1}\otimes\ldots\ldots\otimes I_{n-i_k}$ stands for the tensor product of the multiple Wigner integrals with this determined orders.
\begin{equation}
    \nabla^k F=\sum_{n=k}^{\infty}\sum_{1\leq i_1<\ldots<i_k\leq n} I_n^{\otimes (i_1,\ldots,i_k)}(f_n)\nonumber
\end{equation}
And thus 
\begin{equation}
    \nabla^k F-\tau^{(k+1)}(\nabla^kF).1^{\otimes (k+1)}=\sum_{n=k+1}^{\infty}\sum_{1\leq i_1<\ldots<i_k\leq n} I_n^{\otimes (i_1,\ldots,i_k)}(f_n)\nonumber
\end{equation}
Now by computing the $L^2$-norm, we have from the Wigner-Ito isometry:
\begin{equation}
    \bigg \lVert \nabla^k F-\tau^{(k+1)}(\nabla^kF).1^{\otimes (k+1)}\bigg\rVert_2^2=\sum_{n=k+1}^{\infty}\binom{n}{k}\lVert f_n\rVert_2^2\nonumber
\end{equation}
On the other side, 
\begin{equation}
    \bigg \lVert \nabla^{(k+1)}F\bigg\rVert_2^2=\sum_{n=k+1}^{\infty}\binom{n}{k+1}\lVert f_n\rVert_2^2\nonumber
\end{equation}
which suffices to conclude since $(k+1)\binom{n}{k+1}\geq \binom{n}{k}$, for $n>k$.
\end{proof}
\qed
\begin{remark}
The first assumption of a generalized free Poincare inequality is not an assumption in the classical case: it is automatically satisfied  as soon as the probability measure $\mu\in \mathcal{P}(\mathbb{R}^d)$ satisfies a (standard) Poincare inequality: $\exists\:C_P<\infty$, such that for any $f$, a locally Lipschitz function with $\int fd\mu=0$, we have:
\begin{equation}
   \int f^2d\mu\leq C_P \int \lvert \nabla f\lvert^2d\mu
\end{equation}
\end{remark}

\begin{theorem}
Suppose that the self-adjoint $n$-tuple $X=(x_1,\ldots,x_n)$ satisfies the generalized free Poincare inequalities and has its moments of order less than or equal to $max(k,2)$ which match with those of a free $(0,\rho^{-1})$ semicircular system. Then an higher-order free Stein kernel of order $k$ with respect to the potential $V_{\rho}$ does exist.
\end{theorem}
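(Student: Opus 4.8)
The plan is to build the kernels $A_1,A_2,\ldots,A_k$ successively by induction on the order, using at each stage a Riesz–representation argument. The quantity to be represented at every order is the single linear functional
\[
L(P):=\langle [\mathcal{D}V_\rho](X),P(X)\rangle_\tau-(\tau\otimes\tau)\circ\mathrm{Tr}\big(([\mathcal{J}P](X))^*\big),\qquad P\in\mathbb{P}^n,
\]
since, by the shifted definition, a free Stein kernel of order $j$ is exactly an $A_j\in L^2(\mathbb{T}_{(j+1,n)}(\mathcal{M}^{\otimes(j+1)}))$ with $L(P)=\langle A_j,[\mathcal{J}^jP](X)\rangle_2$ for all $P\in\mathbb{P}^n$. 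Producing $A_j$ thus amounts to showing that $L$ factors through the map $P\mapsto[\mathcal{J}^jP](X)$ and is bounded with respect to $\lVert[\mathcal{J}^jP](X)\rVert_2=\lVert P\rVert_{H^j(\mu)}$, so that it defines a bounded linear functional on the closure of $\mathrm{Range}(\mathcal{J}^j_X)$ inside that Hilbert space; Riesz then yields $A_j$. The second standing assumption (closability, equivalently $\delta^*(X)=n$) is what makes the iterated Jacobians $\mathcal{J}^j_X$ densely defined closable operators, so that these $L^2$-manipulations are legitimate.

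For the base case $j=1$, the order-$1$ moment matching gives $\tau([\mathcal{D}V_\rho](X))=\rho\,\tau(X)=(0,\ldots,0)$, so a first-order free Stein kernel $\tilde A_1$ exists by the theorem of C\'ebron, Fathi and Mai in \cite{FCM}, and $A_1:=\tilde A_1-(1\otimes 1^{op})\otimes I_n$ satisfies $L(P)=\langle A_1,[\mathcal{J}P](X)\rangle_2$; if $k\ge2$, the order-$2$ matching and Lemma \ref{lma1} (applied at order $1$) give $\tau^{\otimes 2}\big((A_1)_{i_1,i_2}\big)=0$, i.e. $A_1$ is centered. For the inductive step $j\to j+1$, with $1\le j\le k-1$, assume $A_j$ has been constructed, represents $L$ at order $j$, and is centered. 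The crucial observation is that the generalized free Poincaré inequality at order $j$ forces, whenever $[\mathcal{J}^{j+1}P](X)=0$, every entry $\partial^j_{(i_1,\ldots,i_j)}P_l(X)$ to coincide with its scalar part $\tau^{\otimes(j+1)}\big(\partial^j_{(i_1,\ldots,i_j)}P_l(X)\big)\,1^{\otimes(j+1)}$; pairing with $A_j$ and using that $A_j$ is centered then gives $L(P)=\langle A_j,[\mathcal{J}^jP](X)\rangle_2=0$. Hence $L$ descends to a well-defined functional of $[\mathcal{J}^{j+1}P](X)$, and since moreover $\langle A_j,\overline{[\mathcal{J}^jP](X)}\rangle_2=0$ for all $P$ by the same centeredness (writing $\overline{(\cdot)}$ for the operation of replacing each entry by its scalar part), the Poincaré inequality also yields
\[
|L(P)|=\Big|\big\langle A_j,\,[\mathcal{J}^jP](X)-\overline{[\mathcal{J}^jP](X)}\,\big\rangle_2\Big|\le\big(C_j(j+1)\big)^{1/2}\lVert A_j\rVert_2\,\lVert[\mathcal{J}^{j+1}P](X)\rVert_2 .
\]
So $L$ is bounded in the $H^{j+1}(\mu)$-seminorm; extending it to the closure of $\mathrm{Range}(\mathcal{J}^{j+1}_X)$ and applying Riesz representation in $L^2(\mathbb{T}_{(j+2,n)}(\mathcal{M}^{\otimes(j+2)}))$ produces $A_{j+1}$ with $L(P)=\langle A_{j+1},[\mathcal{J}^{j+1}P](X)\rangle_2$. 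If $j+1\le k-1$, the moments of order up to $j+2\le k\le\max(k,2)$ match, so Lemma \ref{lma1} at order $j+1$ shows $A_{j+1}$ is centered and the induction continues; if $j+1=k$ we stop, having obtained the desired $A_k$.

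I expect the main obstacle to be the descent (well-definedness) step at each stage, i.e. showing $[\mathcal{J}^{j+1}P](X)=0\implies L(P)=0$: this is exactly where one must combine the generalized free Poincaré inequalities — which identify the kernel of $\mathcal{J}^{j+1}_X$, modulo scalar tensors, with the scalar part of $\mathcal{J}^j_X$ — with the centeredness of the previously built kernel $A_j$ coming from Lemma \ref{lma1} under the moment hypotheses. After that, the boundedness estimate and the Riesz step are routine; the role of the moment hypothesis of order $\max(k,2)$ is only to launch the induction through \cite{FCM} at order $1$ and to keep the intermediate kernels $A_1,\ldots,A_{k-1}$ centered, so that the procedure can be iterated all the way up to order $k$.
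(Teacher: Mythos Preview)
Your proposal is correct and follows essentially the same approach as the paper: induction on the order, with the base case supplied by \cite{FCM}, and the inductive step combining the centeredness of $A_j$ (from Lemma \ref{lma1} under the moment hypotheses) with the generalized free Poincar\'e inequality to bound $|\langle A_j,[\mathcal{J}^jP](X)\rangle_2|$ by a constant times $\lVert[\mathcal{J}^{j+1}P](X)\rVert_2$, after which Riesz representation produces $A_{j+1}$. Your presentation differs only cosmetically --- you apply Riesz directly in the closure of the range of $\mathcal{J}^{j+1}_X$ and spell out the descent/well-definedness step explicitly, whereas the paper phrases the same thing via the quotient seminorm $\lVert\cdot\rVert_{H^{k+1}(\mu)}$ and the isometry $P\mapsto[\mathcal{J}^{k+1}P](X)$ --- but the mathematical content is identical.
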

\begin{proof}
We will prove the assertion by induction:
\begin{enumerate}
    \item the case $k=1$ is exactly the existence of a free Stein kernel, from which the result of Fathi, Cébron and Mai in \cite{FCM} ensures that if $\tau([DV_{\rho}])=(0,\ldots,0)$, then a free Stein kernel with respect to the potential $V_{\rho}$ always exists. 
    \item 
    Now, by induction assumption, suppose that a Stein kernel of order "$k$" denoted $A_k$ does exist. We also recall that we denote in a shorthand the constant valued tensor for any tuple $P\in \mathbb{P}^n$ of noncommutative polynomials:
    \begin{equation}
        \tau^{\otimes{(k+1)}}([\mathcal{J}^kP](X)):=\left(\tau^{\otimes{(k+1)}}(\partial^k_{i_1,\ldots,i_k}P_{i_0}(X)).1^{\otimes (k+1)}\right)_{i_0,i_1,\ldots,i_k=1}^n,
    \end{equation}

\bigbreak
We consider then the following functional, the basic idea being to show that it is a continuous linear form with respect to the norm $\lVert.\rVert_{H^{k+1}(\mu)}$: 
\begin{eqnarray}
\mathbb{P}^n &\rightarrow& \mathbb{C}\nonumber\\
    P=(p_1,\ldots,p_n) &\mapsto& \langle A_k,[\mathcal{J}^kP](X)\rangle_{2},
\end{eqnarray}
We have since $A_k$ is centered by the moments assumptions (see lemma \eqref{lma1}):
\begin{eqnarray}
     \bigg\lvert\langle A_k,\mathcal{J}^kP\rangle_{2}\bigg\rvert&=&\bigg\lvert\langle A_k,[\mathcal{J}^kP](X)-\tau^{\otimes{(k+1)}}([\mathcal{J}^kP](X))\rangle_{2}\bigg\rvert \nonumber\\
     &\leq &\lVert A_k\rVert_2\sqrt{\sum_{i_0,i_1,\ldots,i_k=1}^n\bigg\lVert \partial^k_{i_1,\ldots,i_k}P_{i_0}-\tau^{\otimes{(k+1)}}(\partial^k_{i_1,\ldots,i_k}P_{i_0}).1^{\otimes (k+1)}\bigg\rVert_2^2}\\
    &\leq& \sqrt{(k+1)C_{max}} \lVert A_k\rVert_2\lVert \mathcal{J}^{k+1}P\rVert_{2},
\end{eqnarray}
Where we have used the assumptions of the generalized free Poincaré inequalities
\bigbreak
Therefore the previous map 
\begin{eqnarray}
\mathbb{P}^n&\rightarrow& \mathbb{C}\nonumber\\
    P &\mapsto &\langle A_k,[\mathcal{J}^kP](X)\rangle_{2},
\end{eqnarray}
is a continuous linear form with respect to the norm (on the quotient space) $\lVert .\rVert_{H^{k+1}(\mu)}$. From the Riesz representation theorem, we get the existence of $\g\in H^{k+1}(\mu)$ such that:
\begin{equation}
    \langle A_k,[\mathcal{J}^kP](X)\rangle_2=\langle \g,P\rangle_{H^{k+1}(\mu)},
\end{equation}

Hence $\g$ is a right candidate for the kernel.
Indeed, the linear map from $\mathbb{P}^n/\mathcal{N}^k(\mu)$ (where $\mathcal{N}^{k+1}(\mu):=\left\{P\in \mathbb{P}^n,\: \langle P,P\rangle_{H^{k+1}(\mu)}=0\right\} $ to $L^2\bigg(T_{(k+2,n)}(\mathcal{M})\bigg)$:
\begin{equation}
    P \mapsto [\mathcal{J}^{k+1}P](X),
\end{equation}
is an isometry which extends to $H^{k+1}(\mu)$. Moreover denoting by $[\mathcal{J}^{k+1}\g](X)$ the image of such $\g$ via the isomorphism, we have 
\begin{equation}
    \langle A_k,[\mathcal{J}^kP]\rangle_2=\langle [\mathcal{J}^k\g](X),[\mathcal{J}^kP](X)\rangle_{2},
\end{equation}
for any $P \in \mathbb{P}^n$, which concludes the proof.
\end{enumerate}
\end{proof}
\qed
\begin{remark}
Here the construction of these higher-order free Stein kernel is done via the assumption of generalized free Poincaré inequalities, but an explicit construction of free stein kernel of order "$1$" exists as pointed out by Cébron, Fathi, and Mai in theorem 2.1 in \cite{FCM}. We do not know for now (since we are still unable to construct it), but it seems plausible that an explicit construction in fact does exist. If so, and moreover, if such a construction holds for any arbitrary potentials, we could introduce, define and study a generalized notion of {\it free Stein dimension} as defined formally in the section \ref{op}.
\end{remark}
\subsection{Higher-order Stein discrepancies and functional inequalities}

We fix a finite von Neumann algebra $\mathcal{M}$ equipped with a faithful normal trace $\tau$ and a $n$-tuple $X=(x_1,\ldots,x_n)$ of self-adjoint operators in $\mathcal{M}$.
\newline
Provided the existence of conjugate variables for $X$, we will denote them by $\xi_{1},\ldots,\xi_{n}$.
\newline
Let also suppose that $S = (s_1,\ldots,s_n) \in \mathcal{M}^n$ is a free (0,1)-semicircular $n$-tuple supposed free from $x_1,\ldots,x_n$ (if not taking the reduced free product with the free group factor $L(\mathbb{F}_n)$ and thus consider $\mathcal{M}*L(\mathbb{F}_n)$ will provide the existence of these free semicircular elements in this latter von Neumann algebra).
\newline
Fix $\rho> 0$, we will here only focus our exposure on the potential $V_{\rho}$ introduced before.
\bigbreak
We define the following interpolation (Ornstein-Uhlenbeck type) for each $t \geq 0 $ and each $j=1,\ldots,n$,
$x_j(t)=e^{-t}x_j+\sqrt{1-e^{-2t}}\frac{1}{\sqrt{\rho}}s_j$ collected in a $n$-tuple:
\begin{equation}\label{OU}
X(t)=(x_1(t),\ldots,x_n(t)),
\end{equation}
\begin{flushleft}
Let 
$\E_t  : \mathcal{M}\rightarrow W^*(X(t))$ denote the (unique and trace preserving) conditional expectation onto the von Neumann subalgebra $W^*(X(t))$. We remind here that by propositions 3.7, 3.8 and 3.9 of Voiculescu in \cite{V}, the conjugate variable of order $k$ denoted $\xi^k$ in the one dimensional case $n=1$ is given by :
\end{flushleft}
\begin{equation}
\xi^k(t)=\frac{\rho^{\frac{k}{2}}}{{(1-e^{-2t}})^\frac{k}{2}}\E_t(U_k({s})),
\end{equation}
where $U_k$ is again the $k$-order Tchebychev polynomial of second kind.
\newline
And we also have that  also that 
\begin{equation}
    \xi^k(t)=e^{kt}\E_t(\xi^k),
\end{equation}
In the multivariate case, we have by the same Voiculescu's propositions , that the $k$-order conjugate variables are given by:
\begin{equation}
    \xi^k_{j_1\ldots,j_k}(t)=\frac{\rho^{\frac{k}{2}}}{(1-e^{-2t})^{\frac{k}{2}}}\E_t(U_k^{{j_1}\ldots {j_k}}(S_{j_1},\ldots,S_{j_k})),
\end{equation}
In fact, and has ideas from Malliavin calculus will be used in the sequel, we recall thah it is also easily seen by using analysis on the Wigner space that  $U_k^{{j_1}\ldots {j_k}}(S_{j_1},\ldots,S_{j_k})$ is 
in fact an elementary multiple Wigner integrals with respect to the free Brownian motion (which is also a free iterated Skorohod integral) of order $k$ (see Biane and Speicher \cite{BS} or Kemp et a.l. for a complete exposure \cite{KNPS}). To see that, it suffices to consider an orthonormal basis of $L^2_{\mathbb{R}}(\mathbb{R}_+)$, and set for each $j_1\ldots,j_k=1\ldots,n$, the function $f_{j_1,\ldots,j_k}=e_{j_1}\otimes \ldots \otimes e_{j_k}$, which 
satisfies 
$I_k(f_{j_1,\ldots,j_k})=U_k^{{j_1}\ldots {j_k}}(S_{j_1},\ldots,S_{j_k})$.
(see e.g \cite{Di2} for furthers statement about free
Malliavin calculus on the finite dimensional semicircular spaces).
\bigbreak
Now, we are able to generalize some functional inequalities. The first will be the free {\it HSI} inequality proved by Fathi, Neslon (2016) and refined by the author in \cite{Di} to semicircular families with arbitrary covariance matrices. We restate for convenience their theorem and their main lemmas.
\begin{theorem}(Fathi, Nelson theorem 2.6 in \cite{FN})
Let $X=(x_1,\ldots,x_n)\in \mathcal{M}_{s.a}^n$ and $S=(s_1,\ldots,s_n)$ a free (0,1) semicircular system, then for any $\rho > 0 $ we have :
\begin{equation}
    \chi^*(X|V_p)-\chi^*(\sqrt{\rho^{-1}}S|V_p) \leq \frac{1}{2}\Sigma^*(X|V_p)^2 \log\left(1+\frac{\Phi^*(X|V_p)}{\rho\Sigma^*(X|V_p)^2}\right),
\end{equation}
\end{theorem}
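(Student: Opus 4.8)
The plan is to run the standard HSI scheme --- a de Bruijn identity along a gradient flow, plus two complementary bounds on the relative Fisher information along it, as in Ledoux--Nourdin--Peccati and Fathi--Nelson --- in the free setting, using the free Ornstein--Uhlenbeck interpolation $X(t)=(x_1(t),\dots,x_n(t))$, $x_j(t)=e^{-t}x_j+\sqrt{1-e^{-2t}}\,\rho^{-1/2}s_j$, defined just before the statement. One may first discard the degenerate cases: if $\Phi^*(X|V_\rho)=+\infty$ or $\Sigma^*(X|V_\rho)=+\infty$ the right-hand side is $+\infty$, and if $\Sigma^*(X|V_\rho)=0$ it equals $0$ (its limiting value) while $X$ is already a free $(0,\rho^{-1})$-semicircular family; so we may assume $X$ has conjugate variables and a free Stein kernel relative to $V_\rho$. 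Write $\Phi=\Phi^*(X|V_\rho)$, $\Sigma^2=\Sigma^*(X|V_\rho)^2$, $\psi(t)=\Phi^*(X(t)|V_\rho)$, which is finite for $t>0$ because the semicircular part regularises $X(t)$.

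The first step is the free de Bruijn identity
\[
\frac{d}{dt}\,\chi^*(X(t)\,|\,V_\rho)=-\frac1\rho\,\Phi^*(X(t)\,|\,V_\rho).
\]
I would derive it by factoring $X(t)=e^{-t}\big(X\boxplus\sqrt{(e^{2t}-1)/\rho}\,\widetilde S\big)$ and combining the scalings $\chi^*(cX)=\chi^*(X)+n\log|c|$, $\Phi^*(cX)=c^{-2}\Phi^*(X)$ with the differentiated form $\tfrac{d}{ds}\chi^*(X\boxplus\sqrt s\,\widetilde S)=\tfrac12\Phi^*(X\boxplus\sqrt s\,\widetilde S)$ of Voiculescu's integral formula for $\chi^*$, which gives $\tfrac{d}{dt}\chi^*(X(t))=\tfrac1\rho\Phi^*(X(t))-n$; then the remaining terms cancel against $\tau(V_\rho(X(t)))=\tfrac\rho2\big(e^{-2t}Var(X)+(1-e^{-2t})n/\rho\big)$ using the elementary relation $\Phi^*(X(t)|V_\rho)=\Phi^*(X(t))-2\rho n+\rho^2 Var(X(t))$, itself a consequence of $\sum_j\langle \xi_{x_j(t)},x_j(t)\rangle_2=\sum_j\langle 1\otimes 1^{op},[\partial_j t_j](X(t))\rangle_{HS}=n$. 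Since $\chi^*(\cdot|V_\rho)$ is continuous along the flow, minimised at the equilibrium $\sqrt{\rho^{-1}}S$, and $\psi$ is integrable at both ends, integrating gives
\[
\chi^*(X|V_\rho)-\chi^*(\sqrt{\rho^{-1}}S|V_\rho)=\frac1\rho\int_0^\infty \psi(t)\,dt,
\]
so the whole problem reduces to estimating $\psi$.

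The second step produces two bounds on $\psi$. The $\rho$-convexity of $V_\rho$ --- equivalently the Mehler-type semigroup property ($X(t_0+s)$ has the law of the time-$s$ flow of $X(t_0)$) together with contractivity of the conditional expectations $\mathbb{E}_t$ --- yields exponential decay $\psi(t)\le e^{-2t}\Phi$; and expressing the conjugate variables of $X(t)$ through a free Stein kernel $A$ of $X$, and using that the added semicircular component has variance $(1-e^{-2t})/\rho$, yields $\psi(t)\le\tfrac{\rho e^{-2t}}{1-e^{-2t}}\lVert A-(1\otimes 1^{op})\otimes I_n\rVert_2^2$, hence $\psi(t)\le\tfrac{\rho e^{-2t}}{1-e^{-2t}}\Sigma^2$ after optimising over $A$; the same argument from time $t_0$ also shows $t\mapsto\Sigma^*(X(t)|V_\rho)$ is non-increasing. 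The third step upgrades the mere minimum of these two into the sharp interpolation, via the Riccati-type inequality
\[
\psi'(t)\le -2\psi(t)-\frac{2}{\rho\,\Sigma^*(X(t)|V_\rho)^2}\,\psi(t)^2\le -2\psi(t)-\frac{2}{\rho\Sigma^2}\,\psi(t)^2,
\]
the quadratic term being a reverse Cauchy--Schwarz estimate, controlled by the Stein discrepancy, for the second-order (``$\Gamma_2$''-type) quantity arising when one differentiates the relative free Fisher information. Setting $h=1/\psi$ this reads $h'\ge 2h+\tfrac{2}{\rho\Sigma^2}$, hence $\psi(t)\le \dfrac{e^{-2t}\Phi}{\,1+\tfrac{\Phi}{\rho\Sigma^2}(1-e^{-2t})\,}$; substituting $a=e^{-2t}$ in the integral of Step~1 then gives
\[
\chi^*(X|V_\rho)-\chi^*(\sqrt{\rho^{-1}}S|V_\rho)\le\frac{\Phi}{2\rho}\int_0^1\frac{da}{1+\tfrac{\Phi}{\rho\Sigma^2}(1-a)}=\frac12\,\Sigma^2\,\log\!\Big(1+\frac{\Phi}{\rho\Sigma^2}\Big),
\]
which is the claim.

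The main obstacle is Steps~2--3: the Stein-kernel bound on the relative free Fisher information along the flow --- relating $\xi_{x_j(t)}$ to the Stein kernel of $X$ and quantifying the regularising effect of the free semicircular part --- and, above all, the upgrade to the sharp constant, i.e. establishing the Riccati inequality, which requires an exact differentiation formula for $t\mapsto\Phi^*(X(t)|V_\rho)$ in terms of a free ``$\Gamma_2$'' quantity plus a reverse Cauchy--Schwarz bounding it below by $\psi^2/(\rho\Sigma^2)$. A secondary, purely technical point is justifying the differentiability and continuity of $t\mapsto\chi^*(X(t))$ and $t\mapsto\Phi^*(X(t))$ on $(0,\infty)$ and the integrability of $\psi$, all of which follow from the semicircular smoothing and Voiculescu's and Shlyakhtenko's regularity results.
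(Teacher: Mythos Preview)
Your Steps~1--2 are essentially the paper's (and Fathi--Nelson's) approach, but Step~3 is not, and it is both unnecessary and unresolved. The paper reduces the theorem entirely to Lemma~2.4 (the Stein-kernel bound on the relative Fisher information along the flow) together with the exponential decay $\psi(t)\le e^{-2t}\Phi$ and the de~Bruijn identity: one simply splits $\int_0^\infty\psi(t)\,dt$ at a point $t_0$, uses $\psi(t)\le e^{-2t}\Phi$ on $[0,t_0]$ and $\psi(t)\le \rho\,e^{-4t}(1-e^{-2t})^{-1}\Sigma^2$ on $[t_0,\infty)$, and optimises in $t_0$. The second integral equals $\tfrac{\rho\Sigma^2}{2}\big(-\log(1-a)-a\big)$ with $a=e^{-2t_0}$, and the optimum $1-a=\Sigma^2/(\Phi/\rho+\Sigma^2)$ gives exactly $\tfrac12\Sigma^2\log\big(1+\Phi/(\rho\Sigma^2)\big)$. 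No differential inequality, no free $\Gamma_2$ computation, no reverse Cauchy--Schwarz is needed.

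The reason you were led to a Riccati argument is that your Stein-kernel bound in Step~2 is too weak: you wrote $\psi(t)\le \rho\,e^{-2t}(1-e^{-2t})^{-1}\Sigma^2$, whereas Lemma~2.4 gives $\psi(t)\le \rho\,e^{-4t}(1-e^{-2t})^{-1}\Sigma^2$. The extra factor $e^{-2t}$ is precisely what makes the split-and-optimise computation land on the sharp logarithm; with only $e^{-2t}$ in the numerator one obtains $\tfrac12\Sigma^2\big(1+\log(\Phi/(\rho\Sigma^2))\big)$, which is not the claimed bound. As for the Riccati route itself, the inequality $\psi'\le -2\psi-\tfrac{2}{\rho\Sigma^2}\psi^2$ is not established in the free setting: differentiating $t\mapsto\Phi^*(X(t)|V_\rho)$ and bounding the resulting ``free $\Gamma_2$'' term below by $\psi^2/(\rho\Sigma^2)$ would require new work that neither the paper nor Fathi--Nelson carries out. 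So your proposal has a genuine gap exactly where you flagged it, while the paper's route avoids that obstacle entirely by strengthening the pointwise bound.
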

The proof of this theorem is simply achieved by the following powerful lemma which gives a bound of the free Fisher information relative to these potentials along the Ornstein-Uhlenbeck flow:
\begin{lemma}(Fathi, Nelson lemma 2.4 in \cite{FN})\label{lem1}
Suppose $\Phi^*(X|V_p)<\infty$. For $\rho>0$ and $X(t)=(x_1(t),\ldots,x_n(t))$ defined previously (\ref{OU}), we have:
\begin{equation}
    \frac{1}{p}\Phi^*(X(t)|V_p)\leq \frac{e^{-4t}}{1-e^{-2t}}\Sigma^*(X|V_p)^2,
\end{equation}

\end{lemma}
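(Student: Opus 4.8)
The plan is to construct a free Stein kernel for $X(t)$ out of one for $X$ and then to exploit that $X(t)$ carries a genuine free semicircular summand. If $\Sigma^*(X|V_\rho)=+\infty$ the inequality is trivial, so I assume a free Stein kernel $A$ of $X$ relative to $V_\rho$ exists (equivalently, by \cite{FCM}, $\tau(x_j)=0$ for every $j$); note that $\Phi^*(X|V_\rho)<\infty$ gives $\Phi^*(X)<\infty$, and that $\Phi^*(X(t))<\infty$ by the free Stam inequality, since $X(t)$ has the free summand $\rho^{-1/2}\sqrt{1-e^{-2t}}\,s_j$ in each coordinate. Abbreviate $a=e^{-t}$ and $b=\rho^{-1/2}\sqrt{1-e^{-2t}}$, so that $X(t)=aX+bS$ and $\rho b^{2}=1-e^{-2t}$.

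The first step records two formulas for the conjugate variables of $X(t)$, using Voiculescu's free-convolution identity $\xi_{Y_1+Y_2,j}=\E[\xi_{Y_1,j}\,|\,W^*(Y_1+Y_2)]$ together with the scaling $\xi_{cY,j}=c^{-1}\xi_{Y,j}$: differentiating in the $X$-block gives $\xi_{x_j(t)}=a^{-1}\E_t[\xi_{X,j}]$, and differentiating in the $S$-block (where $\xi_{S,j}=s_j$) gives $\xi_{x_j(t)}=b^{-1}\E_t[s_j]$. The second, decisive step is an algebraic cancellation: I take the convex combination of these two formulas with weight $e^{-2t}$ on the first and $1-e^{-2t}$ on the second, and subtract $\rho x_j(t)=\E_t[\rho a\,x_j+\rho b\,s_j]$; with precisely these weights the coefficient of $s_j$ collapses to $(1-e^{-2t})b^{-1}-\rho b=0$, and what remains assembles into $a(\xi_{X,j}-\rho x_j)$, so that
\[
\xi_{x_j(t)}-\rho\,x_j(t)=e^{-t}\,\E_t\!\big[\,\xi_{X,j}-\rho x_j\,\big].
\]
(Equivalently, $A(t):=(1-e^{-2t})\,(1\otimes1^{op})\otimes I_n+e^{-2t}\,\E_t^{\otimes 2}(A)$, with $\E_t^{\otimes 2}$ applied entrywise, is a free Stein kernel for $X(t)$ relative to $V_\rho$.) Since $\E_t$ is an $L^2$-contraction this already gives $\Phi^*(X(t)|V_\rho)\le e^{-2t}\,\Phi^*(X|V_\rho)$, but not yet the claimed bound.

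To recover the factor $(1-e^{-2t})^{-1}$ and replace $\Phi^*$ by $\Sigma^*$ on the right, I rewrite the relative score of $X$ through the Stein kernel as $\xi_{X,j}-\rho x_j=-\sum_i\partial_i^*(B_{ij})$ with $B_{ij}:=A_{ij}-\delta_{ij}(1\otimes1^{op})$, and push $\E_t$ through using the two chain rules $\partial_i^{X}[q(X(t))]=a\,(\partial_i q)(X(t))$ and $\partial_{s_i}[q(X(t))]=b\,(\partial_i q)(X(t))$ for $q\in\mathbb{P}$; pairing both against polynomials in $X(t)$ yields $\E_t[\partial_i^{X,*}(\eta)]=\tfrac ab\,\E_t[\partial_{s_i}^*(\eta)]$. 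Because the legs of $B_{ij}$ lie in $W^*(X)$, which is free from each $s_i$, the correction terms in the adjoint free difference quotient drop out and $\partial_{s_i}^*(B_{ij})=B_{ij}\,\sharp\,s_i$ (the middle-insertion action $\eta_1\otimes\eta_2\mapsto\eta_1 s_i\eta_2$), whence $\xi_{x_j(t)}-\rho x_j(t)=-\tfrac{a^{2}}{b}\,\E_t\big[\sum_i B_{ij}\sharp s_i\big]$. Now contractivity of $\E_t$ and the free Itô isometry $\big\|\sum_i B_{ij}\sharp s_i\big\|_2^2=\sum_i\|B_{ij}\|_2^2$ (valid since the $s_i$ are free standard semicirculars, free from the legs of the $B_{ij}$) give, after summing over $j$,
\[
\Phi^*(X(t)|V_\rho)\;\le\;\frac{a^{4}}{b^{2}}\sum_{i,j}\|B_{ij}\|_2^2\;=\;\frac{\rho\,e^{-4t}}{1-e^{-2t}}\,\big\|A-(1\otimes1^{op})\otimes I_n\big\|_2^2 ,
\]
and taking the infimum over admissible Stein kernels $A$ and dividing by $\rho$ is exactly the statement.

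The step I expect to be the main obstacle is the rigorous treatment of the densely defined, unbounded, closable adjoints $\partial_i^*$ and $\partial_{s_i}^*$ — in particular justifying the identity $\E_t\circ\partial_i^{X,*}=\tfrac ab\,\E_t\circ\partial_{s_i}^*$ on a domain containing the entries $B_{ij}$, and verifying that the infimum defining $\Sigma^*(X|V_\rho)$ may be restricted to Stein kernels whose legs lie in $W^*(X)$, which is what makes the freeness from $S$ used in the last step available. Once the operator-theoretic framework is in place, the convex-combination identity of step two and the isometry estimate of step three are short and essentially mechanical.
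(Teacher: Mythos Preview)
Your proof is correct and follows essentially the same route as the paper (which cites this as Fathi--Nelson's lemma and reproves it as the $k=1$ case of Lemma~\ref{lma4}): the identity $\xi_{x_j(t)}-\rho x_j(t)=e^{-t}\E_t[\xi_{X,j}-\rho x_j]$, the rewriting of the relative score through the Stein kernel via $\mathcal{J}_X^*$, the chain rule converting $\partial_X^*$ to $\partial_S^*$ with factor $a/b$, and the isometry $\lVert\partial_{s_i}^*(B_{ij})\rVert_2=\lVert B_{ij}\rVert_2$ for biprocesses with legs in $W^*(X)$ free from $S$ are exactly the four ingredients the paper uses. The only cosmetic difference is that the paper finishes with the pairing representation $\Phi^*(X(t)|V_\rho)=\frac{e^{-2t}\rho^{1/2}}{(1-e^{-2t})^{1/2}}\langle \mathcal{J}_S^*(-A_1),\Xi(t)-\rho X(t)\rangle_2$ and Cauchy--Schwarz, whereas you apply the $L^2$-contractivity of $\E_t$ directly to the formula for $\xi_{x_j(t)}-\rho x_j(t)$; these yield the same bound. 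Your caveat about restricting to Stein kernels with legs in $W^*(X)$ is harmless: projecting entrywise onto $L^2(W^*(X)\bar\otimes W^*(X)^{op})$ preserves the Stein relation (tested only against polynomials in $X$) and does not increase the norm.
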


We can now obtain a new upper bound for the relative free Fisher information along the associated Ornstein-Uhlenbeck semigroup in terms of $k$-th free Stein discrepancy:
\begin{lemma}\label{lma4}
Let suppose that $X=(x_1,\ldots,x_n)\in \mathcal{M}_{s.a}^n$ has first and second order bounded conjugate variables, then for any $t>0$, we have :
\begin{equation}
\frac{1}{\rho^k}\Phi^*(X(t)|V_p)\leq  \frac{e^{-2(k+1)t}}{(1-e^{-2t})^k}\Sigma_k ^*(X|V_p)^2,
\end{equation}
\end{lemma}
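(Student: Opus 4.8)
The plan is to follow the proof of Lemma \ref{lem1} of Fathi and Nelson, but to carry out the integration by parts against the semicircular part of $X(t)$ $k$ times rather than once, so that the $k$-th order Stein kernel $A_k$ enters, each of the $k$ derivatives contributing a factor $\sqrt{\rho}\,(1-e^{-2t})^{-1/2}$ while the interpolation itself contributes $e^{-(k+1)t}$. Since $\Sigma_k^*(X|V_\rho)=+\infty$ whenever no $k$-th order free Stein kernel exists, it suffices to fix an admissible $A_k$ and prove $\Phi^*(X(t)|V_\rho)\le \rho^{k}e^{-2(k+1)t}(1-e^{-2t})^{-k}\lVert A_k\rVert_2^2$, then take the infimum over $A_k$. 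Enlarging $\mathcal{M}$ if needed, work with a free $(0,1)$-semicircular $n$-tuple $S=(s_1,\ldots,s_n)$ free from $X$, so that $X(t)=e^{-t}X+\sqrt{1-e^{-2t}}\,\rho^{-1/2}S$ and $\mathbb{E}_{W^*(X)}[q(X(t))]=(\mathcal{P}_tq)(X)$, where $\mathcal{P}_t$ is the free Ornstein--Uhlenbeck semigroup whose invariant law is the free $(0,\rho^{-1})$-semicircular one; in particular $\mathcal{P}_t$ maps each $\mathbb{P}^n$ into itself, preserving degrees. The hypothesis that $X$ has bounded first and second order conjugate variables enters only, via \cite{YGS}, to ensure that the iterated free difference quotients $\partial^j_X$ and $\partial^j_{X(t)}$ are densely defined and closable (so that all the adjoints below make sense and the minimizing $A_k$ may be taken in the closed range of $\mathcal{J}^k_X$); by the moment assumption and Lemma \ref{lma1}, $A_k$ is centered.

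For a test tuple $Q=(q_1,\ldots,q_n)\in\mathbb{P}^n$ evaluated at $X(t)$, I would compute $\sum_j\langle \xi_{x_j(t)}-\rho x_j(t),\,q_j(X(t))\rangle$ in three moves. First, the relative Schwinger--Dyson identity for $X(t)$ turns it into $\langle (1\otimes 1^{op})\otimes I_n,[\mathcal{J}Q](X(t))\rangle_{HS}-\rho\langle X(t),Q(X(t))\rangle$. Second, writing $X(t)=e^{-t}X+\sqrt{1-e^{-2t}}\rho^{-1/2}S$, the exact Schwinger--Dyson identity for the free semicircular elements $s_j$ and the chain rule $\partial_{s_j}[q_j(X(t))]=\sqrt{1-e^{-2t}}\,\rho^{-1/2}(\partial^{X(t)}_j q_j)(X(t))$ convert the $S$-part of $\rho\langle X(t),Q(X(t))\rangle$ into $(1-e^{-2t})\langle (1\otimes 1^{op})\otimes I_n,[\mathcal{J}Q](X(t))\rangle_{HS}$; third, the remaining term $\rho e^{-t}\langle X,Q(X(t))\rangle=\rho e^{-t}\langle X,(\mathcal{P}_tQ)(X)\rangle$ is treated with the $k$-th order Stein equation for $X$ applied to the polynomial tuple $\mathcal{P}_tQ$, using the compatibility of $\partial^l_X$ with $\mathbb{E}_{W^*(X)}$ and the chain rule $\partial^l_X[q_j(X(t))]=e^{-lt}(\partial^l_{X(t)}q_j)(X(t))$ for $l=1$ and $l=k$. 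Exactly as in the first order case, the ``bulk'' first order contributions telescope, and one is left with
\begin{equation*}
\xi_{x_j(t)}-\rho x_j(t)=-\,\frac{\rho^{k/2}e^{-(k+1)t}}{(1-e^{-2t})^{k/2}}\;\mathbb{E}_{W^*(X(t))}\!\Big[\,\sum_{i_1,\ldots,i_k=1}^{n}\partial^{k,*}_{(i_1,\ldots,i_k),S}\big((A_k)_{j,i_1,\ldots,i_k}(X)\big)\Big],
\end{equation*}
where $\partial^{k,*}_{\cdot,S}$ is the adjoint of the iterated free difference quotient with respect to $S$; since all legs of $(A_k)_{j,i_1,\ldots,i_k}(X)$ lie in $W^*(X)$ and are free from $S$, the bracketed element is precisely the contraction of $A_k$ against the multivariate Tchebychev tensor $\bar U_k(S)$ (recall $U^{i_1,\ldots,i_k}_k(S)=\partial^{k,*}_{(i_1,\ldots,i_k),S}(1^{\otimes(k+1)})$).

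Taking $L^2$-norms and using that $\mathbb{E}_{W^*(X(t))}$ is an $L^2$-contraction, the estimate reduces to
\[
\sum_{j=1}^{n}\Big\lVert \sum_{i_1,\ldots,i_k=1}^{n}\partial^{k,*}_{(i_1,\ldots,i_k),S}\big((A_k)_{j,i_1,\ldots,i_k}(X)\big)\Big\rVert_2^2\le \lVert A_k\rVert_2^2 .
\]
This holds because the map $B=(B_{i_1,\ldots,i_k})\mapsto\sum_{i_1,\ldots,i_k}\partial^{k,*}_{(i_1,\ldots,i_k),S}(B_{i_1,\ldots,i_k})$ is isometric from $\bigoplus_{i_1,\ldots,i_k}L^2(W^*(X)^{\otimes(k+1)})$ into $L^2(W^*(X,S))$, being the $W^*(X)$-amplification of the Wigner--It\^o isometry $\langle U_k^{i_1,\ldots,i_k}(S),U_k^{i'_1,\ldots,i'_k}(S)\rangle_2=\delta_{(i_1,\ldots,i_k),(i'_1,\ldots,i'_k)}$: it relies on the freeness of $S$ from $W^*(X)$ and on the fact that, $S$ being a free (rather than a classical Gaussian) family, the matching non-crossing pairing in the Wick expansion forces $i_\ell=i'_\ell$ in order, so that no symmetrization is needed and the correction terms from repeated indices are exactly absorbed. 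Granting this, $\sum_j\lVert \xi_{x_j(t)}-\rho x_j(t)\rVert_2^2\le \rho^{k}e^{-2(k+1)t}(1-e^{-2t})^{-k}\lVert A_k\rVert_2^2$, and taking the infimum over admissible $A_k$ gives the claim; for $k=1$ this recovers Lemma \ref{lem1}, and for a free $(0,\rho^{-1})$-semicircular $X$ one has $A_k=0$ and both sides vanish. The main obstacle is the careful bookkeeping of the iterated integration by parts in the second step --- tracking which of the $k+1$ derivatives lands on $X$ and which on $S$, the accompanying conditional expectations and scalar factors, and showing that the resulting object is precisely the $W^*(X)$-valued multiple Wigner integral to which the isometry of the third step applies.
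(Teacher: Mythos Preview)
Your proposal is correct and follows essentially the same route as the paper. Both arguments reduce the bound to the identity $\lVert \mathcal{J}_S^{k,*}(A_k)\rVert_2=\lVert A_k\rVert_2$ (the $W^*(X)$-valued Wigner--It\^o isometry), after extracting the factor $\rho^{k/2}e^{-(k+1)t}(1-e^{-2t})^{-k/2}$ from the chain rule $e^{kt}\mathcal{J}_X^k=\mathcal{J}_{X(t)}^k=\rho^{k/2}(1-e^{-2t})^{-k/2}\mathcal{J}_S^k$. The only difference is presentational: the paper quotes the Fathi--Nelson representation $\Phi^*(X(t)|V_\rho)=e^{-t}\langle \mathcal{J}_X^*[(1\otimes 1)\otimes I_n-A],\,\Xi(t)-\rho X(t)\rangle$ as a black box, rewrites $\mathcal{J}_X^*[(1\otimes 1)\otimes I_n-A]=\mathcal{J}_X^{k,*}(-A_k)$, and closes with Cauchy--Schwarz, whereas you rederive the representation of $\xi_{x_j(t)}-\rho x_j(t)$ from scratch by pairing against test tuples and splitting $X(t)$ into its $X$- and $S$-parts; the paper also spells out the isometry via an inductive Heisenberg-commutation/Wick argument for $\partial_S^{k,*}$, which is the detailed content behind your one-line appeal to the amplified Wigner--It\^o isometry.
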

\begin{proof}
We first sketch the arguments which give a representation of the relative free Fisher information along the Ornstein-Uhlenbeck flow in terms of higher-order Stein kernel and which are valid in every dimension $n\geq 1$. We however only specify (and only give details) the case $n=1$ for sake of clarity because even in the one dimensional case the proof already involves a lot of notations.
\bigbreak
We can also assume finite first order free Stein discrepancy $\Sigma^*(X|V_{\rho})$ as if not the case there is nothing to prove. 
\bigbreak
Then in the proof of the previous lemma of Fathi and Nelson (see lemma 2.4 in\cite{FN}), we have the following representation of the relative free Fisher information along the Ornstein-Uhlenbeck flow:
\begin {equation}
\Phi^*(X(t)|V_p)=e^{-t}\langle \mathcal{J}_X^*[(1\otimes1)\otimes I_n-A],\Xi(t)-\rho X(t)\rangle_2
\end{equation}
where $A$ is a free Stein kernel (of order $1$) and $\Xi=(\xi_1,\ldots,\xi_n)$ with each $\xi_i \in L^2(W^*(X))$ are the conjugate variables of $X$ collected in a tuple. So $\mathcal{J}^*_X((1\otimes 1)\otimes I_n)=\Xi$.
\begin{flushleft}
Now using the higher-order Stein kernel $A_k$, and denoting $ \mathcal{J}_X^{k,*}$ the adjoint of $\mathcal{J}_X^k$, and by the remark \ref{imp}, $\langle A_k,[\mathcal{J}^k P](X)\rangle_{2}=\langle A-(1\otimes 1)\otimes I_n,[\mathcal{J}P]X)\rangle_{HS}$ we have :
\begin{equation}
    \mathcal{J}_X^*[(1\otimes1)\otimes I_n-A]=\mathcal{J}_X^{k,*}(-A_k),
\end{equation}
\end{flushleft}
\begin{flushleft}
Then, it is readily checked on $\mathbb{C}\langle x_1(t),\ldots,x_n(t)\rangle$ that,
\begin{equation}
    e^{kt}\mathcal{J}_X^k=\mathcal{J}_{X(t)}^k=\frac{\rho^{\frac{k}{2}}}{{(1-e^{-2t}})^{\frac{k}{2}}}\mathcal{J}_S^k,
\end{equation}
\end{flushleft}
And so, for any $Z\in \huge{\mathbb{E}}_t(\mathcal{M})^n$
\begin{equation}
    \langle \mathcal{J}_X^*[1-A],Z\rangle_2=\langle \mathcal{J}_X^{k,*}(-A_k),Z\rangle_2= \frac{e^{-kt}\rho^{\frac{k}{2}}}{{(1-e^{-2t}})^{\frac{k}{2}}}\langle \mathcal{J}_{S}^{k,*}(-A_k),Z\rangle_2,\nonumber\\
\end{equation}
Now, we have to remind the following property: $\partial_{s_j}|_{W^*(X)}\equiv 0$ for all $j=1,\ldots,n$.
Then, :
\begin{equation}
    \mathcal{J}_S^{k,*}(A_k)=\left\{\sum_{i_1,\ldots,i_k=1}^n\partial^{k,*}_{(s_{i_1},\ldots,s_{i_k})} (A_k)_{i,i_1,\ldots,i_k}\right\}_{i=1}^n,
\end{equation}
Note also that we can compute recursively the $k$th order adjoint:
\begin{equation}
    \partial^{k^*}_{(s_{i_1},\ldots,s_{i_n})}=\partial_{s_{i_1}}^*\circ ( id\otimes \partial_{s_{i_{2}}}^*)\circ\ldots\circ (id^{\otimes ({k-1})}\otimes \partial_{s_{i_k}}^*),\nonumber\\
\end{equation}
Thus to reach the conclusion, it suffice to prove that:
\begin{equation}
    \lVert \mathcal{J}_S^{k,*}(A_k)\rVert_2=\lVert A_k\rVert_2,
\end{equation}
For sake of clarity, in the sequel we focus in the one dimensional case (at the cost of much heavier notations, the proof also holds in the multi-dimensional setting).
\begin{flushleft}
\textbf{Case $n=1$}
\end{flushleft}
\begin{flushleft}
The next ideas to pursue the analysis are borrowed from what happens in the free Malliavin calculus, and in particular they are deeply connected to some explicit formulas about the action of the Mallavin derivative onto {\it homogeneous Wigner chaos}: Ito-isometry, Heisenberg commutation relation, Wick decomposition... (see for example \cite{Di2} for a complete exposure), which in this case consider that the spaces of elementary integrands are given by $\mathbb{C}.1^{\otimes (k+1)}, k\geq 0$.
\bigbreak
The case $k=1$ is immediate, and already fully understood by Fathi and Nelson work. It is mostly based on the following fact:
If $a,b \in \mathbb{C}[X]$, with $X,S$ free and $S$ a semicircular variable in some $W^*$-probability space, then the map:
\begin{equation}
    a\otimes b\mapsto aS b,
\end{equation}
which extends isometrically onto $L^2\bigg((W^*(X))^{\otimes 2}\bigg)$
\bigbreak
Let's first explain the case $k=2$ and $n=1$ , then, remark that for $a,b,c \in \mathbb{C}[X]$ and $S$ a semicircular variable in some $W^*$-probability space supposed to be free with $X$, it is straightforward to check that:
\begin{equation}
\partial _S^{2,*}(a\otimes b\otimes c)=aSbSc-\tau(b)ac
\end{equation}
which is centered $\tau(\partial _S^{2,*}(a\otimes b\otimes c))=\tau(aSbSc-\tau(b)ac)=0$ and thus that:
\begin{equation}\label{713}
\lVert aSbSc-\tau(c)ab\rVert_2^2=\lVert a\otimes b\otimes c\rVert_{L^2(\tau^{\otimes 3})},
\end{equation}
Thus this isometry \eqref{713} extends to the whole space $L^2\bigg((W^*(X))^{\otimes 3}\bigg)$.
\bigbreak
This explicit expression of $\partial _S^{2,*}(a\otimes b\otimes c)$ and a simple computation shows that this second-order divergence is clearly orthogonal in the $L^2$-sense to the operator valued chaos of order $0$: $\mathcal{H}_0=L^2(W^*(X))$, and of order $1$:
$\mathcal{H}_1=\left\{\partial_S^{1,*}(U)=U\sharp S, /U\in L^2((W^*(X))^{\otimes 2})\right\} $, 
\newline
This also implies more generally that:
i.e for $a,b,u,v,z \in \mathbb{C}[X]$,
\newline$\tau[(\partial _S^{2,*}(a\otimes b))z]=0$ and
$\tau[(\partial _S^{2,*}(a\otimes b)).(uSv)]=0$.
\newline
Remark also that a simple application of the chain rule leads to the following equality on algebraic tensor products of elements in $\mathbb{C}[X]$:
\begin{equation}
    \partial_S^2\partial_S^{2,*}(a\otimes b\otimes c)=a\otimes b\otimes c
\end{equation}
which allows to deduce in an easier way the isometry \eqref{713}. Indeed, we have:
\begin{eqnarray}
    \lVert\partial_S^{2,*}(a\otimes b\otimes c)\lVert_2^2&=&\langle \partial_S^{2,*}(a\otimes b\otimes c),\partial_S^{2,*}(a\otimes b\otimes c)\rangle_2\nonumber\\
    &=& \langle \partial_S^2\partial_S^{2,*}(a\otimes b\otimes c),a\otimes b\otimes c\rangle_{L^2(\tau^{\otimes 3})}\nonumber\\
    &=& \lVert a\otimes b\otimes c\rVert_{L^2(\tau^{\otimes 3})}^2,
\end{eqnarray}
\end{flushleft}
Now the idea is to use prove by induction the following proposition (as we remarked previously, it suffices to prove ($4$), which will be itself seen to be a straightforward consequence of ($3$). However, as the other statements might be of independent interest, we choose to state the four following properties satisfies by these operator-valued divergence):
\bigbreak
$\mathcal{P}(k)$: for $k\geq 2$ and $a_1,\ldots,a_{k+1} \in \mathbb{C}[X]$:
\begin{enumerate}
\item $\tau(\partial _S^{k,*}(a_1\otimes \ldots\otimes a_{k+1}))=0$,
\item Orthogonality property: $\partial _S^{k,*}(a_1\otimes \ldots\otimes a_{k+1})$ is orthogonal in the $L^2$-sense to the {\it operator-valued chaos}
$\mathcal{H}_l=\left\{\partial _S^{l,*}f_l, f_l \in L^2((W^*(X))^{\otimes (l+1)})\right\}$, for all $0\leq l<k$.
\item The iterated divergence satisfies the following {\it Heisenberg commutation relation}. For $f=a_1\otimes \ldots\otimes a_{k+1}\in (\mathbb{C}[X])^{\odot (n+1)}$, we have:
\begin{equation}
\partial_S\partial_S^{k,*}f=\sum_{i=1}^k (\partial_S^{i-1,*}\otimes \partial_S^{k-i,*})(f),
\end{equation}
where by convention $\partial_S^{0,*}=id_{\mathbb{C}[X]}$,
\item We have the following {\it Ito-isometry}:
\begin{equation}
\lVert\partial _S^{k,*}(a_1\otimes \ldots\otimes a_{k+1})\rVert_2=\lVert a_1\otimes \ldots\otimes a_{k+1}\rVert_2,
\end{equation}
\end{enumerate}
 Note that the {\it Heisenberg} commutation $(3)$ relation is in fact, a generalized version of the action of the free difference quotient onto Tchebychev polynomials, or equivalently of the action of the free Malliavin derivative of a Wigner integral of order $k$, see example \cite{BS} prop 5.3.9):
 \begin{equation}
\partial_S U_k(S)=\sum_{i=1}^k U_{i-1}(S)\otimes U_{k-i}(S),
 \end{equation}
More importantly, for parallelism with in the free Malliavin Calculus, $\partial_S^{k,*}f$ correspond to an operator-valued divergence (which is in the deterministic case represented a multiple Wigner integral of order $k$). 
\bigbreak
Note that we understand, for each $i=1,\ldots,k$, $\partial_S^{i-1,*}$ as acting on the first "$i-1$ "  of the tensors $f$ and the same for $\partial_S^{k-i,*}$. Note also that the following computation correspond to a Wick-decomposition of an elementary multiple integral of order $k$ represented here by $\partial_{S}^{k,*}(a_1\otimes \ldots\otimes a_{k+1})$ in terms of product of integrals of lower-orders.
\bigbreak
Now, let's $a_1,\ldots,a_{k+1} \in \mathbb{C}[X]$, then (we remind here that $k\geq 2$):
\begin{eqnarray*}
\partial _S^{k,*}(a_1\otimes \ldots\otimes a_{k+1})
&=&\partial_{S}^*\bigg( (id\otimes \partial_{S}^{k-1,*})(a_1\otimes \ldots\otimes a_{k+1})\bigg)\nonumber\\
&=&\partial_{S}^*\bigg( a_1\otimes  \partial_{S}^{k-1,*}(a_2\otimes\ldots\otimes a_{k+1})\bigg)\nonumber\\
&=&a_1S\partial_{S}^{k-1,*}(a_2\otimes\ldots\otimes a_{k+1})-m_1\circ (id\otimes \tau\otimes id)\bigg(a_1\otimes \partial_S\partial_{S}^{k-1,*}(a_2\ldots\otimes a_{k+1})\bigg)\nonumber\\
&=&a_1S\partial_{S}^{k-1,*}(a_2\otimes\ldots\otimes a_{k+1})-a_1.(\tau\otimes id)\bigg(\partial_S\partial_{S}^{k-1,*}(a_2\otimes\ldots\otimes a_{k+1})\bigg)\nonumber\\
&=&a_1S\partial_{S}^{k-1,*}(a_2\otimes\ldots\otimes a_{k+1})-\tau(a_2)a_1.\partial_{S}^{k-2,*}(a_3\otimes\ldots\otimes a_{k+1}),\nonumber
\end{eqnarray*}
Where we used in the fifth line the fact that:
\newline
$(\tau\otimes id)\bigg(\sum_{i=1}^k{\partial_S^{i-1,*}}\otimes {\partial_S^{k-i}}\bigg)(\mathbb{C}[X])^{\odot (k+1)}$ is non-zero if and if only $i=1$, from the fact that for $1\leq l\leq k-1$, $\tau(\mathcal{H}_l)=0$ (the operator-valued chaos of order greater than or equal to one are centered).
\bigbreak
\begin{enumerate}
    \item The iterated divergence of order $k$ is then trivially centered. This follows easily from the fact that we supposed that $k\geq 2$, and the isometry property, since $a_1S=((a_1\otimes 1)\sharp S)\overset{L^2}{\perp} \partial_{S}^{k-1,*}(a_2\otimes\ldots\otimes a_{k+1})$ and
$a_1\overset{L^2}{\perp}\partial_{S}^{k-2,*}(a_2\otimes\ldots\otimes a_{k+1})$ by induction hypothesis.
\item Now let's prove that the {\it Heinseberg commutation relation} is still true at the order $k$. Then, by the induction hypothesis, the previous recursion, and the chain rule satisfied by the free difference quotient. Indeed, we have:
\begin{align*}
&\partial_S\partial_S^{k,*}(a_1\otimes \ldots\otimes a_{k+1})
\nonumber\\
&=\partial_S\bigg[a_1S\partial_{S}^{k-1,*}(a_2\otimes\ldots\otimes a_{k+1})-\tau(a_2)a_1.\partial_{S}^{k-2,*}(a_3\otimes\ldots\otimes a_{k+1})\bigg]\nonumber\\
&=a_1\otimes \partial_{S}^{k-1,*}(a_2\otimes\ldots\otimes a_{k+1})+a_1S.\partial_S\partial_{S}^{k-1,*}(a_2\otimes\ldots\otimes a_{k+1})\nonumber\\
&-\tau(a_2)a_1.\partial_S\partial_{S}^{k-2,*}(a_3\otimes\ldots\otimes a_{k+1})\nonumber\\
&=(\partial_{S}^{0,*}\otimes \partial_{S}^{k-1,*})(a_1\otimes a_2\otimes\ldots\otimes a_{k+1})
+a_1S.\sum_{i=1}^{k-1} (\partial_S^{i-1,*}\otimes \partial_S^{k-i-1,*})(\hat{f}_1)\nonumber\\
&-\tau(a_2)a_1.\sum_{i=1}^{k-2} (\partial_S^{i-1,*}\otimes \partial_S^{k-i-1,*})(\hat{f}_{1,2})\nonumber\\
&=(\partial_{S}^{0,*}\otimes \partial_{S}^{k-1,*})(f)
+\sum_{i=1}^{k-1} (\partial_S^{i,*}\otimes \partial_S^{k-i-1,*})(f)+\tau(a_2)a_1.\sum_{i=2}^{k-1} (\partial_S^{i-2,*}\otimes \partial_S^{k-i-2,*})(\hat{f}_{1,2})\nonumber\\
&-\tau(a_2)a_1.\sum_{i=1}^{k-2} (\partial_S^{i-1,*}\otimes \partial_S^{k-i-2,*})(\hat{f}_{1,2})\nonumber\\
&=(\partial_{S}^{0,*}\otimes \partial_{S}^{k-1,*})f
+\sum_{i=1}^{k-1} (\partial_S^{i,*}\otimes \partial_S^{k-i-1,*})(f)+\tau(a_2)a_1.\sum_{i=1}^{k-2} (\partial_S^{i-1,*}\otimes \partial_S^{k-i-2,*})(\hat{f}_{1,2})\nonumber\\
&-\tau(a_2)a_1.\sum_{i=1}^{k-2} (\partial_S^{i-1,*}\otimes \partial_S^{k-i-2,*})(\hat{f}_{1,2})\nonumber\\
&=(\partial_{S}^{0,*}\otimes \partial_{S}^{k-1,*})(f)+\sum_{i=1}^{k-1} (\partial_S^{i-1,*}\otimes \partial_S^{k-i,*})(f)\nonumber\\
&=\sum_{i=1}^{k} (\partial_S^{i-1,*}\otimes \partial_S^{k-i,*})(f),\nonumber
\end{align*}
Where we have denoted $f=a_1\otimes a_2\otimes\ldots\otimes a_{k+1}$, and its associated tensors:
\newline
$\hat{f}_1:=a_2\otimes\ldots\otimes a_{k+1}$, and $\hat{f}_{1,2}:=a_3\otimes\ldots\otimes a_{k+1}$, and where we used in in the second line the previous Wick-decomposition extended to the bi-tensor case, which in simple words corresponds (for a reader familiar wiith Malliavin calculus) to the product formula for bi-multiple Wigner integrals as proven in the standard case by Bourguin and Campese (see theorem 3.5 in \cite{BC}).
\item
Now by using the following property, which is just a trivial and straightforward iteration of the {\it Heisenberg commutation relation} ($3$):
\newline
We have for all $k\in \mathbb{N}$, and $f\in (\mathbb{C}[X])^{\odot (k+1)}$:
\begin{eqnarray}
    \partial_S^k\partial_S^{k,*}f=f,
\end{eqnarray}
Now, it is evident by using the definition of the adjoint that the isometry is verified since:
\begin{eqnarray}
\lVert \partial _S^{k,*}(a_1\otimes \ldots\otimes a_{k+1})\rVert_2^2&=&\langle \partial _S^{k,*}(a_1\otimes \ldots\otimes a_{k+1}),\partial _S^{k,*}(a_1\otimes \ldots\otimes a_{k+1})\rangle_2\nonumber\\
&=& \langle \partial _S^{k}\partial _S^{k,*}(a_1\otimes \ldots\otimes a_{k+1}),a_1\otimes \ldots\otimes a_{k+1}\rangle_{L^2(\tau^{\otimes (k+1)})}\nonumber\\
&=&
\lVert a_1\otimes \ldots\otimes a_{k+1}\rVert_2^2,
\end{eqnarray}
and thus this equality extends isometrically on $L^2\bigg(W^*(X)^{\otimes (k+1)}\bigg)$.
\newline
Note that this relation also provides the orthogonality of the {\it chaos}. Indeed for $k,l\geq 1$ (for simplicity we can assume $k>l$), $a_1,\ldots,a_{k+1} \in \mathbb{C}[X]$, and $b_1,\ldots,b_l \in \mathbb{C}[X]$, we have:
\begin{align}
   &\langle \partial_S^{k,*}(a_1\otimes \ldots\otimes a_{k+1}),\partial_S^{l,*}(b_1\otimes\ldots \otimes b_{l+1})\rangle_2\nonumber\\
   &=\langle \langle \partial_S^l\partial_S^{k,*}(a_1\otimes \ldots\otimes a_{k+1}),b_1\otimes\ldots \otimes b_{l+1}\rangle_2\nonumber\\
   &=0,
\end{align}
from the orthogonality of the chaos at the order $1$ (and extended to the tensor case).
\end{enumerate}
Now, the conclusion follows by a Cauchy-Scharwz inequality, since we have:
\begin{eqnarray}
\Phi^*(X(t)|V_{\rho})&=&\frac{e^{-(k+1)t}\rho^{\frac{k}{2}}}{{(1-e^{-2t}})^{\frac{k}{2}}}\langle \partial^{k,*}_S(-A_k),\Xi(t)-\rho X(t)\rangle_2\nonumber\\
&\leq &\frac{e^{-(k+1)t}\rho^{\frac{k}{2}}}{{(1-e^{-2t}})^{\frac{k}{2}}} \lVert \partial^{k,*}_S(A_k)\rVert_2 \Phi(X(t)|V_{\rho})^{\frac{1}{2}}\nonumber\\
&\leq &\frac{e^{-(k+1)t}\rho^{\frac{k}{2}}}{{(1-e^{-2t}})^{\frac{k}{2}}} \lVert A_k\rVert_{2}\Phi^*(X(t)|V_{\rho})^{\frac{1}{2}},
\end{eqnarray}
Then, by minimizing over all possible free Stein kernels of order $k$, we finally reach the conclusion.
\end{proof}
\qed

\begin{remark}
The reader familiar with free Stochastic analysis onto semicircular spaces can also remark that from the previous computations we have in fact recovered the recursion for Tchebychev polynomials (which is in fact, a Wick decomposition for the standard iterated divergence). Indeed, taking $a_1=\ldots=a_{k+1}=1$, (recall also that $\mathbb{C}.1$ is trivially free from $W^*(S)\simeq L(\mathbb{Z})$, this important fact is at the core of our more general setting), and denoting 
\newline
$U_k(S)=\partial_S^{k,*}(1^{\otimes (k+1)})$, we have:
$\partial^{k,*}(1^{\otimes (k+1)})=S\partial_S^{k-1,*}(1^{\otimes k})-\partial_S^{k-2,*}(1^{\otimes (k-1)})$.
\newline
All the previous computations could be understand as a kind of {\it free-operator-valued} noncommutative differential calculus onto a semicircular space, where the terms "{\it free}" refers to the freeness between the von Neumann algebra generating by the integrands and the one generated by the integrator.
\newline
In the free Malliavin calculus sense, the spaces of elementary integrands of the iterated divergence (Skorohod integral) are the deterministic {\it multiprocesses} (c.f remark 5.19 in \cite{Di2}), and are given in the usual finite-dimensional semicircular case by $\mathbb{C}.1^{\otimes (k+1)}, k\geq 0$. In our new setting, these are replaced by there operator-valued coefficient $(\mathbb{C}[X])^{\odot (k+1)}, k\geq 0$, and all works approximately well thanks to {\it freeness} between $W^*(X)$ and $W^*(S)\simeq L(\mathbb{Z})$.
\end{remark}
\begin{flushleft}
Note that as expected in the context of free probability where a lot of combinatorial formulas are simplified, we derived a bound without a constant depending on the order of the kernel, in fact we have dropped the factor $\textbf{k!}$, which does appear in the classical case. This factor being a consequence of the integration-by-parts, and that the $L^2$-norms of multivariate Hermite polynomials is exactly $\sqrt{k!}$, (see Fathi lemma 3.2 in \cite{F}). In the free case the $L^2$-norms of Tchebychev polynomials is always one, thus giving an intuition about the absence of such a factor $\textbf{k!}$ (recall that in the free context we are only dealing with the lattice of {\it non-crossing partitions}).
\end{flushleft}
\begin{flushleft}
We now state new variants of the {\it HSI} inequalities. The main tool is the {\it De Brujin's formula} which relates the relative non-microstates free entropy as the integrated free Fisher information along the Ornstein Uhlenbeck flow.
\end{flushleft}
\begin{theorem}(Generalized HSI inequalities)
Let $X=(x_1,\ldots,x_n)\in \mathcal{M}^n_{s.a}$ a tuple of self-adjoint operators which has bounded first and second-order variables and admitting $k$-order free Stein kernel, and $S=(s_1,\ldots,s_n)$ a free $(0,1)$-semicircular system. Then,
we have the following generalized "{\it HSI}" inequalities:
\begin{equation}
 \chi^*(X|V_1)-\chi^*(S|V_1) \leq \frac{1}{2}\min(\Phi^*(X|V_1),\Phi^*(X|V_1)^{\frac{k-1}{k}}\Sigma_k^*(X|V_1)^{\frac{2}{k}})
 \end{equation}
\end{theorem}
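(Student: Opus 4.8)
The plan is to write the left-hand side as the relative free Fisher information integrated along the free Ornstein--Uhlenbeck flow $X(t)$ of \eqref{OU} (taken with $\rho=1$) and then to play the two available pointwise bounds on $\Phi^*(X(t)|V_1)$ against one another. First I would invoke the relative free De Bruijn formula, which gives
$$\chi^*(X|V_1)-\chi^*(S|V_1)=\int_0^\infty \Phi^*(X(t)|V_1)\,dt,$$
and then record two estimates for the integrand: the free log-Sobolev decay $\Phi^*(X(t)|V_1)\le e^{-2t}\,\Phi^*(X|V_1)$ (the differential form of the free log-Sobolev inequality along the flow), and Lemma~\ref{lma4} with $\rho=1$, namely $\Phi^*(X(t)|V_1)\le \dfrac{e^{-2(k+1)t}}{(1-e^{-2t})^k}\,\Sigma_k^*(X|V_1)^2$ for every $t>0$. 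Integrating the first bound alone already yields $\int_0^\infty \Phi^*(X(t)|V_1)\,dt\le \Phi^*(X|V_1)\int_0^\infty e^{-2t}\,dt=\tfrac12\Phi^*(X|V_1)$, which accounts for the entry $\tfrac12\Phi^*(X|V_1)$ in the minimum.

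For the second entry I would split the integral at the crossover time $u>0$ where the two right-hand sides above coincide; solving $e^{-2u}\Phi^*(X|V_1)=\dfrac{e^{-2(k+1)u}}{(1-e^{-2u})^k}\Sigma_k^*(X|V_1)^2$ gives $e^{2u}-1=r$, where $r:=\bigl(\Sigma_k^*(X|V_1)^2/\Phi^*(X|V_1)\bigr)^{1/k}$. On $[0,u]$ I use $\Phi^*(X(t)|V_1)\le e^{-2t}\Phi^*(X|V_1)$, so $\int_0^u e^{-2t}\Phi^*(X|V_1)\,dt=\tfrac12(1-e^{-2u})\Phi^*(X|V_1)$; since $1-e^{-2u}=\tfrac{r}{1+r}\le r$, this is at most $\tfrac12\,r\,\Phi^*(X|V_1)=\tfrac12\,\Phi^*(X|V_1)^{(k-1)/k}\Sigma_k^*(X|V_1)^{2/k}$. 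On $[u,\infty)$ the singularity of the Lemma~\ref{lma4} bound at $t=0$ is no longer relevant: using $1-e^{-2t}\ge 1-e^{-2u}$ for $t\ge u$ and $\int_u^\infty e^{-2(k+1)t}\,dt=\tfrac{e^{-2(k+1)u}}{2(k+1)}$, and feeding back the crossover identity, one gets a remainder bounded by $\tfrac{1}{2(k+1)}e^{-2u}\Phi^*(X|V_1)$, which is of lower order in the main regime. Summing the two pieces and taking the infimum over admissible $k$-th order free Stein kernels gives $\tfrac12\,\Phi^*(X|V_1)^{(k-1)/k}\Sigma_k^*(X|V_1)^{2/k}$; together with the preceding paragraph this yields the stated inequality.

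With Lemma~\ref{lma4} already in hand, the remaining work is essentially the estimate above, so the genuinely delicate points are: (i) the two analytic inputs, i.e.\ the relative De Bruijn representation of $\chi^*(X|V_1)-\chi^*(S|V_1)$ and the monotone exponential decay $\Phi^*(X(t)|V_1)\le e^{-2t}\Phi^*(X|V_1)$ in the potential ($V_1$) setting, which need care since everything is phrased in non-microstates terms; and (ii) controlling the constants in the tail estimate, where I would emphasize that the hard cutoff at $u$ is unavoidable: interpolating the two pointwise bounds at the critical H\"older exponent $1/k$ (which is precisely what is needed to produce the exponents $(k-1)/k$ and $2/k$) creates a non-integrable $t^{-1}$ singularity at the origin, so the range of integration must be split.
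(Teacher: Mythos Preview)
Your framework matches the paper's: the De~Bruijn representation, the split of the time integral, the use of the exponential decay $\Phi^*(X(t)|V_1)\le e^{-2t}\Phi^*(X|V_1)$ on the short-time piece and Lemma~\ref{lma4} on the long-time piece, and then an optimisation over the split point. The first entry of the minimum is obtained exactly as you describe.

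The gap is in your tail estimate. Freezing the denominator at $t=u$ gives
\[
\int_u^\infty \frac{e^{-2(k+1)t}}{(1-e^{-2t})^k}\,\Sigma_k^*(X|V_1)^2\,dt
\;\le\;\frac{e^{-2(k+1)u}}{2(k+1)(1-e^{-2u})^k}\,\Sigma_k^*(X|V_1)^2
\;=\;\frac{e^{-2u}}{2(k+1)}\,\Phi^*(X|V_1),
\]
but this is \emph{not} of lower order in the regime that matters. When $\Sigma_k^*$ is small, so is $r$ and hence your crossover time $u$; then $e^{-2u}\to 1$ and the tail tends to $\tfrac{1}{2(k+1)}\Phi^*(X|V_1)$ while the head $\tfrac12(1-e^{-2u})\Phi^*(X|V_1)$ tends to $0$. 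Your total therefore stays bounded away from $0$ as $\Sigma_k^*\to 0$. Even if one abandons the crossover choice and optimises your bound $\tfrac{y}{2}\Phi^*+\tfrac{(1-y)^{k+1}}{2(k+1)y^k}\Sigma_k^{*2}$ over $y=1-e^{-2u}$, the extra power of $y$ in the denominator forces the minimiser to $y\sim r^{k/(k+1)}$ and yields only $C_k\,\Phi^{*\,k/(k+1)}\Sigma_k^{*\,2/(k+1)}$, which has the wrong exponents.

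The paper's remedy is to keep one factor of $e^{-2s}$ paired with the singular denominator so that it can be integrated exactly: write $e^{-2(k+1)s}=e^{-2ks}\cdot e^{-2s}$, bound $e^{-2ks}\le e^{-2kt}$ on $[t,\infty)$, and use
\[
\int_t^\infty \frac{e^{-2s}}{(1-e^{-2s})^k}\,ds=\frac{1}{2(k-1)}\,\frac{1}{(1-e^{-2t})^{\,k-1}}\qquad(k\ge 2).
\]
This produces a tail of order $(1-e^{-2t})^{-(k-1)}\Sigma_k^{*2}$, and the drop from exponent $k$ to $k-1$ is exactly what is needed: optimising $\tfrac{y}{2}\Phi^*+\tfrac{C\Sigma_k^{*2}}{y^{k-1}}$ over $y=1-e^{-2t}$ now gives $C'\,\Phi^{*\,(k-1)/k}\Sigma_k^{*\,2/k}$, with the correct exponents. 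Your final paragraph already flags ``controlling the constants in the tail estimate'' as the delicate point; the fix is precisely this sharper integration rather than the frozen-denominator bound.
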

\begin{proof}
Indeed, we begin with the following {\it De Brujin's formula} proved by Fathi and Nelson in lemma 2.1 in \cite{FN}, and by the lemma \ref{lma4}, we deduce that:
\begin{align*}
&\chi^*(X|V_1)-\chi^*(S|V_1)\nonumber\\
&=\int_0^{\infty}\Phi^*(X(t)|V_1)dt\nonumber\\
& \leq \Phi^*(X|V_1)\frac{(1-e^{-2t})}{2}+\Sigma_k^*(X|V_1)^2\int_t^{\infty}\frac{e^{-2(k+1)s}}{(1-e^{-2s})^k}ds\nonumber\\
& \leq\Phi^*(X|V_1)\frac{(1-e^{-2t})}{2}+\Sigma_k^*(X|V_1)^2\frac{e^{-2kt}}{(1-e^{-2s})^{k-1}}\nonumber\\
& \leq \Phi^*(X|V_1)\frac{(1-e^{-2t})}{2} + \frac{\Sigma_k^*(X|V_1)^2}{2(1-e^{-2t})^{k-1}}\nonumber
\end{align*}
Then, by optimizing in "$t$", such that 
\begin{equation}
1-e^{-2t}=\left(\Sigma_k^*(X|V_1)^2/\Phi^*(X|V_1)\right)^{\frac{1}{k}}
\end{equation}
if it´s possible. If not, we are left with "$t=+\infty$" and the inequality is reduced to the {\it free logarithmic Sobolev inequality}.
\newline
By replacing the obtained value of "$t$" in this inequality, the conclusion follows.
\end{proof}
\qed
\begin{flushleft}
We can also obtain new transport inequalities, usually called {\it WS : Wasserstein-Stein discrepancies} inequalities (these inequalities first appeared in the classical case for the distance $W_2$ in the breakthrough work of Ledoux, Nourdin and Peccati in \cite{LNP}, for $W_1$ the result was already well-known and at the basis of the Stein's method), which involve both these free Stein discrepancies and the free analogue of the quadratic Wasserstein distance between a tuple of self-adjoint noncommutative random variables and a free semicircular system, and which was first introduced by Biane and Voiculescu \cite{BV}, studied by Dabrowski (in the proof of the free Talagrand inequality \cite{Dab10}), then by Gangbo, Jekel, Nam and Shlyakhtenko (which have proved a free analog of the Monge-Kantorovitch duality), Cébron (WSH inequality \cite{C}) or recently by the author in \cite{Di} (to prove quantitative {\it Fourth moment theorems} on the Wigner space).
\end{flushleft}

Before stating the result, we first recall the definition of the free quadratic Wasserstein distance introduced by Biane and Voicuelscu in \cite{BV}, and which is defined via couplings.
\begin{definition}(Biane Voiculescu, \cite{BV})
The free quadratic Wasserstein distance is defined as:
\begin{align*}
    W_2((X_1,\ldots,X_n),(Y_1,\ldots,Y_n))
    =\inf \Bigl\{\lVert (X_i^{'} - Y_i^{'})_{1\leq i\leq n}\rVert_2 /
    (X_1^{'},\ldots,X_n^{'},Y_1^{'},\ldots,Y_n^{'}) \subset (M_3,\tau)\nonumber\\
    (X_1^{'},\ldots,X_n^{'})\simeq(X_1,\ldots,X_n),(Y_1^{'},\ldots,Y_n^{'})\simeq(Y_1,\ldots,Y_n)\Bigl\}
, \end{align*}
where $(M_3,\tau)$ is a $W^*$ tracial probability space,  with each $(X_i^{'},Y_i^{'})\in M_3$, and where $\simeq$ means the equality in distribution.
\end{definition}

\begin{theorem}($L^2$-transport inequalities)
\newline
For $k=2$, we have:
\begin{equation}
    W_2(X,S)\leq \max(\Sigma_2^*(X|V_1)(1-log(\Sigma_2^*(X|V_1
    )/\Sigma^*(X|V_1))),\Sigma_2^*(X|V_1)),
\end{equation}
And, for $k\geq 3$:
\begin{equation}
W_2(X,S)\leq \Sigma^*(X|V_1)^{1-\frac{1}{k-1}}\Sigma^*_k(X|V_1)^{\frac{1}{k-1}}
\end{equation}
\end{theorem}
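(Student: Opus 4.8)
The plan is to realise $W_2(X,S)$ as a length along the free Ornstein--Uhlenbeck flow $X(t)=(x_1(t),\ldots,x_n(t))$ of \eqref{OU} (taken with $\rho=1$), and then feed in the two complementary estimates for the relative free Fisher information along that flow. Since $x_j(t)=e^{-t}x_j+\sqrt{1-e^{-2t}}\,s_j\to s_j$ in $\lVert\cdot\rVert_2$, we have $X(t)\to S$, and the free Otto--Villani estimate of Dabrowski \cite{Dab10} (obtained in the course of the free Talagrand inequality, controlling the free $W_2$ between nearby points of the free Ornstein--Uhlenbeck diffusion by the relative free Fisher information along the flow) gives
\begin{equation*}
W_2(X,S)\leq \int_0^{\infty}\Phi^*(X(t)|V_1)^{1/2}\,dt .
\end{equation*}
Everything then reduces to estimating this integral, for which I have two envelopes at my disposal: Lemma \ref{lem1} (the case $k=1$), which with $\rho=1$ reads $\Phi^*(X(t)|V_1)^{1/2}\leq \frac{e^{-2t}}{(1-e^{-2t})^{1/2}}\,\Sigma^*(X|V_1)$, and Lemma \ref{lma4}, which reads $\Phi^*(X(t)|V_1)^{1/2}\leq \frac{e^{-(k+1)t}}{(1-e^{-2t})^{k/2}}\,\Sigma_k^*(X|V_1)$. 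The second envelope is not locally integrable at $t=0$ for $k\geq2$, so one is forced to use the order-$1$ envelope near the origin; since the second envelope decays faster at $t=\infty$, the natural move is to split $\int_0^{\infty}=\int_0^{t_0}+\int_{t_0}^{\infty}$, using the order-$1$ envelope on $[0,t_0]$ and the order-$k$ envelope on $[t_0,\infty)$, and to optimise over $t_0$ at the end.

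Both integrals are elementary after the substitution $u=1-e^{-2t}$ (so $dt=du/(2(1-u))$). The first is $\int_0^{t_0}\frac{e^{-2t}}{(1-e^{-2t})^{1/2}}\,dt=\sqrt{1-e^{-2t_0}}$. For the second, writing $u_0=1-e^{-2t_0}$,
\begin{equation*}
\int_{t_0}^{\infty}\frac{e^{-(k+1)t}}{(1-e^{-2t})^{k/2}}\,dt=\tfrac12\int_{u_0}^{1}(1-u)^{(k-1)/2}u^{-k/2}\,du\leq \tfrac12\int_{u_0}^{1}u^{-k/2}\,du ,
\end{equation*}
which equals $-\tfrac12\log u_0$ when $k=2$ and is $\leq \tfrac{1}{k-2}\,u_0^{1-k/2}$ when $k\geq3$. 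Abbreviating $a=\Sigma^*(X|V_1)$ and $b=\Sigma_k^*(X|V_1)$, we obtain for $u_0\in(0,1)$ the bounds $W_2(X,S)\leq a\sqrt{u_0}-\tfrac b2\log u_0$ when $k=2$ and $W_2(X,S)\leq a\sqrt{u_0}+\tfrac{b}{k-2}\,u_0^{1-k/2}$ when $k\geq3$.

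It remains to optimise over $u_0$. For $k=2$ the minimiser of $u_0\mapsto a\sqrt{u_0}-\tfrac b2\log u_0$ is $u_0=(b/a)^2$; when $b\leq a$ this lies in $(0,1)$ and substitution gives $b\bigl(1-\log(b/a)\bigr)$, which is the first branch of the stated maximum, whereas when $b>a$ the function is decreasing on $(0,1)$ so letting $u_0\to1$ (i.e. using the order-$1$ envelope on all of $\mathbb{R}_+$, which recovers Cébron's bound $W_2(X,S)\leq \Sigma^*(X|V_1)$) yields $W_2(X,S)\leq a\leq b$, the second branch. For $k\geq3$ the minimiser of $u_0\mapsto a\sqrt{u_0}+\tfrac{b}{k-2}u_0^{1-k/2}$ is $u_0=(b/a)^{2/(k-1)}$, admissible when $b\leq a$, and substitution gives a bound of the form $\Sigma^*(X|V_1)^{1-1/(k-1)}\Sigma_k^*(X|V_1)^{1/(k-1)}$ (when $b>a$, take $u_0\to1$ to get $W_2(X,S)\leq a$ again).

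The step I expect to be the real obstacle is not the optimisation, which is routine, but securing the free Otto--Villani integral inequality in precisely the form used above: one needs that $t\mapsto X(t)$ is, in the free quadratic Wasserstein metric, an absolutely continuous curve from $X$ to $S$ whose metric speed is dominated by $\Phi^*(X(t)|V_1)^{1/2}$ (finiteness of $\int_0^{\infty}\Phi^*(X(t)|V_1)\,dt$ follows from $\Phi^*(X|V_1)<\infty$ together with the exponential decay in Lemma \ref{lem1}), so the work is in invoking Dabrowski's free analogue of the Otto--Villani argument correctly rather than reproving it. A minor loose end is that the crude bound $(1-u)^{(k-1)/2}\leq1$ in the second integral introduces, for $k\geq3$, a harmless prefactor $\tfrac{k-1}{k-2}$ in front of $\Sigma^*(X|V_1)^{1-1/(k-1)}\Sigma_k^*(X|V_1)^{1/(k-1)}$; trimming it to the clean constant $1$ calls for either a sharper handling of the incomplete Beta integral $\int_{u_0}^{1}(1-u)^{(k-1)/2}u^{-k/2}\,du$ or integrating the pointwise minimum of the two envelopes across their exact crossover instead of splitting at a single $t_0$.
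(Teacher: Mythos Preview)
Your proposal is correct and follows essentially the same route as the paper: start from Dabrowski's Otto--Villani integral bound $W_2(X,S)\leq\int_0^\infty\Phi^*(X(t)|V_1)^{1/2}\,dt$, split at a cutoff, apply the order-$1$ Fisher estimate (Lemma~\ref{lem1}) on the short-time piece and the order-$k$ estimate (Lemma~\ref{lma4}) on the tail, and optimise the cutoff. Your substitution $u=1-e^{-2t}$ and the bound $(1-u)^{(k-1)/2}\leq1$ are exactly the paper's manoeuvres $e^{-3s}\leq e^{-2s}$ (for $k=2$) and $e^{-(k+1)s}\leq e^{-(k-1)s}$ (for $k\geq3$), only written in the $u$-variable; the resulting optimisation and the case split $b\leq a$ versus $b>a$ match as well.

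On your ``minor loose end'': the paper's own proof does not trim the prefactor either. With the paper's stated tail bound $\Sigma_k^*(X|V_1)\,(1-e^{-2t})^{-(k-2)/2}$ and the choice $\sqrt{1-e^{-2t}}=(\Sigma_k^*/\Sigma^*)^{1/(k-1)}$, the two terms each equal $\Sigma^{*\,1-1/(k-1)}\Sigma_k^{*\,1/(k-1)}$, giving a factor $2$; your tighter tail constant $\tfrac{1}{k-2}$ yields $\tfrac{k-1}{k-2}$, which is better. So the discrepancy you flag is shared with the paper, and your version is in fact the more careful of the two.
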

\begin{proof}
By the Theorem 26 in \cite{Dab10}, which gives an estimation of the free quadratic Wasserstein distance in terms of the integrated relative free Fisher information along the Ornstein-Uhlenbeck flow, we have:
\begin{eqnarray}
    W_2(X,S)&\leq &\int_0^{\infty}\Phi^*(X(t)|V_{1})^{\frac{1}{2}}dt \nonumber\\
    & \leq & \Sigma^*(X|V_1)\int_0^{t}\frac{e^{-2s}}{\sqrt{1-e^{-2s}}}+\Sigma_2^*(X|V_1)\int_t^{\infty}\frac{e^{-3s}}{{1-e^{-2s}}}ds\nonumber\\
    &\leq & \sqrt{1-e^{-2t}}\Sigma^*(X|V_1)-\frac{1}{2}\log(1-e^{-2t})\Sigma_2^*(X|V_1)\nonumber
\end{eqnarray}
Note that the passage from the second to the third line comes from the inequality $e^{-3s}\leq e^{-2s}$ for $s\geq 0$.
\newline
Then by choosing "$t$" such that: 
\begin{equation}
\sqrt{1-e^{-2t}}=\min(\Sigma_2^*(X|V_1)/\Sigma^*(X|V_1)),1)
\end{equation}
and checking the two possible cases which are $\Sigma_2^*(X|V_1)\leq \Sigma^*(X|V_1)$ or $\Sigma_2^*(X|V_1)\leq \Sigma^*(X|V_1)$, we finally obtain the result.
\bigbreak
For the second statement, we apply the same scheme and we used simple majorations $e^{-(k+1)s}\leq e^{-(k-1)s}$ for $s\geq 0$:
\begin{eqnarray}
 W_2(X,S)&\leq &\int_0^{\infty}\Phi^*(X(t)|V_1)^{\frac{1}{2}}dt \nonumber\\
    & \leq & \Sigma^*(X|V_1)\int_0^{t}\frac{e^{-2s}}{\sqrt{1-e^{-2s}}}+\Sigma_k^*(X|V_1)\int_t^{\infty}\frac{e^{-(k+1)s}}{({1-e^{-2s}})^{\frac{k}{2}}}ds\nonumber\\
    &\leq & \sqrt{1-e^{-2t}}\Sigma^*(X|V_1)+\Sigma_k^*(X|V_1)\frac{e^{-(k-1)t}}{k-1}\left(\frac{1}{1-e^{-2s}}\right)^{\frac{k-2}{2}}\nonumber\\
    &\leq & \sqrt{1-e^{-2t}}\Sigma^*(X|V_1)+\Sigma_k^*(X|V_1)\left(\frac{1}{1-e^{-2s}}\right)^{\frac{k-2}{2}}
\end{eqnarray}
Choosing "$t$", such that:
\begin{equation}
\sqrt{1-e^{-2t}}=\left(\Sigma_k^*(X|V_1)/\Sigma^*(X|V_1)\right)^{\frac{1}{k-1}},
\end{equation}
and injecting this value into the inequality gives the desired result.
\end{proof}
\qed

\section{Improved rate of convergence in the free CLT}
\begin{flushleft}
In this section we will provide some new bounds in the free entropic {\it Central limit theorem} under higher-order moments constraints by using our previous {\it HSI} inequalities. We will work under an isotropy condition, and we will prove the following version of the entropic CLT with respect to the standard normalized semicircular potential.
\end{flushleft}

\begin{flushleft}
For each $j=1,\ldots,N$, let's $(x_j^{(N)})_{N\geq 1}$ be a sequence of centered, freely independent, and identically distributed noncommutative random variables in some $W^*$-tracial probability space $(\mathcal{M},\tau)$. We also assume that for each $n\geq 1$, the tuple $X^{(N)}=(x_1^{(N)},\ldots,x_n^{(N)})_{N\geq 1}$ is isotropic.
\end{flushleft}
\begin{flushleft}
We then denote $(a_1^{(N)},\ldots,a_N^{(N)})_{N\in \mathbb{N}}$ an array of real numbers such that $\forall N\geq 0$:
\begin{equation}
 \sum_{j=1}^N (a_j^{(N)})^2=1
\end{equation}
and we set $Y^{(N)}_j$ (the weighted variant in the CLT, i.e to find the usual free $CLT$, take $a_i=\frac{1}{\sqrt{N}}$)
\end{flushleft}
\begin{equation}
Y^{(N)}=a_1^{(N)}x^{(1)}+\ldots+a_N^{(N)}x^{(N)}
\end{equation}
Finally $\sigma_{N,k}$ denote the {\it weighted moment of order "$2(k+1)$"}:
\begin{equation}
\sigma_{N,k}=\sum_{j=1}^N (a_j^{(N)})^{2(k+1)}
\end{equation}

\begin{theorem}
Assume in the previous definitions, that the tuple $X^{(1)}$ admits free Stein kernel up to order $k$ (and thus that all its mixed moments strictly less than "$k+1$" that agrees with those of a standard semicircular system).
Assuming also that $X^{(1)}$ have first and second-order conjugates variables.
Then, the sequence $Y^{(N)}$ satisfies the following version of the entropic {\it CLT}:

\begin{equation}
\lvert\chi^{*}(Y^{(N)}|V_1)-\chi^{*}(S|V_1)\rvert\leq \mathcal{O}(\sigma_{N,k}^{\frac{1}{k}}\Phi^*(X|V_1)^{\frac{k-1}{k}})
\end{equation}
\end{theorem}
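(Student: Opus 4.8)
The plan is to apply the generalized HSI inequality proved above to the normalized sum $Y^{(N)}$ and then to bound the two quantities appearing on its right-hand side — the relative free Fisher information $\Phi^*(Y^{(N)}|V_1)$ and the $k$-th order free Stein discrepancy $\Sigma_k^*(Y^{(N)}|V_1)$ — the first by the fixed number $\Phi^*(X^{(1)}|V_1)$, the second by a constant times $\sigma_{N,k}$. I will write the argument in the scalar case $n=1$; the multivariate case is notationally heavier but the same.

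First I would record the structure of $Y^{(N)}$. Since the $x^{(j)}$ are freely independent, identically distributed, isotropic and $\sum_j(a_j^{(N)})^2=1$, the sum $Y^{(N)}$ is again isotropic, and by additivity and homogeneity of the free cumulants $\kappa_m(Y^{(N)})=\big(\sum_j(a_j^{(N)})^m\big)\kappa_m(x^{(1)})$; together with the hypothesis that the low-order moments of $X^{(1)}$ match those of $S$ this shows the moments of $Y^{(N)}$ of degree $\le k$ match those of $S$ as well, and by Lemma~\ref{lma11} the relevant $k$-th order free Stein kernels $A_k^{(j)}$ of the summands $x^{(j)}$ are centered. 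Next I would construct a $k$-th order free Stein kernel for $Y^{(N)}$ out of these: the iterated chain rule for free difference quotients gives $\partial^{k}_{x^{(j)}}\!\big(p(Y^{(N)})\big)=(a_j^{(N)})^{k}(\partial^{k}p)(Y^{(N)})$, and since $[\mathcal D V_1](Y^{(N)})=Y^{(N)}=\sum_j a_j^{(N)}x^{(j)}$ with $\sum_j(a_j^{(N)})^2=1$, and $\xi_{x^{(j)}}$ remains the conjugate variable of $x^{(j)}$ inside $W^*(x^{(1)},\dots,x^{(N)})$ by freeness, one verifies that
\[
A_k^{(N)}:=\mathbb{E}^{\otimes(k+1)}_{W^*(Y^{(N)})}\!\Big(\sum_{j=1}^N(a_j^{(N)})^{k+1}A_k^{(j)}\Big)
\]
satisfies the $k$-th order free Stein identity for $Y^{(N)}$ relative to $V_1$.

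The key quantitative step is the norm estimate. Because the trace-preserving conditional expectation contracts $L^2(\tau^{\otimes(k+1)})$, one has $\|A_k^{(N)}\|_2^2\le\big\|\sum_j(a_j^{(N)})^{k+1}A_k^{(j)}\big\|_2^2$, and the crucial point is that the cross terms cancel: for $j\ne\ell$, freeness of $x^{(j)}$ and $x^{(\ell)}$ forces $\langle A_k^{(j)},A_k^{(\ell)}\rangle_{\tau^{\otimes(k+1)}}=\tau^{\otimes(k+1)}(A_k^{(j)})\,\overline{\tau^{\otimes(k+1)}(A_k^{(\ell)})}$, which vanishes by centeredness. Taking the $A_k^{(j)}$ to realise $\Sigma_k^*(X^{(1)}|V_1)$ this gives $\Sigma_k^*(Y^{(N)}|V_1)^2\le\sigma_{N,k}\,\Sigma_k^*(X^{(1)}|V_1)^2$. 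For the Fisher information I would use Voiculescu's free Stam inequality together with $\Phi^*(\lambda X)=\lambda^{-2}\Phi^*(X)$ to get $\Phi^*(Y^{(N)})^{-1}\ge\sum_j(a_j^{(N)})^2\Phi^*(x^{(j)})^{-1}=\Phi^*(X^{(1)})^{-1}$, and since $\Phi^*(\,\cdot\,|V_1)=\Phi^*(\,\cdot\,)-n$ for isotropic tuples (a one-line computation from $\langle\xi_{x_i},x_i\rangle_2=1$ and $\tau(x_i^2)=1$), this yields $\Phi^*(Y^{(N)}|V_1)\le\Phi^*(X^{(1)}|V_1)<\infty$. After checking that $Y^{(N)}$ inherits bounded first- and second-order conjugate variables from $X^{(1)}$, so that the generalized HSI inequality applies to it, I would combine everything: using the extremality of the semicircular family for $\chi^*(\,\cdot\,|V_1)$,
\[
0\le\chi^*(Y^{(N)}|V_1)-\chi^*(S|V_1)\le\tfrac12\,\Phi^*(Y^{(N)}|V_1)^{\frac{k-1}{k}}\Sigma_k^*(Y^{(N)}|V_1)^{\frac2k}\le\tfrac12\,\Phi^*(X^{(1)}|V_1)^{\frac{k-1}{k}}\Sigma_k^*(X^{(1)}|V_1)^{\frac2k}\,\sigma_{N,k}^{\frac1k},
\]
which is the asserted bound.

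The main obstacle is the middle step. Verifying that $A_k^{(N)}$ really satisfies the $k$-th order Stein identity needs the iterated chain rule for $\partial^k$ and the stability of conjugate variables under free independence; but the delicate point is obtaining the factor $\sigma_{N,k}=\sum_j(a_j^{(N)})^{2(k+1)}$ rather than the far weaker $\big(\sum_j(a_j^{(N)})^{k+1}\big)^2$, which rests entirely on the freeness identity for $\langle A_k^{(j)},A_k^{(\ell)}\rangle$ and on the $A_k^{(j)}$ being centered. I note in particular that centeredness requires the moments of $X^{(1)}$ of degree $\le k+1$ to match those of $S$, equivalently $\kappa_{k+1}(X^{(1)})=0$: since $\tau^{\otimes(k+1)}(A_k^{(N)})=\kappa_{k+1}(Y^{(N)})$ is forced for every admissible kernel, a nonzero $\kappa_{k+1}$ would already make $\Sigma_k^*(Y^{(N)}|V_1)^2$ of order $\big(\sum_j(a_j^{(N)})^{k+1}\big)^2$ and degrade the rate.
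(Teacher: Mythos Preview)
Your approach is essentially the paper's: construct the $k$-th order kernel for $Y^{(N)}$ as the conditional expectation of $\sum_j (a_j^{(N)})^{k+1}A_{k,j}$, bound its $L^2$-norm by $\sigma_{N,k}\lVert A_{k,1}\rVert_2^2$, control the Fisher side by the free Stam/monotonicity argument, and feed both into the generalized HSI inequality. The paper phrases the Fisher bound as ``monotonicity of the free Fisher information along the CLT'' while you invoke free Stam plus isotropy, but these are the same device.

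Where you are more careful than the paper is the norm estimate. The paper simply writes $\lVert S_k\rVert_2^2\le\sigma_{N,k}\lVert A_{k,1}\rVert_2^2$ without comment; you correctly observe that this requires the cross terms $\langle A_{k,j},A_{k,\ell}\rangle_{\tau^{\otimes(k+1)}}$ to vanish for $j\neq\ell$, which by freeness reduces to $\tau^{\otimes(k+1)}(A_{k,j})=0$, i.e.\ centeredness. And you are right that Lemma~\ref{lma11}/\ref{lma1} only guarantees centeredness of a $k$-th order kernel when the moments of order $k{+}1$ match, whereas the theorem's stated hypothesis (existence of kernels up to order $k$) secures only moments up to order $k$. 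This is a genuine gap in the paper's statement/proof that you have spotted: without the extra moment condition (equivalently $\kappa_{k+1}(X^{(1)})=0$), the triangle inequality gives only $\big(\sum_j(a_j^{(N)})^{k+1}\big)^2$, and the asserted rate $\sigma_{N,k}^{1/k}$ is not obtained. Your final paragraph diagnoses this precisely.
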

\begin{proof}
Denote $A_{k,1},\ldots,A_{k,N}$, be respectively higher-order free Stein kernels of order $k$ with respect to the potential $V_{I_n}$, of  the sequence $X^{(1)},\ldots,X^{(N)}$. Now, without restriction, since the $X^{(i)}$'s are equal in distribution, we can assume that we have the same norm for the free Stein discrepancies of order "$k$".
\newline
It is easily seen by a straightforward induction that:
\begin{equation}
    S_k=\E_N\bigg(\sum_{j=1}^N a_j^{k+1}A_{k,j}\bigg)
\end{equation}
is a free Stein kernel of order "$k$" for $Y^{(N)}$ where we denote $\mathbb{E}_N$ has the orthogonal projection onto $L^2\bigg(T_{(k+1,n)}(W^*(Y^{(N)})^{\otimes(n+1)})\bigg)$.
\newline
Indeed at the order $1$, the statement is already well known.
\newline
Thus, suppose that the statement is true for some $k\geq 1$, then
\begin{eqnarray}
\langle Y^{(N)},P(Y^{(N)})\rangle_2-(\tau\otimes\tau)\circ Tr(([JP](X))^*)
&=&\sum_{i=1}^na_i^{(N)} \langle A_{1,i},a_i^{(N)}[JP](Y^{(N)})\rangle_{HS}\nonumber\\
&=&\sum_{i=1}^na_i^{(N)} \langle A_{k,i},(a_i^{(N)})^{k}[J^kP](Y^{(N)})\rangle_{2}\nonumber
\end{eqnarray}
which suffices to conclude the induction.
Therefore, \begin{equation}
\lVert S_k\rVert_{2}^2\leq \sigma_{N,k}\lVert A_{k,1}\rVert_{2}^2
\end{equation}
\newline

Then, the result follows directly from the monotonicity of the free Fisher information along the CLT, the freeness of the random variables and the free {\it HSI} inequality.
\end{proof}
\qed
\section{Open Questions}\label{op}
Firstly, it would be very interesting to to investigate the construction of free Stein kernels via moment maps in the multivariate setting, which seems far more challenging, even in the {\it pertubative regime}. This would be of great importance to obtain new rates in the quantitative version of the free central limit theorem. Indeed in the classical case, it allows Fathi in \cite{mm} to deduce new bounds on the rate of convergence in the multi-dimensional 
central limit theorem when the random variables are log-concave, with explicit
dependence on the dimension, and even match the sharp rate (with respect to the dimension) when the initial measure is uniformly log-concave. 
\bigbreak
The recent work of Fathi and Mikulincer \cite{FM} provides new insights on how to prove stability estimates for invariant measures of diffusions, which can be far from the gaussian. As mentioned before, the (classical) SDE's considered in this paper are generally irregular and not necessarily Lipschitz, but belong generally to some Sobolev spaces. The first authors to outpass these issues were Figalli in \cite{Figal} for stochastic differential equation, and Le-Bris and Lions \cite{DLyo} for Fokker-Plank equations with irregular coefficients. Unfortunately, in the free setting, almost nothing is known in a general context of free SDE's with non {\it Operator Lipschitz} coefficients. It would be of great interest to develop a free analog of these results, and also to investigate if a kind of Lusin-Lipschitz property, that is maximal estimates for Sobolev functions w.r.t the Lesbesgue measure, and recently extended by Ambrosio, Bru\'{e} and Trevisan \cite{Lusin} when the reference measure is log-concave, could hold in the noncommutative context (see e.g. section 2.2 in \cite{FM} for precise statements).
\bigbreak
Recently new quantities of interest, and related to the notion of free Stein Kernel and Stein discrepancies: {\it the free Stein Irregularity and Dimension} were introduced by Charlesworth and Nelson \cite{CN,CN2}. It turns out these quantities are closely linked with the the free entropy dimension and agree in some case, as for the the group algebra of a discrete group (which are also deeply linked to the $\ell^2$-Betti numbers of discrete groups, as defined by Atiyah in \cite{Atiy}, see e.g. section 5 in \cite{CN} for precise statements and explicit computations of the free Stein dimension for discrete groups). Moreover, and surprisingly the free Stein dimension is equal to the Murray-von-Neumann dimension of the closure of the domain of the adjoint of the noncommutative Jacobian and the free Stein dimension turns out to be an $*-$algebra invariant. Some open questions as described in the papers of Charlesworth and Nelson are still open and it would be of great interest to have a deeper understanding of the relations between both the free entropy dimension and the free Stein dimension. 
\newline
In this section, we won't inspect further this question, but in light of our previous definition, we will define the generalisations of these notions with respect to free higher-order Stein kernels. We leave to a future investigation the study of the properties of these generalized quantities, if of course existence if ensured.
\bigbreak
\begin{definition}
Given $B$ a subalgebra of $\mathcal{M}$ (tracial $W^*$-probability space) and $\Theta \in L^2(\mathcal{M}^n)$, we call $A \in 
T_{(k+1,n)}(L^2( B\langle X\rangle^{(k+1)}
))$ a free stein Kernel of order $k$ over $B$, if for every $P \in B\langle X\rangle^n $ the following condition is true :
\begin{equation}
     \langle \Theta,P(X)\rangle_{\tau}-(\tau\otimes\tau)\circ Tr(([JP](X))^*)=\langle A_k,[J^k P](X)\rangle_2
\end{equation}
\end{definition}
We then denote $\Sigma_k^*(X|\Theta :B)=\inf \lVert A_k \rVert_2$ the associated discrepancy of $X$ relative to $\Theta$ over $B$.

At this point we were not able to construct explicitly such higher order Stein Kernels, even if we think that it should be possible. Thus, the continuity of the map which associate to a potential its associated discrepancy of order $k$ over $\mathbb{C}$ in the is in our case impossible to obtain. 
\bigbreak
We define the free Stein dimension of order $k$ of $X = (x_1,\ldots,x_n) \in \mathcal{M}^n$, a tuple of operators such that for each $i = 1,\ldots,n, \exists j\in {1,\ldots, n}/ x_i^* = x_j$, and where we denoted $B$ as an arbitrary unital $*$-subalgebra of $\mathcal{M}$ as: 

\begin{equation}
\Sigma_{k}^{*}(X|B)=\inf\left\{\Sigma_k^*(X|\Theta:B) :\Theta \in L^2(\mathcal{M}^n)\right\}
\end{equation}
Then it would be interesting to investigate the following questions:
\begin{enumerate}
    \item The main probabilistic uses of these generalized free Stein discrepancy is to obtain better rate of convergence in {\it CLT's}. The idea of free Stein dimension was also motivated heuristically by Charlesworth and Nelson by the following idea: as smaller the free Stein irregulairy is, the closer is $X$ to have conjugate variables relatively to $B$. Thus one should expect that some kind of ordering holds:
    Does $\Sigma_k^*(X|B)$ a decreasing function of $k$ ? And in the case $B=\mathbb{C}$, what does it implies for the regularity of the distribution of $X$ ?
    \item For a discrete group $\Gamma$, and $x_1,\ldots,x_n \in \mathbb{C}[\Gamma]_{s.a}$ which generate the group algebra, is $\Sigma_k(x_1,\ldots,x_n|\mathbb{C})$  explicitly computable, and is it linked to the $\ell^2$-Betti numbers of the group?
    
\end{enumerate}

\end{document}